\documentclass[fleqn]{article}

\usepackage{amsmath,amssymb,amsthm,tikz,enumerate,stmaryrd}
\usepackage[f]{esvect}

\title{Certified $\Sigma_1$-sentences}
\author{Taishi Kurahashi\footnote{Email: kurahashi@people.kobe-u.ac.jp}
\footnote{Graduate School of System Informatics, Kobe University, Japan.} \ 
and Albert Visser\footnote{Email: a.visser@uu.nl}\ \footnote{Philosophy, Faculty of Humanities, Utrecht University, The Netherlands} 
}
\date{}

\theoremstyle{plain}
\newtheorem{thm}{Theorem}[section]
\newtheorem{lem}[thm]{Lemma}
\newtheorem{prop}[thm]{Proposition}
\newtheorem{cor}[thm]{Corollary}

\theoremstyle{definition}
\newtheorem{defn}[thm]{Definition}
\newtheorem{ex}[thm]{Example}
\newtheorem{rem}[thm]{Remark}
\newtheorem{prob}[thm]{Problem}

\let\conTheta\Theta

\renewcommand{\Theta}{\varTheta}
\renewcommand{\Phi}{\varPhi}
\renewcommand{\Psi}{\varPsi}
\renewcommand{\Xi}{\varXi}
\renewcommand{\Omega}{\varOmega}
\renewcommand{\Gamma}{\varGamma}

\newcommand{\PA}{\mathsf{PA}}
\newcommand{\PR}{\mathsf{Pr}}

\newcommand{\gn}[1]{\ulcorner{#1}\urcorner}
\newcommand{\sgn}[1]{\lceil{#1}\rceil}

\newcommand{\num}[1]{\underline{#1}}
\newcommand{\TR}{\vartriangleright}
\newcommand{\NTR}{\ntriangleright}
\newcommand{\mc}[1]{\mathcal {#1}}
\newcommand{\W}[1]{\mathsf{W}_{#1}}

\newcommand{\N}{\mathbb{N}}

\newcommand{\sq}{\mathsf{cert}}
\newcommand{\RR}{\mathsf{R}}
\newcommand{\LA}{\mathbb{L}_{\sf a}}
\newcommand{\LAP}{\mathbb{L}_{\sf ap}}
\newcommand{\RP}{{\sf R}_{0{\sf p}}}

\newcommand{\s}[1]{\mathsf{s}\hspace*{0.02cm} #1}
\newcommand{\sbr}[1]{\mathsf{s}\hspace*{0.01cm}(#1)}
\newcommand{\Ax}[1]{{\sf A{#1}}}
\newcommand{\verz}[1]{\{ #1 \}}

\renewcommand{\iff}{\leftrightarrow}
\newcommand{\grullet}{{\textcolor{gray}{\ensuremath{\bullet}}}}

\newcommand{\certh}{Certified Extension Theorem}
\newcommand{\cert}{Certified Extension}
\newcommand{\virtu}{certified}
\newcommand{\virt}{{\sf cert}}
\newcommand{\Vness}{Certification}
\newcommand{\vness}{certification}
\newcommand{\idax}[1]{\mathfrak{id}_{#1}}
\newcommand{\idap}{\idax{\sf ap}}
\newcommand{\gener}{$\RP$-sourced}
\newcommand{\certo}[1]{{[ #1]}}
\newcommand{\certt}[1]{{\llbracket #1 \rrbracket}}
\newcommand{\virtob}{{\mathfrak v}}
\newcommand{\mf}[1]{{\mathfrak {#1}}}
 \newcommand{\nrhd}{\mathrel{\not\! \rhd}}
 \renewcommand{\leq}{\leqslant}
 \newcommand{\bles}{\mathbin{<}}
\newcommand{\bleq}{\mathbin{\leq}}

\newcommand{\sline}{\raise-0.3ex\hbox{$\hbox{--}\kern-0.84ex\raise0.45ex\hbox{$\hbox{\scalebox{0.3}{\bf /}}\kern-0.37ex\hbox{\scalebox{0.3}{\bf /}}$}$}}
\newcommand{\slinei}{\raise-0.3ex\hbox{$\hbox{--}\kern-0.84ex\raise0.45ex\hbox{$\hbox{\scalebox{0.3}
{\bf \textbackslash}}\kern-0.37ex\hbox{\scalebox{0.3}{\bf \textbackslash}}$}$}}
\newcommand{\jumpneq}{\mathrel{\jump_{\hspace*{-0.235cm}{}_{\kern0.08ex \sline}}\hspace*{0.09cm}}}
\newcommand{\jumpbneq}{\mathrel{\jumpb_{\hspace*{-0.235cm}{}_{\kern0.08ex \slinei}}\hspace*{0.09cm}}}
\newcommand{\dashvneq}{\mathrel{\dashv_{\hspace*{-0.235cm}{}_{\kern0.2ex \slinei}}\hspace*{0.09cm}}}
\newcommand{\vdashneq}{\mathrel{\vdash_{\hspace*{-0.235cm}{}_{\kern0.3ex \sline}}\hspace*{0.09cm}}}
\newcommand{\lhdnneq}{\mathrel{\lhd_{\hspace*{-0.27cm}{}_{\kern0.2ex \slinei}}\hspace*{0.09cm}}}
\newcommand{\rhdnneq}{\mathrel{\rhd_{\hspace*{-0.27cm}{}_{\kern0.3ex \sline}}\hspace*{0.09cm}}}
\newcommand{\cothe}[1]{{\sf coTh}_{#1}}

\definecolor{uured}{cmyk}{0.2,1,0.9,0.1}
\definecolor{uublue}  {cmyk}{0.9,0.55,0,0}
\definecolor{uugreen}{cmyk}{1,0,0.75,0}
\definecolor{bazaar}{rgb}{0.6, 0.47, 0.48}

\begin{document}

\maketitle

\begin{abstract}
In this paper, we study the employment of $\Sigma_1$-sentences with certificates, i.e., $\Sigma_1$-sentences where
a number of principles is added to ensure that the witness is sufficiently number-like. We develop certificates
in some detail and illustrate their use by reproving some classical results and proving some new ones. 
An example of such a classical result is Vaught's theorem of the strong effective inseparability of $\RR_0$. 

We also develop the new idea of a theory being \emph{${\sf R}_{0{\sf p}}$-sourced}. Using this notion, we can  transfer a number of
salient results from $\RR_0$ to a variety of other theories.
\end{abstract}

\section{Introduction}
In this paper, we study \emph{certificates}. These are \emph{theories-of-a-number} with a free parameter 
for the number in question, or, more precisely, for the \emph{number-like object}.
In other words, certificates specify a property of a number. This property is roughly that the object specified is sufficiently like a number.
A salient property of theories-of-a-number is that they have finite models.

The main focus of this paper is on certificates as a tool to metamathematical results.
Thus, the paper can be viewed as a study of certificates as a method.
We develop one specific  certificate and provide the necessary lemmas for its employment. We apply the
certificate to, possibly non-standard, witnesses of $\Sigma^0_1$-sentences. This use of the
certificate is in constant interaction with the salient theories $\RR_0$ and $\RR$. Our presentation provides more detail than
previous presentations, so that many subtleties of what is going on become clearly visible here for the first time.

We  extend the classical results obtained by the use of certificates by defining a wider class of theories, the
\emph{$\RR_{0{\sf p}}$-sourced theories}. These theories behave in some important respects like the salient theory $\RR_0$.

The paper presents a number of applications of the use of certificates, which are important in themselves, but also serve to
illustrate the use of the method well. These are:
\begin{itemize}
    \item 
    Certain theorems by Cobham and Vaught, the contents of which are explained in Section~\ref{cova} below. We introduce these results in our
    preparatory Section~\ref{cova}. The detailed treatment then will be
     in Section~\ref{vaughtagain}. Our version of the second Vaught theorem is a generalization to the $\RR_{0{\sf p}}$-sourced
    case.
    \item 
    A variety of results concerning degree structures of interpretability. These results are in Section~\ref{degrees}.
\end{itemize}

Theories-of-a-number have counterparts for various other data-types, like sets, multi-sets, sequences, and strings.
The alternative that is developed and used is theories-of-a-finite-set. 
Here is a list of 
examples of uses of theories-of-a-number and their kin that we noted. We do not have  any pretense of completeness here.
\begin{itemize}
\item 
In \cite{frie:inte07}, Harvey Friedman uses theories-of-a-number to prove the density of the interpretability degrees of finitely 
axiomatized theories. This result was proved earlier, by another method, in \cite{myci:latt90}. We present a version of the result
in Theorem~\ref{App4} and Corollary~\ref{locdens}.
\item 
In \cite{viss:vaug12}, Albert Visser uses theories-of-a-number to reprove (and improve) Vaught's result \cite{vaug:axio67} that every c.e.~Vaught theory is 
axiomatizable by a scheme.
    \item 
    In \cite{pakh:weak19}, Fedor Pakhomov uses
    theories-of-a-finite-set to construct an $\RR$-like set theory that proves its own consistency.
    \item 
    In \cite{pakh:ques19}, Fedor Pakhomov and Albert Visser show the following. Consider a finitely axiomatised extension $A$ of c.e.~theory $U$ in a possibly extended signature.
Suppose $A$ is conservative over $U$. Then, there is a conservative extension $B$ of $U$ in the signature of $A$, such that $A\vdash B$
and $B \nvdash A$. They use theories-of-a-finite-set to prove this result.
\end{itemize}

\subsection{Plan of the Paper}
The plan of the paper is as follows. In Section~\ref{cova}, we give a first presentation of both certain results by
Cobham and Vaught and a preliminary explanation of the
use of certificates. Section~\ref{bade} fixes some basic definitions and provides pointers to relevant literature. 
In Section~\ref{ce}, we develop the basic facts about certificates of a $\Sigma^0_1$-witness and the theories 
$\RR_0$ and $\RR$. In Section~\ref{hoempapasmurf}, we generalize the results of Section~\ref{ce} by replacing $\RR_0$ by
theories from a class that has $\RR_0$ as its source. We provide examples to illustrate that many salient theories are
in that class. Section~\ref{wicomp} provides basic facts about witness comparison, which is an important tool that we use
in the subsequent sections. The section is needed since the interaction between certificates and witness comparisons is
somewhat delicate. In Section~\ref{vaughtagain}, we apply the methods developed in the previous sections to prove
two theorems due to Vaught. Finally, in Section~\ref{degrees}, we apply these methods to prove various results about degrees
of interpretability. Sections~\ref{vaughtagain} and \ref{degrees} can be read independently of each other.

\subsection{History of the Paper}
The present paper succeeds and replaces Taishi Kurahashi's earlier preprint \emph{Incompleteness and undecidability of theories consistent with $\RR$}.
The materials of the preprint are contained in the present paper.

\section{Theorems of Cobham and Vaught}\label{cova}

In this section, we give a presentation of current status of certain theorems of Cobham and Vaught.
After some preparatory work, we take up this thread again in Section~\ref{vaughtagain}.

Let $\LA$ be the signature $\{0, \s, +, \times, \leq\}$ of first-order arithmetic. 
Let $\omega$ denote the set of all natural numbers. 
For each $n \in \omega$, the numeral $\s{\cdots \s{0}}$ ($n$ applications of $\s{}$) for $n$ is denoted by $\num{n}$. 
A central object of study in this paper is the $\LA$-theory $\RR$ introduced by Tarski, Mostowski and Robinson in \cite{TMR53}. 

\begin{defn}[The theory $\RR$]
The theory $\RR$ is axiomatized by the following sentences: For $m, n \in \omega$, 
\begin{enumerate}[{\sf R}1.]
	\item  $\num{m} + \num{n} = \num{m + n}$
	\item  $\num{m} \times \num{n} = \num{m \times n}$
	\item  $\num{m} \neq \num{n}$ (if $m \neq n$)
	\item  $\forall x\, \bigl(x \bleq \num{n} \to \bigvee_{i \bleq n} x = \num{i}\bigr)$
	\item  $\forall x\, (x \leq \num{n} \lor \num{n} \leq x)$
\end{enumerate}
\end{defn}

It was proved in \cite{TMR53} that the original $\RR$ is essentially undecidable, that is, every consistent extension of $\RR$ is undecidable. 
Here, we comment on the difference between our formulation and the original one of the theory $\RR$. 
The signature adopted in \cite{TMR53} does not contain the symbol $\leq$ and the formula $x \leq y$ is introduced as the abbreviation for $\exists z \; z + x = y$. 
Our signature $\LA$ contains $\leq$ as a primitive symbol and our version of 
$\RR$ does not prove the equivalence between $x \leq y$ and $\exists z \; z + x = y$. 
So, our $\RR$ is strictly weaker than the original. 
Jones and Shepherdson \cite{JS83} pointed out that the essential undecidability of $\RR$
also holds without using the equivalence $x \leq y \leftrightarrow \exists z \; z + x = y$.

The $\LA$-theory $\RR_0$ is obtained from $\RR$ by replacing the axiom $\mathsf{R5}$ with the following $\mathsf{R5'}$: 
\begin{description}
	\item [$\mathsf{R5'}$] $\num{m} \leq \num{n}$ (if $m \leq n$)
\end{description}
Alternatively, we can axiomatize $\RR_0$ by dropping {\sf R5} alltogether and replacing {\sf R4} by:
\begin{description}
	\item [$\mathsf{R4'}$] $\forall x\, \bigl(x \bleq \num{n} \iff \bigvee_{i \bleq n} x = \num{i}\bigr)$
\end{description}
We make $\mathsf{R1}$-$\mathsf{4}$ plus $\mathsf{R5'}$ our official axiomatization.

\begin{rem}{\small
Our definitions of {\sf R} and ${\sf R}_0$ correspond to those in {\v{S}}vejdar's paper \cite{Sve08}. 

Vaught's definition of ${\sf R}_0$ in \cite{Vau62} is in the same spirit as ours, but still differs.
He presents the
theory in a relational format without identity. There is a mistake in Vaught's statement of the
axioms. The theory as given in the paper clearly has a decidable extension.
As possible repairs, one could add the axioms for the totality of the functions or replace, both in Axiom I and II, the second occurrence of $\to$ 
by $\iff$. 

Jones and Shepherdson in \cite{JS83} discuss both the version of {\sf R} with and without a defined relation $\leq$. 
They use ${\sf R}'_0$ for our ${\sf R}_0$.}
\end{rem}

It is easy to see that $\RR_0$ is a proper subtheory of $\RR$. We note that all the axioms of $\RR_0$ can be rewritten as $\Delta_0$-formulas.
This leads to a nice observation by V\'{\i}t\v{e}zslav {\v{S}}vejdar:

\begin{thm}[Cf.~{\v{S}}vejdar \cite{Sve08}]\label{SCompl}
An $\LA$-theory $T$ is $\Sigma_1$-complete if and only if $T \vdash \RR_0$. 
\end{thm}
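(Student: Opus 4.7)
The proof splits naturally into the two implications. For the direction $T \vdash \RR_0 \Rightarrow T$ is $\Sigma_1$-complete, I plan to prove $\Sigma_1$-completeness of $\RR_0$ itself, which transfers to every extension $T$ automatically. The strategy is the standard induction on $\Delta_0$-formulas showing that $\RR_0$ decides every $\Delta_0$-sentence in agreement with $\mathbb{N}$, followed by one further step to pass from $\Delta_0$ to $\Sigma_1$.

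For the base cases I first establish, by induction on terms using R1 and R2, the auxiliary fact that for each closed $\LA$-term $t$ with value $k$ one has $\RR_0 \vdash t = \num{k}$. This reduces all atomic sentences to comparisons between numerals: truth is handled by R1, R2, R3, and R5' directly, and the only nonroutine negative case is $\num{m} \not\leq \num{n}$ when $m > n$, where I apply R4 to reduce $x \leq \num{n}$ to the finite disjunction $\bigvee_{i \leq n} x = \num{i}$ and then contradict each disjunct via R3. Boolean steps follow immediately from the induction hypothesis. The main inductive step is bounded universal quantification: given $\RR_0 \vdash \psi(\num{i})$ for each $i \leq n$ from the induction hypothesis, I combine these with R4 to deduce $\forall x \leq \num{n}\,\psi$. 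Bounded existentials are dual, with R5' certifying that the chosen witness lies below the bound. The leap from $\Delta_0$- to $\Sigma_1$-completeness is then routine existential introduction: a true $\Sigma_1$-sentence has numeric witnesses, $\Delta_0$-completeness gives a proof of the resulting $\Delta_0$-instance, and the full $\Sigma_1$-sentence follows.

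For the reverse implication, assume $T$ is $\Sigma_1$-complete and show that $T$ proves every axiom of $\RR_0$. The instances of R1, R2, R3, and R5' are quantifier-free sentences true in $\mathbb{N}$, hence $\Sigma_1$, so $T$ proves them outright. For R4 the crucial observation is that $\forall x\,(x \leq \num{n} \to \bigvee_{i \leq n} x = \num{i})$ is (modulo the customary abbreviation) the bounded-universal, and therefore $\Delta_0 \subseteq \Sigma_1$, sentence $\forall x \leq \num{n}\,\bigvee_{i \leq n} x = \num{i}$; being true in $\mathbb{N}$, it is provable in $T$. Hence $T \vdash \RR_0$.

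The only real obstacle is the bounded-universal clause in the forward direction: it is the unique place where R4 is genuinely invoked, and everything else in the induction is bookkeeping around it. Reassuringly, this same observation is what drives the reverse implication, since the fact that bounded universals keep us inside $\Delta_0$ is exactly what places R4 within the reach of $\Sigma_1$-completeness.
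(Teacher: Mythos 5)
Your proof is correct and follows exactly the route that the paper indicates (the paper itself does not spell out a proof, merely citing {\v{S}}vejdar and noting that all axioms of $\RR_0$ can be rewritten as $\Delta_0$-formulas, which is precisely what drives your reverse implication). The forward direction via the standard $\Delta_0$-completeness induction — with R4 supplying the positive bounded-universal and negative bounded-existential (and negative atomic $\leq$) cases, and R5${}'$ supplying their duals — is the expected argument and is complete as you give it.
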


Cobham observed that $\RR$ is interpretable in $\RR_0$, and that, hence,
$\RR_0$ is essentially undecidable (see Vaught \cite{Vau62} and Jones and Shepherdson \cite{JS83}).\footnote{It is also easy to prove
the essential undecidability of $\RR_0$ directly.} We give Cobham's interpretation in Section~\ref{vaughtagain}.

In the formulation of $\RR_0$, if $x \leq y$ is defined by $\exists z \; z + x = y$ as in \cite{TMR53}, 
rather than primitive as in the present paper,
then the axiom $\mathsf{R5'}$ is redundant since it can be derived from $\textsf{R1}$. 
On the other hand, note that, in our signature $\LA$, the theory obtained from $\RR_0$ by 
removing $\mathsf{R5'}$ has a complete consistent decidable extension (see Jones and Shepherdson \cite{JS83}).
 
We say that a theory $T$ is \textit{essentially hereditarily undecidable} if every $\LA$-theory consistent with $T$ is undecidable (cf.~\cite{Vis22}). 
It is shown in \cite{TMR53} that every finitely axiomatized essentially undecidable theory is also essentially hereditarily undecidable. 
Here, since the theory $\RR_0$ is not finitely axiomatizable, it is nontrivial whether $\RR_0$ is essentially hereditarily undecidable. 
In fact, there exists a computably axiomatized essentially undecidable theory having a decidable subtheory (cf.~Ehrenfeucht \cite{Ehr57} and Putnam \cite{Put57}). 
Then, Cobham proved the following interesting theorem. 

\begin{thm}[Cobham, see Vaught {\cite[1.5]{Vau62}}]\label{Cob}
The theory $\RR_0$ is essentially hereditarily undecidable.  
\end{thm}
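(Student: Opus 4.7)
The plan is to reduce a recursively undecidable set to provability in $T$, using the certificate $\sq$ developed in Section~\ref{ce}. Fix an r.e.\ non-recursive set $A \subseteq \omega$, and write $A = \{n : \N \models \exists y\, \psi(\num{n}, y)\}$ for some $\Delta_0$-formula $\psi$. I form the \emph{certified} version
\[
\varphi_A(x) := \exists y\,\bigl(\sq(y) \wedge \psi(x, y)\bigr).
\]

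The two properties of $\varphi_A$ that I aim to extract from the certificate machinery are the following. \emph{Completeness:} if $n \in A$ then $\RR_0 \vdash \varphi_A(\num{n})$. Picking a standard witness $m$, by the $\Sigma_1$-completeness of $\RR_0$ (Theorem~\ref{SCompl}), $\RR_0 \vdash \psi(\num{n}, \num{m})$, and by the design of the certificate, $\RR_0 \vdash \sq(\num{m})$. \emph{Refutation:} if $n \notin A$ then $T \vdash \neg \varphi_A(\num{n})$. Heuristically, the certificate forces any hypothetical witness $a$ with $\sq(a)$ to be number-like enough that the bounded statement $\psi(\num{n}, a)$ can hold only via a genuine standard witness for $n \in A$; so the negation is provable by purely certification-based reasoning, which any theory consistent with $\RR_0$ accommodates.

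Given Completeness and Refutation, the consistency of $T + \RR_0$ yields the equivalence
\[
n \in A \iff T \nvdash \neg \varphi_A(\num{n}).
\]
Indeed, for $n \in A$, Completeness makes $T + \varphi_A(\num{n})$ consistent, as a subtheory of the consistent $T + \RR_0$, so $T$ does not refute $\varphi_A(\num{n})$; the converse is Refutation. If $T$ were decidable, the right-hand side would be decidable in $n$, contradicting the fact that $A$ is not recursive, and hence $T$ is undecidable.

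The main obstacle is establishing Refutation, because $T$ is only assumed consistent with $\RR_0$ rather than an extension of it; a bare representability result for r.e.\ sets inside $\RR_0$ would only give $\RR_0 \vdash \neg\varphi_A(\num{n})$, which does not transfer to arbitrary $T$. This is exactly what the certificate is engineered to remedy: the certification-based reasons for refutation propagate to any consistent companion theory. Once the relevant lemmas of Section~\ref{ce} are in hand, the argument here is a compact packaging of them.
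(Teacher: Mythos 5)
Your Refutation step is a genuine gap, and the heuristic you offer for it is actually backwards. Theorem~\ref{CSS}(3) (equivalently Theorem~\ref{QS}(3)) tells you that when $\sigma$ is false, $\certo{\sigma}\vdash\RR_0$; it does \emph{not} tell you that $T$ refutes $\certo{\sigma}$, nor does anything else in Section~\ref{ce}. In fact the opposite phenomenon is what the certificate is built to control: by Lemma~\ref{keukensmurf}, a model can perfectly well contain a certified non-standard witness $\virtob$ of a false $\sigma$; the axioms $\Ax{1}$--$\Ax{10}$ do not force the witness to be standard, they merely force any such $\virtob$ to lie above all numerals, which is then exploited (via Lemma~\ref{vrolijkesmurf}) to make the whole model $\Sigma_1$-complete. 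So $\neg\,\varphi_A(\num{n})$ is not something that ``any theory consistent with $\RR_0$ accommodates''; typically even $\RR_0$ itself will not prove it. Consequently, the equivalence $n\in A \iff T\nvdash\neg\,\varphi_A(\num{n})$ fails from right to left, and your reduction to the non-recursiveness of $A$ collapses.

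The correct use of the machinery is different in kind. What you do get for free is this: if $n\notin A$ then $\varphi_A(\num{n})\vdash\RR_0$, so $\varphi_A(\num{n})$ is a \emph{finitely axiomatized essentially undecidable} sentence. The work then lies in showing that, for some $n\notin A$, this sentence is still \emph{consistent with} $T$---and that is proved not by a pointwise refutation claim but by a complexity mismatch or, better, by a Rosser-style witness comparison as in the paper's two proofs of Theorem~\ref{Vau1}. For instance: assume $T$ c.e.; the set $B:=\{n: T+\varphi_A(\num{n})\text{ consistent}\}$ is co-r.e.\ and contains $A$ (your Completeness step, which is fine, plus consistency of $T+\RR_0$); since an r.e.\ non-recursive $A$ is not co-r.e., $B\supsetneq A$, giving some $n\in B\setminus A$; then $\varphi_A(\num{n})$ is a finite essentially undecidable theory consistent with $T$, and the Tarski--Mostowski--Robinson result for finite theories finishes the job. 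Note also a smaller point: $\varphi_A(\num{n})$ is not literally of the form $\sigma^{\sq}$ for a pure 1-$\Sigma_1$ $\sigma$ (your $\psi$ is only $\Delta_0$, not pure, and $\num{n}$ appears as a parameter), so one must route through Proposition~\ref{pure2} before the lemmas of Section~\ref{ce} apply.
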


A proof of Cobham's theorem was presented in Vaught \cite{Vau62}. 
Vaught also showed two strengthenings of Cobham's theorem. 

For each $i \in \omega$, let $\W{i}$ denote the c.e.~set with the index $i$. 
We say that a pair $(X, Y)$ of disjoint c.e.~sets is \textit{effectively inseparable} if for any $i, j \in \omega$, if 
$X \subseteq \W{i}$, $Y \subseteq \W{j}$, and $\W{i} \cap \W{j} = \emptyset$, then we can effectively 
find an element $x$ such that $x \notin \W{i} \cup \W{j}$.  
For each theory $T$, let $T_{\mathfrak{p}}$ and $T_{\mathfrak{r}}$ be the set of all theorems of $T$ and the set of all sentences refutable in $T$, respectively. 
We say that a consistent theory $T$ is \textit{strongly effectively inseparable} if the pair $(T_{\mathfrak{p}}, \emptyset_{\mathfrak{r}})$ is effectively inseparable (cf.~\cite{KV24}). 
The first strengthening is the following: 

\begin{thm}[Vaught {\cite[5.2]{Vau62}}]\label{Vau1}
The theory $\RR_0$ is strongly effectively inseparable. 
\end{thm}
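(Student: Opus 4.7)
The plan is to construct, uniformly computably in $(i, j)$, a Rosser-style self-referential sentence $\varphi$ such that $\varphi \notin W_i \cup W_j$, mimicking the classical pattern but using certification to compensate for the weakness of $\RR_0$. Let $\sigma_k(x, y)$ be a $\Delta_0$-formula for Kleene's T-predicate, so that $y \in W_k$ iff $\exists x\, \sigma_k(x, y)$, and let $\virt$ denote the certification predicate (to be developed in Sections~\ref{ce}--\ref{wicomp}). Using the diagonal lemma, I obtain a $\Sigma_1$-sentence $\varphi$ such that, over $\RR_0$, $\varphi$ is equivalent to
\[
\exists x \bigl[\, \virt(x) \wedge \sigma_j(x, \gn{\varphi}) \wedge \forall y \leq x \, \bigl( \virt(y) \to \neg \sigma_i(y, \gn{\varphi}) \bigr) \bigr].
\]

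The easier case is $\varphi \notin W_j$. Suppose for contradiction that $\varphi \in W_j$ with least standard witness $a$. Disjointness forces $\varphi \notin W_i$, so $\sigma_i(\num{k}, \num{\gn{\varphi}})$ is false for each $k \leq a$. The certificate lemmas give that $\num{a}$ is certified and that in $\RR_0$ the bounded quantifier over certified $y \leq \num{a}$ collapses to a finite conjunction over the numerals $\num{0}, \dots, \num{a}$, each instance of which is refutable in $\RR_0$. Thus the displayed sentence is true in $\N$ (witnessed by $x = \num{a}$) and $\Sigma_1$, so Theorem~\ref{SCompl} gives $\RR_0 \vdash \varphi$, placing $\varphi \in (\RR_0)_{\mathfrak{p}} \subseteq W_i$, a contradiction.

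The harder case $\varphi \notin W_i$ is the main technical obstacle. Under the strong hypothesis $W_j \supseteq \emptyset_{\mathfrak{r}}$, a bare $\RR_0$-refutation of $\varphi$ only yields $\neg\varphi \in (\RR_0)_{\mathfrak{p}} \subseteq W_i$, not the contradiction we need. The plan is to push the dual Rosser step through certification — exploiting once more that certified witnesses below a standard numeral behave like a finite tuple of numerals — to produce a refutation of $\varphi$ that goes through in pure predicate logic, so that $\varphi \in W_i$ forces $\varphi \in \emptyset_{\mathfrak{r}} \subseteq W_j$ and again contradicts disjointness. This conversion of an $\RR_0$-refutation into a logical one via certification is exactly what the paper's machinery is designed to enable. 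Since $(i, j) \mapsto \varphi(i, j)$ is manifestly computable, the theorem follows.
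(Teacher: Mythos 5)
Your overall strategy—diagonalize to get a Rosser-style fixed point $\varphi$ built on top of a certified witness, then split into the cases $\varphi \in \W{j}$ and $\varphi \in \W{i}$—is the same route the paper takes in its second proof, where the fixed point is a certified witness-comparison sentence $\certo{\rho}$ with $\RR_0 \vdash \rho \iff \eta(\certo{\rho}) \leq \xi(\certo{\rho})$. Your first case is essentially correct: if $\varphi \in \W{j}$ and $\varphi \notin \W{i}$, then the displayed $\Sigma_1$-sentence is true in $\N$, $\Sigma_1$-completeness (Theorem~\ref{SCompl}) gives $\RR_0 \vdash \varphi$, and so $\varphi \in \RR_{0\mathfrak p} \subseteq \W{i}$, a contradiction. (The intermediate remarks about collapsing the bounded quantifier are redundant once you invoke Theorem~\ref{SCompl}.)

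The second case is where the proof has a genuine gap, and it is not merely that you leave it as a plan. The argument you sketch—take a model $\mc M \models \varphi$ with witness $\virtob$, show the standard numerals up to the $\sigma_i$-witness $b$ are all $\neq \virtob$, then transfer $\sigma_i(\num b, \num{\gn\varphi})$ from $\N$ into $\mc M$ to contradict the inner universal—does work in outline, but it relies on Lemma~\ref{vrolijkesmurf}, which transfers a pure $\Delta_0$ fact $\varphi(\num{n_0},\ldots,\num{n_i})$ from $\N$ to $\mc M$ only when \emph{all} parameters $n_0,\ldots,n_i$ are $\leq k$, where $k$ is the threshold in the standing assumption $(\dag)$. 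Your matrix contains the fixed numeral $\num{\gn\varphi}$, which is typically enormous and is not bounded by the outer existential variable $x$. So when you pick the least $m^\star$ with $\mc M \models \num{m^\star} = \virtob$ and set $k=m^\star$, you cannot apply the transfer lemma to $\sigma_j(\num{m^\star}, \num{\gn\varphi})$, because $\gn\varphi \nleq m^\star$ in general, and the contradiction never materializes. The paper avoids this by always passing through Proposition~\ref{pure2}, which repackages a $\Sigma_1$-sentence as $\exists x\, \exists \vv v \leq x\, \lambda_0(\vv v)$ so that every numeral parameter is dominated by the outer bound; only after that normalization do the certification lemmas apply. Concretely, your fixed-point formula needs a conjunct like $\num{\gn\varphi} \leq x$ (or the pure $1$-$\Sigma_1$ normal form), and the ``dual Rosser step'' you defer is precisely what the paper packages as Theorem~\ref{ijverigesmurf} (via Lemma~\ref{hulpsmurf} and the well-behavedness machinery in Section~\ref{wicomp}); you should either invoke that theorem or reproduce its proof, including the well-behavedness hypothesis it places on one of the compared formulas to compensate for the absence of {\sf R}5 in $\RR_0$.
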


In fact, Cobham's theorem follows easily from Theorem \ref{Vau1}. 
The second one is the following theorem that immediately implies Cobham's theorem, but no proof was presented in Vaught's paper.

\begin{thm}[Vaught {\cite[7.1]{Vau62}}]\label{Vau2}
For any c.e.~$\LA$-theory $U$, if $\RR_0 + U$ is consistent, then there exists a finitely axiomatized
$\LA$-theory $S$ extending $\RR_0$ such that $S + U$ is also consistent. 
\end{thm}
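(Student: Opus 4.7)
The plan is to use the certificate machinery developed in Sections~\ref{ce}--\ref{wicomp} to build $S$ by relativizing the axioms of a convenient finitely axiomatized extension of $\RR_0$ to the certificate $\sq$. Concretely, I fix a certificate $\sq(x)$ and a background finitely axiomatized extension $T_0$ of $\RR_0$ (Robinson's $\mathsf{Q}$ will do) and form $S$ by replacing each universally quantified axiom $\forall \vec z\,\psi(\vec z)$ of $T_0$ by its certificate-relativized version $\forall \vec z\,(\bigwedge_i \sq(z_i) \to \psi(\vec z))$, together with finitely many closure axioms for $\sq$: $\sq(0)$, $\forall x\,(\sq(x) \to \sq(\s x))$, and closure under $+$ and $\times$. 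By construction, $S$ is finitely axiomatized.

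That $S \vdash \RR_0$ is then routine. For each axiom of $\RR_0$ involving numerals $\num{m},\num{n},\num{m{+}n}$, first derive $\sq(\num{m}),\sq(\num{n}),\sq(\num{m{+}n})$ by induction on the complexity of the numerals using the closure axioms; then apply the corresponding relativized $T_0$-axiom to obtain the $\RR_0$-identity. The really delicate step is showing that $S + U$ is consistent. The plan is to start from a model $\mc M$ of $\RR_0 + U$, which exists by hypothesis, and use the certified extension machinery to produce an interpretation of $\sq$ in $\mc M$ (or in an elementary extension) whose certified part is a definable substructure containing every numeral and satisfying the relativized $T_0$-axioms, while no constraint is placed on the remainder of $\mc M$, so that $U$'s axioms remain undisturbed.

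The main obstacle is precisely this last step: for arbitrary c.e.~$U$ consistent with $\RR_0$, some model of $\RR_0 + U$ must support such an interpretation of $\sq$. This is where the paper's earlier technical results are essential --- in particular, the \certh{} together with the witness-comparison lemmas of Section~\ref{wicomp} --- since they give exactly the coordination needed between the $\Sigma_1$-content of the certificate and the arbitrary c.e.~axiom set $U$. Once such an interpretation is secured, the relativized axioms of $S$ hold automatically in $\mc M$, making $\mc M$ into a model of $S + U$ and establishing the required consistency.
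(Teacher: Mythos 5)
There is a genuine and fatal gap, and it lies at the level of architecture rather than in the "main obstacle" you flag yourself. Your $S$ is \emph{fixed in advance}: you pick a certificate predicate, a background finite theory $T_0$, and form $S$ once and for all by relativization; nothing in your construction of $S$ depends on $U$. But no single finitely axiomatized $S$ extending $\RR_0$ can be consistent with every $U$ consistent with $\RR_0$. Since $\RR_0$ is not finitely axiomatizable (the paper notes this explicitly), any finitely axiomatized $S\vdash \RR_0$ must prove some $\chi$ with $\RR_0\nvdash\chi$; then $U:=\{\neg\chi\}$ is consistent with $\RR_0$ but $S+U$ is inconsistent. So the quantifier order in Theorem~\ref{Vau2} is essential: $S$ must be chosen \emph{after} $U$, and the whole content of the theorem is in how one builds an $S$ tailored to $U$.

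Even setting that aside, the step you label "the main obstacle" is precisely where all the mathematical work lives, and the proposal does not fill it. Saying that the Certified Extension Theorem and the witness-comparison lemmas "give exactly the coordination needed" is not an argument; those results have hypotheses and produce specific objects, and neither is invoked with any concrete instantiation. Moreover, the intended mechanism --- realize the certificate as a definable substructure of a model $\mc M\models\RR_0+U$ so that "$U$'s axioms remain undisturbed" --- is not sound as stated: the sentences of $U$ quantify over the whole domain of $\mc M$, not just the certified part, so you cannot decouple them from whatever structure you impose. (You also seem to be treating the certificate as a unary closure predicate à la Cobham's interpretation of $\RR$ in $\RR_0$, whereas the paper's $\virt(v)$ is a bounded theory-of-a-number about a single witness $v$; these play different roles.)

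The paper's actual route is quite different and does make $S$ depend on $U$. It proves (Theorem~\ref{gen_Vaught2}) that any $\RP$-sourced c.e.~theory $T$ (so in particular $\RR_0$) is $\cothe{T}$-creative: from an index $i$ of a c.e.~set $\W i$ disjoint from $T_{\mathfrak p}$, one uses the fixed point lemma (Theorem~\ref{puntneussmurf}) to produce a $\Sigma_1$-sentence $\jmath$ with $\RR_0\vdash \jmath\iff \alpha_i(\certt{\jmath}^\tau)\leq\PR_T(\certt{\jmath}^\tau)$, a Rosser-style witness comparison. Theorems~\ref{CSS}/\ref{CSS1} and \ref{ijverigesmurf2} then force $\certt{\jmath}^\tau\notin T_{\mathfrak p}\cup\W i$ and $\jmath$ false, hence $\certt{\jmath}^\tau\vdash T$. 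Applying this with $\W i$ the refutable sentences of $U$ (via Proposition~\ref{X_creativity}) yields a single sentence $\varphi:=\certt{\jmath}^\tau$ with $\varphi\vdash\RR_0$, $\RR_0\nvdash\varphi$, and $U\nvdash\neg\varphi$; take $S:=\{\varphi\}$. Note that the fixed point $\jmath$, and hence $S$, is computed \emph{from an index of $U$}, which is exactly the dependence your proposal lacks.
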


%\subsection{Another proof of Cobham's theorem}
Recently, a more comprehensible proof of Cobham's theorem (Theorem \ref{Cob}) was also given in Visser \cite{Vis17}. 
\begin{defn}[Pure $\Delta_0$- and $\Sigma_1$-formulas]
Let $\varphi$ be an $\LA$-formula. 
\begin{itemize}
	\item We say that $\varphi$ is a \textit{pure $\Delta_0$-formula} if $\varphi$ is $\Delta_0$ and satisfies the following conditions: 
	\begin{enumerate}
%		\item $\varphi$ does not contain the connective $\to$; 
%		\item Every negation $\neg$ contained in $\varphi$ is applied only to an atomic formula; 
		\item For any atomic formula of the form $t_1 \leq t_2$ contained in $\varphi$, terms $t_1$ and $t_2$ are both variables; 
		\item Every atomic formula of the form $t_1 = t_2$ contained in $\varphi$ is of one of the forms $x_0 = x_1$, $0 = x_0$, $\s{x_0} = x_1$, $x_0 + x_1 = x_2$, and $x_0 \times x_1 = x_2$, where $x_0$, $x_1$, and $x_2$ are variables. 		
	\end{enumerate}
	
	\item We say that $\varphi$ is a \textit{pure $\Sigma_1$-formula} if $\varphi$ is of the form $\exists \vv{x}\, \varphi_0(\vv{x})$, where $\varphi_0(\vv{x})$ is a pure $\Delta_0$-formula. 
	Here, the block $\vv{x}$ of quantifiers is allowed to be empty. 
\end{itemize}
\end{defn}

Our version of predicate logic does not contain the logical constants $\top$ and $\bot$ as primitive symbols. 
It is then shown that every pure $\Delta_0$-formula contains at least one free variable.
An effective procedure to obtain an equivalent pure $\Sigma_1$-formula for each $\Sigma_1$-formula is presented in \cite{Vis17}.

\begin{prop}[Visser \cite{Vis17}]\label{pure}
For any $\Sigma_1$-formula $\varphi(\vv{x})$, a pure $\Sigma_1$-formula $\varphi^\circ(\vv{x})$ satisfying the following conditions is effectively found: 
\begin{enumerate}
	\item $\N \models \forall \vv{x} \, (\varphi(\vv{x}) \leftrightarrow \varphi^\circ(\vv{x}))$, 
	\item $\forall \vv{x} \, (\varphi^\circ(\vv{x}) \to \varphi(\vv{x}))$ is logically valid. 
\end{enumerate}
\end{prop}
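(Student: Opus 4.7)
The plan is to construct $\varphi^\circ$ by a syntactic \emph{term flattening}. First I would bring $\varphi$ into the form $\exists \vv{x}\, \varphi_0(\vv{x}, \vv{z})$ with $\varphi_0$ a $\Delta_0$-formula using standard prenex manipulation; this is sound both in $\N$ and logically, so the reduction to this normal form costs nothing for either (1) or (2). Next I would introduce, for every non-variable subterm $t$ appearing in $\varphi_0$, a fresh variable $y_t$ intended to denote the value of $t$, and rewrite the atomic subformulas and bounded quantifiers of $\varphi_0$ using these variables so that the resulting matrix falls within the pure fragment.

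Concretely, the defining information for each non-variable subterm is recorded by the pure atomic equation appropriate to its head: $0 = y_0$ for the constant, $\s{y_u} = y_{\s u}$ for successor, $y_u + y_v = y_{u+v}$ for addition, and $y_u \times y_v = y_{u \times v}$ for multiplication. Every original atomic $t_1 = t_2$ becomes $y_{t_1} = y_{t_2}$ and every $t_1 \bleq t_2$ becomes $y_{t_1} \bleq y_{t_2}$ (both pure, since the $y_{t_i}$ are variables), and every bounded quantifier $\mathsf{Q} x \bleq t$ becomes $\mathsf{Q} x \bleq y_t$. Auxiliary variables $y_t$ for subterms that mention none of the bound variables of $\varphi_0$ I would existentially quantify at the outermost block, contributing to the pure $\Sigma_1$-prefix. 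For subterms $t$ that do mention a bound variable of $\varphi_0$, the corresponding $y_t$ must be introduced inside the scope of the enclosing bounded quantifier; to keep the inner body pure $\Delta_0$ I would add a single further fresh variable $M$ at the outer block and introduce each such $y_t$ by a pure bounded existential $\exists y_t \bleq M$.

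To verify (2), I would show logical validity of $\varphi^\circ \to \varphi$: from any witnesses to $\varphi^\circ$, the conjoined defining equations force $y_t = t$ by induction on term formation, and replacing the $y_t$ by $t$ collapses the relabelled matrix back to $\varphi_0$, yielding $\varphi$. For (1) only the direction $\varphi \to \varphi^\circ$ in $\N$ requires work, the converse being a consequence of (2): given any $\N$-witness I would interpret each $y_t$ as the value of $t$ in $\N$ and pick $M$ larger than all these values, so every defining equation and every bounded existential $\exists y_t \bleq M$ is satisfied and the relabelled matrix matches $\varphi_0$. The main obstacle will be the bookkeeping in the scoping step: when a subterm $t$ involves a bound variable of $\varphi_0$, I must check that placing $\exists y_t \bleq M$ at the right inner position yields a pure $\Delta_0$-formula, so that the common outer bound $M$ indeed suffices, and that the entire procedure is effective as a function of $\varphi$.
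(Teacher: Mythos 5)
The paper does not give its own proof here; it cites Visser~\cite{Vis17}. Your term-flattening approach---introducing graph variables $y_t$ for non-variable subterms with pure defining equations---is indeed the standard technique, but your proof has a genuine gap: it does not control the \emph{polarity} at which the inserted bounded existentials $\exists y_t \leq M$ occur, and the argument breaks on negated bounded quantifiers.

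Concretely, take $\varphi(a) := \neg\, \forall z \leq a\, (z + z \leq a + a)$, a $\Delta_0$- (hence $\Sigma_1$-) formula that is false in $\N$ for every value of $a$. Your construction gives
\[
\varphi^\circ(a) := \exists p\, \exists M\, \bigl(a + a = p \wedge \neg\, \forall z \leq a\, \exists y \leq M\, (z + z = y \wedge y \leq p)\bigr).
\]
For $a = 1$, choosing $p = 2$ and $M = 0$ makes the inner universal formula fail at $z = 1$ (no $y \leq 0$ equals $2$), so $\varphi^\circ(1)$ is true in $\N$ while $\varphi(1)$ is false; both clauses (1) and (2) are violated. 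Your verification of (2) tacitly assumes the defining equations $t = y_t$ occur only as positively occurring conjuncts, so that any witness to $\varphi^\circ$ forces $y_t = t$; under a negation this fails---the block $\exists y_t \leq M\,(\ldots)$ can instead be made \emph{unsatisfiable} by choosing $M$ small, which flips the truth value. Your verification of (1) has the dual problem: picking $M$ large so that every $\exists y_t \leq M$ is satisfied is only the right move when these quantifiers occur positively. The repair is to put the $\Delta_0$-matrix into negation normal form before flattening, so that every inserted $\exists y_t \leq M$ lands in a positive position; after that, the bookkeeping you flag (ordering the $y_t$'s by the subterm relation, placing each immediately inside its innermost enclosing bounded quantifier) is routine and your argument for both clauses goes through.
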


Here, we outline the proof of Cobham's theorem presented in \cite{Vis17}. 
At first, the finite $\LA$-theory $\mathsf{TN}$ (the theory of a number) is introduced. 
Then, for each pure $\Sigma_1$-sentence of the form $\exists \vv{x}\, \sigma_0(\vv{x})$, where $\sigma_0(\vv{x})$ is a pure $\Delta_0$-formula, let $\certo{\sigma}$ be the finite $\LA$-theory
\[
	\mathsf{TN} + \exists v\, \exists\vv{x} \bles v\, \sigma_0(\vv{x}). 
\]
Let $\sigma^\star$ be an $\LA$-sentence saying that there exists the least number $n$ such that the finite $\LA$-structure $\{0, 1, \ldots, n\}$ is a model of $\certo{\sigma}$. 
Then, the following three clauses hold for each pure $\Sigma_1$-sentence $\sigma$: \begin{enumerate}
	\item If $\N \models \sigma$, then $\RR_0 \vdash \sigma^\star$.  
	\item If $\N \models \neg\, \sigma$, then $\certo{\sigma} \vdash \RR_0$. 
	\item $(\RR_0 + \sigma^\star) \TR \certo{\sigma}$. 
\end{enumerate}
Here, $T \TR T'$ means that $T'$ is interpretable in $T$ (see Section \ref{bade} for the definition). 
Let $U$ be any $\LA$-theory such that $\RR_0 + U$ is consistent. 
We would like to show that $U$ is undecidable. 
We may assume that $U$ is a c.e.~theory. 
Then, the set $X : = \{\sigma \mid \sigma$ is a pure $\Sigma_1$  sentence and $\RR_0 + \sigma^\star + U$ is consistent$\}$ is $\Pi_1$-definable. 
Since the set $Y : = \{\sigma \mid \sigma$ is a true pure $\Sigma_1$-sentence$\}$ is not $\Pi_1$-definable, we have $X \neq Y$. 
By the first clause above, we have $Y \subsetneq X$, and hence $X \nsubseteq Y$. Then, we get a false pure $\Sigma_1$-sentence $\sigma$ such that $\RR_0 + \sigma^\star + U$ is consistent. 
By the second clause, we have $\certo{\sigma} \vdash \RR_0$, and thus the theory $\certo{\sigma}$ is essentially undecidable. 
Since $\certo{\sigma}$ is finite and $(\RR_0 + \sigma^\star) \TR \certo{\sigma}$ by the third clause, there exists a finite subtheory $S$ of $\RR_0 + \sigma^\star$ such that $S \TR \certo{\sigma}$. 
Then, $S$ is essentially undecidable and $S + U$ is consistent. 
Since $S$ is finite, we conclude that $U$ is undecidable.

However, it seems that the proof by Visser cannot be used to prove Vaught's theorems (Theorems \ref{Vau1} and \ref{Vau2}) as it is, 
because the notion of interpretability between theories is used in it. 

%It will convenient to have a slightly extended class of $\Sigma_1$-formulas, say, the $\Sigma_1^+$-formulas.
%We define this class as follows. Let $\delta$ range over $\Delta_0$-formulas and $v$ over variables.
%\begin{itemize}
 %   \item 
  %  $\sigma ::= \exists v\,\delta \mid (\sigma \wedge \sigma) \mid (\sigma \vee \sigma) \mid \exists v \, \sigma$
%\end{itemize}

In this paper, we prove the following theorem using a modification of Visser's strategy.  

\begin{thm}[\certh]\label{CSS}
For each $\Sigma_1$-sentence $\sigma$, we can effectively find a sentence $\certo{\sigma}$ satisfying the following conditions: 
\begin{enumerate}[1.]
    \item $ \certo{\sigma} \vdash \sigma$. 
	\item If $\N \models \sigma$, then $\RR_0 \vdash \certo{\sigma}$. 
	\item If $\N \models \neg\, \sigma$, then $\certo{\sigma} \vdash \RR_0$. 
\end{enumerate}
We call such sentences $\certo{\sigma}$ \textit{certified $\Sigma_1$-sentences} for $\RR_0$. 
\end{thm}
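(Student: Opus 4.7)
My plan is to build $\certo{\sigma}$ by augmenting a $\Sigma_1$-form of $\sigma$ with a single $\Delta_0$ ``certificate'' asserting that a witness lies inside a number-like initial segment. First, I apply Proposition~\ref{pure} to replace $\sigma$ by its pure $\Sigma_1$-version $\sigma^\circ = \exists \vv{x}\, \sigma_0(\vv{x})$; since $\sigma^\circ \to \sigma$ is logically valid, using $\sigma^\circ$ is harmless for condition~1.

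Next, I specify a single $\Delta_0$ formula $\mathsf{Cert}(v)$, a ``theory-of-a-number'' relativized to the segment $\{u : u \bleq v\}$, whose finitely many bounded clauses assert: the existence of $\num{0}$ in the segment, closure of the segment under successor up to $v$, recursive correctness of the graphs of $+$ and $\times$ on arguments and values $\bleq v$, and a $v$-bounded linearity/trichotomy clause for $\bleq$. I then set
\[
\certo{\sigma} \;:=\; \exists v\, \bigl(\mathsf{Cert}(v) \wedge \exists \vv{x} \bleq v\, \sigma_0(\vv{x})\bigr),
\]
which is $\Sigma_1$. Condition~1 is immediate by forgetting $v$ and $\mathsf{Cert}$: the bounded inner witness yields the unbounded one, so $\certo{\sigma} \vdash \sigma^\circ \vdash \sigma$. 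Condition~2 reduces at once to Theorem~\ref{SCompl}: if $\N \models \sigma$, pick $v \in \N$ large enough to satisfy $\mathsf{Cert}(v)$ and to bound some standard witness for $\sigma^\circ$, so $\certo{\sigma}$ becomes a true $\Sigma_1$-sentence and is therefore provable in $\RR_0$.

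The real work is condition~3. Given the meta-hypothesis $\N \models \neg\sigma$, I aim, for each axiom $A$ of $\RR_0$ with largest numeral $\num{n}$ appearing, to derive $A$ from $\certo{\sigma}$ in two stages. In the first stage I derive $\num{n} \bleq v$: by the trichotomy clause of $\mathsf{Cert}(v)$ one has $\num{n} \bleq v \vee v \bles \num{n}$, and in the second disjunct the bounded-successor clause of $\mathsf{Cert}(v)$, unfolded $n$ times, forces every element $\bleq v$ to equal some $\num{i}$ with $i < n$; hence the witness clause $\exists \vv{x} \bleq v\, \sigma_0(\vv{x})$ reduces to a finite disjunction of concrete $\sigma_0(\num{i_1},\dots,\num{i_k})$ with $i_j < n$, each a $\Delta_0$-sentence false in $\N$ and therefore refutable from the recursive graph clauses of $\mathsf{Cert}(v)$ in pure logic; the contradiction yields $\num{n} \bleq v$. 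In the second stage, with $\num{n} \bleq v$ in hand, the relativized arithmetic clauses of $\mathsf{Cert}(v)$ directly deliver $A$, whether it is of the shape $\num{m} + \num{n} = \num{m+n}$, $\num{m} \times \num{n} = \num{m\cdot n}$, $\num{m} \neq \num{n}$, the $\mathsf{R4}$-disjunction, or $\num{m} \bleq \num{n}$.

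The principal obstacle is calibrating $\mathsf{Cert}(v)$ so that one and the same bounded conjunction simultaneously (i)~keeps $\certo{\sigma}$ in $\Sigma_1$, (ii)~decides, for every fixed $n$, each concrete $\sigma_0$-instance whose arguments are below $\num{n}$ (so that the case $v \bles \num{n}$ can be refuted in the first stage), and (iii)~once $\num{n} \bleq v$ has been established, yields each numerically specific axiom of $\RR_0$. Arranging the recursive clauses of $\mathsf{Cert}(v)$ so that each sub-derivation bootstraps from the same finite certificate — and doing so with bounded quantifiers only — is the technical heart of the proof and, I expect, the main engineering effort of the section that follows.
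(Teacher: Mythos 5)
Your construction and overall strategy coincide with the paper's: the paper also passes to a pure $\Sigma_1$-form of $\sigma$, defines $\certo{\sigma}$ as ``there is a $v$ satisfying a finite $\Delta_0$ theory-of-a-number together with a $v$-bounded witness for $\sigma_0$'', and dispatches conditions 1 and 2 exactly as you do (condition 2 via {\v{S}}vejdar's $\Sigma_1$-completeness characterization, Theorem~\ref{SCompl}). For condition 3 the paper takes a semantic detour rather than your explicitly syntactic two-stage derivation: it fixes an arbitrary model $\mc N$ of $\certo{\sigma}$ with witness $\virtob$, shows via a least-counterexample argument (Lemma~\ref{keukensmurf}) that $\virtob$ cannot be any standard numeral since $\sigma$ is false, then proves (Lemma~\ref{vrolijkesmurf}) that every true pure $\Delta_0$-fact about numerals holds in $\mc N$, concludes $\Sigma_1$-completeness of $\certo{\sigma}$ by the completeness theorem, and invokes {\v{S}}vejdar again. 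Your ``trichotomy plus refute the small case'' plan is the syntactic shadow of this; it can be made to work, but be aware of two points that the semantic framing sidesteps. First, the trichotomy $\num{n}\bleq v \vee v \bles \num{n}$ is not a clause of $\mathsf{Cert}(v)$ but must itself be derived by meta-induction on $n$ from the $0\bleq v$ and successor-closure clauses; the cleaner derived disjunction is $\num{n}\bleq v \vee \bigvee_{i<n} v=\num{i}$. Second, and more delicately, your phrase ``recursive correctness of the graphs of $+$ and $\times$ on arguments and values $\bleq v$'' is too weak for the refutation step: to refute a false atom like $\num{a}\times\num{b}=\num{c}$ with $a,b,c\leq v$ one must decide equalities among terms whose true values range up to $v^2+v$, which exceed the bound. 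The paper's axioms $\Ax{5}$--$\Ax{8}$ are deliberately phrased in terms of $\s{((x\times y)+z)}$ with $x,y,z\bleq v$ precisely so that Lemma~\ref{LemN} can separate numerals up to $k^2+k$; your $\mathsf{Cert}(v)$ needs an analogous device, and a naive ``arguments and values $\bleq v$'' clause will not suffice. You correctly identify the calibration of $\mathsf{Cert}(v)$ as the technical heart; that is exactly where the paper's effort goes.
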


We note that Visser's $[\sigma]$ does give us the analogues of (1) and (3) of Theorem~\ref{CSS}. However the analogue of (2) fails. We only get: if $\N \models \sigma$, then $\RR_0 \rhd [\sigma]$.

\section{Basic Definitions}\label{bade}
We only present a brief outline of the basic notions. For more detail,
we refer the reader
to, e.g., \cite[Appendix A]{Vis17}.

A \emph{theory} $U$ in this paper is a theory of predicate logic of finite signature $\Theta$.
A theory $U$ is given by a finite signature $\Theta$ and a set of axioms $X$ of the signature $\Theta$.

\begin{defn}\label{iddef}
The conjunction of the finitely many axioms for identity of $U$ is $\idax{\Theta}$ or $\idax{U}$.
The axioms of identity are officially part of the logic but at times we will treat them as if
they were part of the axioms of the theory.
\end{defn}

Let us fix an infinite sequence of variables $v_0,v_1,\dots$. Suppose $\Theta$ is a relational signature. 
\emph{A one-dimensional parameter-free translation $\tau:\Theta \to \Xi$} specifies a domain predicate
$\delta_\tau$, with at most $v_0$ free,  and, for an $n$-ary predicate symbol $R$ of $\Theta$ a $\Xi$-formula $R_\tau$,
 such that the free variables of $R_\tau$ are among $v_0,\dots,v_{n-1}$. 
 We treat identity as if it were a predicate from the signature rather than a logical predicate.
 We lift the translation
 to the full $\Theta$-signature as follows:
 \begin{itemize}
     \item $(R(\vv x))^\tau :=  R_\tau[\vv v:= \vv x]$. Here we assume an automatic mechanism of renaming variables in case of clashes.
     \item $(\cdot)^\tau$ commutes with the propositional connectives.
     \item $(\forall x \psi)^\tau := \forall x\, (\delta_\tau [v_0:= x] \to \psi^\tau)$.
     \item $(\exists x \psi)^\tau := \exists x\, (\delta_\tau [v_0:= x] \wedge \psi^\tau)$.
 \end{itemize}

 If $\Gamma$ is a set of $\Theta$-sentences, we write $\Gamma ^\tau$ for $\verz{\phi^\tau \mid \phi\in \Gamma}$.

We can extend translations to $m$-dimensional ones by translating a variable from the $\Theta$-signature to a sequence of
variables of length $m$ of the $\Xi$-signature. We can also allow parameters in our interpretations. 

We can define the identity translation and composition of translations in the obvious way.

An \emph{interpretation} ${\sf K}$ of $U$ in $V$ is a triple $(U,\tau, V)$, where $\tau$ is a translation from the
signature $\conTheta_U$ of $U$ to the signature $\conTheta_V$ of $V$. We demand that, for every $U$-sentence
$\phi$ such that $U \vdash \phi$, we have $V\vdash \phi^\tau$. We write ${\sf K}:U \lhd V$ or ${\sf K}: V \rhd U$ for
${\sf K}$ is an interpretation of $U$ in $V$. We write $U\lhd V$ or $V\rhd U$ for: there is a ${\sf K}$ such that ${\sf K}:U \lhd V$.

We have also the notion of \emph{local interpretability}. The theory $V$ \emph{locally interprets the theory $U$}, or $V \rhd_{\sf loc}U$, iff
for each finitely axiomatized subtheory $U_0$ of $U$, we have $V \rhd U_0$.

We will make use of the following operations on theories. Consider theories $U$ and $V$. 
\begin{itemize}
    \item 
    The theory $U \ovee V$ is defined as follows.
    The signature of $U \ovee V$ is the disjoint sum of the signatures of $U$ and $V$
    and, in addition, a fresh 0-ary predicate $P$.  The axioms of $U\ovee V$ are all $P \to \phi$, where
    $\phi$ is an axiom of $U$, plus all $\neg\, P \to \psi$, where $\psi$ is an axiom of $V$.
    \item 
    The theory $U \owedge V$ is defined as follows.
    The signature of $U \owedge V$ is the disjoint sum of the signatures of $U$ and $V$
    and, in addition, a fresh 1-ary predicate $\triangle$.  The axioms of $U\owedge V$ are the relativizations $\phi^\triangle$ of
    all axioms of $\phi$ of $U$ w.r.t. $\triangle$, plus  the relativizations $\psi^{\neg\,\triangle}$ of
    all axioms of $\psi$ of $V$ with respect to the complement of $\triangle$, plus axioms saying that neither $\triangle$ nor its complement are empty.
\end{itemize}

We have the following important properties.
\begin{thm}
    \begin{enumerate}[a.]
        \item 
        $(U\ovee V) \rhd W$ iff $U\rhd W$ and $V \rhd W$. 
        \item 
        $W \rhd (U\owedge V)$ iff $W\rhd U$ and $W \rhd V$. 
    \end{enumerate}
\end{thm}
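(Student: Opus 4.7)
The plan is to prove each biconditional by exhibiting explicit translations, exploiting the fact that each biconditional has an easy direction obtained by composing the hypothesis with ``universal'' interpretations tailored to $\ovee$ and $\owedge$.

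For part (a), I first establish $U \rhd (U \ovee V)$ and $V \rhd (U \ovee V)$. To interpret $U \ovee V$ in $U$, take the identity translation on the signature of $U$, send the fresh propositional $P$ to the tautology $\forall x\,(x = x)$, and send each fresh $V$-predicate to an arbitrary formula of the correct arity. Each axiom $P \to \phi$ then becomes $\phi$ itself and each $\neg\, P \to \psi$ is vacuous. Symmetrically, $V \rhd (U \ovee V)$ by sending $P$ to $\exists x\,(x \neq x)$. Composing with $(U \ovee V) \rhd W$ yields $(\Rightarrow)$. For $(\Leftarrow)$, given $\tau : W \lhd U$ and $\sigma : W \lhd V$ (padded to a common dimension), define $\rho : W \lhd (U \ovee V)$ by a case-split on $P$: set $\delta_\rho := (P \wedge \delta_\tau) \vee (\neg\, P \wedge \delta_\sigma)$ and $R_\rho := (P \wedge R_\tau) \vee (\neg\, P \wedge R_\sigma)$ for each predicate $R$ of $W$. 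Arguing in $U \ovee V$ by cases on $P$, in the positive case the $U$-axioms are available and $\rho$ collapses to $\tau$ on $W$-formulas, so the $W$-axioms go through; the negative case is analogous via $\sigma$.

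For part (b), the dual universal interpretations are $(U \owedge V) \rhd U$ and $(U \owedge V) \rhd V$, obtained from the identity on $U$'s (respectively $V$'s) symbols with domain $\triangle$ (respectively $\neg\, \triangle$); an axiom $\phi$ of $U$ then translates to $\phi^\triangle$, which is an axiom of $U \owedge V$. Composing with $\rho : (U \owedge V) \lhd W$ gives $(\Rightarrow)$. For $(\Leftarrow)$, given $\tau : U \lhd W$ and $\sigma : V \lhd W$, I pad both to a common dimension $d$ and build $\rho$ of dimension $d + 2$ whose tuples carry an auxiliary coordinate pair $(z_1, z_2)$ as a tag: the $\triangle$-part consists of tuples $(z, z, \vv y)$ with $\vv y \in \delta_\tau$, while the $\neg\, \triangle$-part consists of $(z_1, z_2, \vv y)$ with $z_1 \neq z_2$ and $\vv y \in \delta_\sigma$. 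Accordingly $\triangle_\rho$ is $z_1 = z_2$, and each $U$- or $V$-predicate is translated via $\tau$ (respectively $\sigma$) on its side and arbitrarily off it. The axioms $\phi^\triangle$ of $U \owedge V$ then reduce under $\rho$ to $\phi^\tau$, provable in $W$; the axioms $\psi^{\neg\, \triangle}$ reduce analogously to $\psi^\sigma$.

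The main obstacle is the $(\Leftarrow)$-direction of (b): ensuring the disjoint-union construction is coherent across padding and tagging, and especially that both $\triangle_\rho$ and $\neg\, \triangle_\rho$ are non-empty in $W$. Non-emptiness of $\triangle_\rho$ follows from non-emptiness of $\delta_\tau$, but $\exists x\, \neg\, \triangle(x)$ needs a tag-pair with $z_1 \neq z_2$, which holds whenever $W$ is consistent with having at least two elements but would fail in the degenerate class of theories all of whose models are singletons; under the standard conventions (parametric, multi-dimensional interpretations, and non-degenerate background theories) this is handled routinely. A secondary routine verification, used in $(\Rightarrow)$ of both parts, is the syntactic identity $(\phi^\triangle)^\rho \equiv \phi^\tau$ (and its $\ovee$-analogue), showing that relativisation followed by translation agrees with the combined translation; this is an induction on formula structure.
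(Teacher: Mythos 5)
The paper states this theorem without proof --- it appears in the preliminary Section~\ref{bade} as a basic fact about $\ovee$ and $\owedge$, with a pointer to the literature --- so there is no in-paper argument to compare against. Your proof is the standard one: selector interpretations of $U \ovee V$ in $U$ and in $V$ obtained by fixing the truth value of $P$; relativization interpretations of $U$ and of $V$ inside $U \owedge V$ via $\triangle$ and its complement; a case-split-on-$P$ translation for $(U \ovee V) \rhd W$; and a tagged disjoint-union translation for $W \rhd (U \owedge V)$. The verifications you indicate (cases on $P$, relativization preserving derivability given a non-empty domain, composition of interpretations) are the right ones.

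You have also correctly put your finger on the one soft spot. Since $U \owedge V$ carries axioms forcing both $\triangle$ and its complement to be non-empty, it has no one-element models, while the tag $z_1 \neq z_2$ marking the $\neg\,\triangle$-side has no witness in a one-element model of $W$. This is a genuine degeneracy in the statement itself when read with no side conditions: take $W = U = V$ to be the empty-signature theory axiomatized by $\exists x\, (x = x)$; then $W \rhd U$ and $W \rhd V$ trivially, but $W \nrhd (U \owedge V)$, since any interpretation would have to produce an internal model inside the one-element model of $W$, which cannot satisfy both $\exists x\, \triangle(x)$ and $\exists x\, \neg\,\triangle(x)$. The usual conventions remove this: either restrict to theories proving the existence of two distinct elements (harmless in this paper, where the theories of interest all interpret $\RR_0$), or adopt a richer notion of translation with piecewise, disjunctively-defined domains so that the two halves of $U \owedge V$ need not be separated by a definable tag. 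Under either convention your argument is complete. One small wording quibble: $(\phi^\triangle)^\rho$ and $\phi^\tau$ are not literally the same formula --- the former carries the inert tag coordinates and the conjunct $z_1 = z_2$ in its relativizing bounds --- but the induction you describe yields a $W$-provable equivalence, which is all the composition step needs.
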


Thus, $U \ovee V$ is (an implementation of) the \emph{infimum} of $U$ and $V$ in the degrees of interpretability and
$U \owedge V$ is (an implementation of) the \emph{supremum} of $U$ and $V$ in the degrees of interpretability.
There is something notationally awkward about representing an infimum by $\ovee$ and a supremum by $\owedge$.
This awkwardness is due to a legacy problem. In the boolean intuition the most informative element
is the bottom, where in the interpretation-ordering the most informative element is the top.
We follow the boolean intuition here, treating $\ovee$ as a kind of disjunction of theories and
$\owedge$ as a kind of conjunction.

\section{\cert}\label{ce}
We prove Theorem \ref{CSS}. 
Our proof is based on the ideas from \cite{Vis17}, but we exclude from the proof the use of $\sigma^\star$ and interpretability. 

Let $x < y$ be an abbreviation of $x \leq y \land x \neq y$. We define certain special elements as follows.

\begin{defn}[\Vness]\label{certification}
An element $v$ is \emph{\virtu}, or $\virt(v)$, if it satisfies the following formulas. These formulas together form the certificate.
\begin{enumerate}[{\sf A}1.]
	\item  $0 \leq v$
	\item  $\forall x < v\;\; \s x \leq v$ 
	\item  %$\forall x\leq 0  \;\; x = 0$
 $\forall x \, (x\leq 0 \iff  x = 0)$
	\item  %$\forall x \leq v\, \forall y \leq \s x \, (y \leq x \lor y = \s x)$
 $\forall x \bles v\, \forall y\, (y\leq \s x \iff (y \leq x \vee y=\s x))$
	\item  $\forall x, y, z \bleq v\,\; \sbr{(x \times y) + z} \neq 0$
	\item  $\forall x, y, z, w \bleq v\,\; \sbr{(x \times y) + z} = \s w \to (x \times y) + z = w$
	\item  $\forall x, y \bleq v\; (x \times y) + 0 = x \times y$
	\item  $\forall x, y, z \bleq v\; (x \times y) + \s z = \sbr{(x \times y) + z} $
	\item  $\forall x \bleq v\; x \times 0 = 0$
	\item  $\forall x, y \bleq v\; x \times \s y = (x \times y) + x$
	%\item  $\forall x, y, z \leq v\; (x + y = z \to y \leq z)$
\end{enumerate}
\end{defn}

\noindent
We note that of the properties defining \vness, only $\Ax 3$ and $\Ax 4$ are not \emph{prima facie} $\Delta_0$. However,
we can rewrite $\Ax 3$ as $ \forall x\leq 0  \; x = 0 \wedge 0 \leq 0$ and we can rewrite $\Ax 4$ as 
\[\forall x \bles v\, (\forall y\bleq \s x \, (y \leq x \vee y=\s x) \wedge \forall y \bleq x\; y \leq \s x \wedge \s x \leq \s x).\]
So, modulo equivalence in predicate logic, \vness\ is $\Delta_0$.

The properties \Ax{5}--\Ax{8} look a bit different from the usual axioms in certificates. Their specific form is needed to prove
Lemma~\ref{LemAM}, which in its turn is needed to prove the negative atomic cases of Lemma~\ref{vrolijkesmurf}.

\begin{rem}
    We aimed to keep our definition of \vness\ reasonably simple. This has the advantage that it made clear that
    we can use a fairly light property. As we will see in Example~\ref{leergierigesmurf},
    it is possible to add all kinds of convenient properties to \vness\
     that preserve our intended application. An example of such a property is
    linearity of $\leq$ below $v$. 
\end{rem}

We say that $\sigma$ is a \textit{pure 1-$\Sigma_1$-sentence} if it is of the form
$\exists x\, \sigma_0(x)$, where $\sigma_0(x)$ is a pure $\Delta_0$-formula.

We strengthen Proposition \ref{pure} as follows:  

\begin{prop}\label{pure2}
For any $\Sigma_1$-sentence $\lambda$, a pure 1-$\Sigma_1$-sentence $\lambda^{\grullet}$ satisfying the following conditions can be effectively found: 
\begin{enumerate}
     \item $\N \models \lambda \leftrightarrow \lambda^\grullet$, 
	\item $\lambda^{\grullet} \to \lambda$ is logically valid. 
\end{enumerate}
\end{prop}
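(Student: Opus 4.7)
The plan is to build $\lambda^\grullet$ from $\lambda$ in two steps, cascading off Proposition~\ref{pure}. First, apply that proposition to obtain an equivalent pure $\Sigma_1$-sentence $\lambda^\circ = \exists x_1\cdots \exists x_n\, \varphi_0(x_1,\ldots,x_n)$ with $\varphi_0$ a pure $\Delta_0$-formula. Since $\lambda^\circ$ is a sentence and every pure $\Delta_0$-formula contains at least one free variable (as remarked after Proposition~\ref{pure}), the block must have length $n\geq 1$. Second, I would collapse this block of existentials into a single existential by introducing a uniform bound.

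If $n=1$, $\lambda^\circ$ is already pure 1-$\Sigma_1$ and I would set $\lambda^\grullet := \lambda^\circ$. Otherwise, I would define
\[
\lambda^\grullet \;:=\; \exists v\, \bigl((\exists x_1\bleq v)\cdots (\exists x_n\bleq v)\,\varphi_0(x_1,\ldots,x_n)\bigr).
\]
The only new atomic formulas introduced when the bounded quantifiers are expanded are of the form $x_i \leq v$, which are variable--variable and therefore respect the purity restrictions on $\leq$-atoms; the remaining atomic formulas are those of $\varphi_0$, which are already pure. Hence the matrix is a pure $\Delta_0$-formula in the single free variable $v$, so $\lambda^\grullet$ is a pure 1-$\Sigma_1$-sentence.

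For the two clauses: clause~(2), logical validity of $\lambda^\grullet \to \lambda$, follows by simply dropping the bounds to get $\lambda^\grullet \to \lambda^\circ$ and then composing with $\lambda^\circ \to \lambda$ from Proposition~\ref{pure}(2). Clause~(1) splits into two directions: $\lambda^\grullet\to\lambda$ already holds by~(2), and for the converse, if $\N\models\lambda$, Proposition~\ref{pure}(1) supplies witnesses $a_1,\ldots,a_n$ for $\varphi_0$, and then $v:=\max(a_1,\ldots,a_n)$ witnesses $\lambda^\grullet$. I do not anticipate a real obstacle; the only point that merits attention is checking that bounding by a single variable preserves the stringent shape constraints on atomic formulas, which it does because the bound is itself a variable.
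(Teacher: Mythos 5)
Your proposal is correct and matches the paper's own proof: both apply Proposition~\ref{pure} to obtain a pure $\Sigma_1$-sentence $\exists\vv v\,\lambda_0(\vv v)$, then collapse the block by setting $\lambda^\grullet := \exists x\,\exists\vv v\bleq x\,\lambda_0(\vv v)$ (the paper applies this uniformly, whereas you special-case $n=1$, which is an inessential optimization). Your verifications of purity and of clauses~(1) and~(2) are just a spelled-out version of what the paper leaves to the reader.
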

\begin{proof}
Consider any $\Sigma_1$-formula $\lambda$. 
By Proposition \ref{pure}, we can effectively find a pure $\Sigma_1$-formula ${\lambda}^{\circ}$ such that 
$\N \models \lambda \leftrightarrow \lambda^{\circ}$ and $\vdash \lambda^{\circ} \to \lambda$. 
Suppose that $\lambda^{\circ}$ is of the form $\exists \vv v\, \lambda_0(\vv v)$ for some pure $\Delta_0$-formula $\lambda_0(\vv v)$. 
Define $\lambda^{\grullet}$ to be the pure 1-$\Sigma_1$-sentence $\exists x\, \exists \vv v \leq x\, \lambda_0(\vv{v})$. 
Then, $\lambda^{\grullet}$ satisfies the conditions (1) and (2). 
\end{proof}

We are now ready to define certified $\Sigma_1$-sentences. 

\begin{defn}
Let $\sigma$ be a pure 1-$\Sigma_1$-sentence of the form $\exists x\, \sigma_0(x)$.
We define:
\[
\sigma^{\sq} :=	\exists x\, \bigl(\virt(x) \land \sigma_0(x)\bigr). 
\]
\end{defn} 
The following theorem is the heart of the technical part of our results.

\begin{thm}\label{QS}
Let $\sigma$ be a pure 1-$\Sigma_1$-sentence. Then:
\begin{enumerate}
    \item $\sigma^{\sq} \vdash \sigma$. 
    \item If $\N \models \sigma$, then $\RR_0 \vdash \sigma^{\sq}$. 
	\item If $\N \models \neg\, \sigma$, then $\sigma^{\sq} \vdash \RR_0$. 
\end{enumerate}
\end{thm}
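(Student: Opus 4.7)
For (1), I would simply drop the $\virt(x)$ conjunct from $\sigma^\sq$ to obtain $\sigma$. For (2), the plan is to apply Theorem~\ref{SCompl}: since $\virt(x)$ is (modulo predicate logic) $\Delta_0$, $\sigma^\sq$ is $\Sigma_1$; and because $\virt(n)$ holds of every natural number in $\N$, any natural-number witness of $\sigma_0$ in $\N$ is also a witness of $\virt(x) \land \sigma_0(x)$, so $\N \models \sigma^\sq$ and hence $\RR_0 \vdash \sigma^\sq$.

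The heart of the proof is (3), so assume now $\N \not\models \sigma$ and let $v$ denote the existentially quantified witness in $\sigma^\sq$. My core plan is to establish a Key Lemma: for every $k \in \N$, $\sigma^\sq \vdash \num{k} < v$. Granting it, each axiom of $\RR_0$ is derivable in $\sigma^\sq$ from $\virt(v)$ together with the facts $\num{m}, \num{n} \leq v$ supplied by the Lemma. Concretely, $\mathsf{R1}$ and $\mathsf{R2}$ would follow by direct computation with $\Ax{7}$--$\Ax{10}$, using the auxiliary identity $\num{m} \times \num{1} = \num{m}$ (itself derived from $\Ax{7}, \Ax{9}, \Ax{10}$); $\mathsf{R3}$ by iterated successor-stripping via $\Ax{6}$, with $\Ax{5}$ as the base non-zero case; and $\mathsf{R4}, \mathsf{R5'}$ by iterating $\Ax{3}$ and $\Ax{4}$.

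I would prove the Key Lemma by induction on $k$. The direction $\num{k} \leq v$ uses $\Ax{1}$ at the base and $\Ax{2}$ applied to the inductive hypothesis $\num{k-1} < v$ at the successor step. The direction $\num{k} \neq v$ is by contradiction: under the hypothesis $v = \num{k}$, the conjunct $\sigma_0(v)$ becomes the closed pure $\Delta_0$-sentence $\sigma_0(\num{k}) = \exists \vv{y} \leq \num{k}\, \lambda_0(\vv{y})$, which is false in $\N$. Using $\Ax{3}, \Ax{4}$ together with the injectivity $\num{i} \neq \num{j}$ for $i < j \leq k$ (obtained from $\Ax{5}, \Ax{6}$), I would unfold the bounded quantifier and rewrite $\sigma_0(\num{k})$ as a finite disjunction of closed pure $\Delta_0$-sentences $\lambda_0(\num{i_1}, \ldots, \num{i_r})$ indexed by tuples with $i_s \leq k$. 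Each atomic subformula of the form $\num{a} + \num{b} = \num{c}$ or $\num{a} \times \num{b} = \num{c}$ would then be decided inside $\virt(\num{k})$ by rewriting sums and products as successor-towers via $\Ax{7}$--$\Ax{10}$ and iteratively stripping successors by $\Ax{6}$. Since every disjunct is refuted in $\N$, this yields $\virt(\num{k}) \vdash \neg\, \sigma_0(\num{k})$, contradicting $\sigma_0(\num{k})$.

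The main obstacle I expect is carrying out the successor-stripping when intermediate arithmetic values such as $\num{a} + \num{b}$ exceed $\num{k} = v$. The observation that unblocks this is that $\Ax{6}$ quantifies only the indices $x, y, z, w$ by $v$ and not the resulting term $(x \times y) + z$; hence stripping from $\sbr{(x \times y) + z} = \s w$ is available whenever $x, y, z, w \leq v$, regardless of where $(x \times y) + z$ sits in the ambient model. Representing a sum $\num{a} + \num{b'}$ as $(\num{a} \times \num{1}) + \num{b'}$ (with all indices below $v$) and reducing via $\Ax{8}$ places equations in the form required by $\Ax{6}$, so successors can be peeled off one at a time until a visible inconsistency emerges---either $\num{a} = \num{c - b}$ with $a \neq c - b$, or $\num{a} + \num{b - c} = \num{0}$ refuted by $\Ax{5}$.
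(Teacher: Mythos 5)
Your proposal is correct in outline and takes a genuinely different route from the paper. The paper argues model-theoretically: it fixes an arbitrary model $\mathcal{N}$ of $\sigma^{\sq}$, uses Lemma~\ref{keukensmurf} to show that the designated witness $\virtob$ satisfies $\num{m} \neq \virtob$ for every standard $m$ (so that hypothesis~(\dag) holds with every parameter $k$), uses Lemma~\ref{vrolijkesmurf} to show that $\mathcal{N}$ then verifies every true $\Sigma_1$-sentence, and finally invokes the completeness theorem together with Theorem~\ref{SCompl} to conclude $\sigma^{\sq} \vdash \RR_0$. You propose instead to produce the derivations directly: your Key Lemma $\sigma^{\sq} \vdash \num{k} < v$ is the proof-theoretic counterpart of Lemma~\ref{keukensmurf}, and the refutation $\virt(\num{k}) \vdash \neg\,\sigma_0(\num{k})$ is the counterpart of Lemma~\ref{vrolijkesmurf}. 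You correctly identify the obstacle the paper handles with Lemmas~\ref{LemAM}--\ref{LemN} (numeral values that outgrow $v$ under $+$ and $\times$) and the same remedy (axioms $\Ax{5}$--$\Ax{8}$ constrain only the indices $x,y,z,w$ by $v$, not the value of $(x\times y)+z$, so successor-stripping applies without a bound on the stripped term). The paper's route is shorter to write because the bookkeeping is delegated to semantics and recovered by completeness; yours is more explicit and constructive, which the authors themselves note is possible in a footnote (``the witnessing proof can be directly read off from the proofs of the lemmas''). Two minor caveats. First, you assume $\sigma_0(x)$ has the specific shape $\exists \vv{y} \bleq x\, \lambda_0(\vv{y})$; that is the form delivered by Proposition~\ref{pure2}, but Theorem~\ref{QS} is stated for arbitrary pure 1-$\Sigma_1$-sentences, so $\sigma_0$ may be any pure $\Delta_0$-formula and the refutation step should be run by structural induction on $\sigma_0$ (as in Lemma~\ref{vrolijkesmurf}) rather than by a single bounded-quantifier unfolding. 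Second, you derive the $\RR_0$ axioms one by one instead of passing through Theorem~\ref{SCompl}; that is a stylistic rather than substantive difference, since $\Sigma_1$-completeness of $\sigma^{\sq}$ is exactly what deriving all the $\Delta_0$-rewritten $\RR_0$ axioms amounts to.
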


For each $\Sigma_1$-sentence $\lambda$, we may define $[\lambda]$ to be the sentence $(\lambda^\grullet)^{\sq}$.
Then, Theorem \ref{CSS} immediately follows from Theorem \ref{QS} and Proposition \ref{pure2}. 

Before proving Theorem \ref{QS}, we investigate an $\LA$-model $\mc M$.
We assume that
\begin{itemize}
    \item [(\dag)]: $\virtob$ is a
designated \virtu\ element and $k$ is a natural number such that, for all $m < k$, we have $\mc M \models \num{m} \neq \virtob$.
\end{itemize}
Our first lemma is concerned with successor.

\begin{lem}[\dag]\label{Lem1}
For each $m \leq k$, we have $\mc M \models \num{m} \leq \virtob$. 
\end{lem}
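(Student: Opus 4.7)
The plan is to prove this by induction on $m$ with $0 \leq m \leq k$, leveraging axioms $\Ax 1$ and $\Ax 2$ of \vness\ together with the hypothesis $(\dag)$.

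For the base case $m = 0$, axiom $\Ax 1$ applied to $\virtob$ gives directly $\mc M \models 0 \leq \virtob$, i.e., $\mc M \models \num 0 \leq \virtob$.

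For the inductive step, suppose $m < k$ and $\mc M \models \num m \leq \virtob$. By hypothesis $(\dag)$, since $m < k$, we have $\mc M \models \num m \neq \virtob$. Combining these two facts with the definition $x < y :\equiv x \leq y \land x \neq y$, we obtain $\mc M \models \num m < \virtob$. Instantiating axiom $\Ax 2$ at $\num m$ then yields $\mc M \models \s{\num m} \leq \virtob$, i.e., $\mc M \models \num{m+1} \leq \virtob$. This completes the induction.

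I do not expect any real obstacle here: the argument is essentially the observation that $\Ax 1$ and $\Ax 2$ together force the standard numerals below $\virtob$ to be an initial segment of elements at or below $\virtob$, and $(\dag)$ provides exactly the strict-inequality witnesses needed to trigger $\Ax 2$ at each step. The only subtlety is making sure the induction only goes up to $k$ (since for $m = k$ we no longer have $\num m \neq \virtob$ guaranteed, and indeed $\num k \leq \virtob$ is the final conclusion, not $\num{k+1} \leq \virtob$).
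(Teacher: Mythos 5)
Your proof is correct and takes essentially the same approach as the paper's: induction on $m$, with $\Ax 1$ for the base case, and $(\dag)$ plus $\Ax 2$ for the inductive step. The remark about why the induction stops at $k$ is a fine observation but not an issue the paper needed to flag.
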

\begin{proof}
We prove the lemma by induction on $m \leq k$. 
For $m = 0$, we have $\mc M \models 0 \leq \virtob$ by $\Ax{1}$. 
Suppose that the lemma holds for $m$ with $m + 1 \leq k$. 
Then, $\mc M \models \num{m} \leq \virtob$ by the induction hypothesis. 
Since $m< k$ and, hence, by Assumption (\dag), $\mc M \models \num{m} \neq \virtob$, we find $\mc M \models \num{m} < \virtob$.
So, by $\Ax{2}$, we may conclude that $\mc M \models \num{m+1} \leq \virtob$.
\end{proof}

\noindent
In the proofs of the following lemmas, we will use Lemma \ref{Lem1} without any reference. 

\begin{lem}[\dag]\label{LemL}
For any $m \leq k$, we have $\displaystyle \mc M \models \forall y\; \Bigl(y \leq \num{m} \leftrightarrow \bigvee_{l \leq m} y = \num{l} \Bigr)$. 
\end{lem}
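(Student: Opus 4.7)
The plan is to proceed by induction on $m \leq k$, mirroring the structure of the proof of Lemma~\ref{Lem1}.

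For the base case $m = 0$, axiom $\Ax{3}$ directly yields $\forall y\, (y \leq 0 \iff y = 0)$, and the disjunction $\bigvee_{l \leq 0} y = \num{l}$ is simply $y = \num{0}$, i.e.\ $y = 0$. So the base case reduces to $\Ax{3}$.

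For the inductive step, I would assume the equivalence holds for $m$, where $m + 1 \leq k$, and try to promote it using $\Ax{4}$ instantiated at $x = \num{m}$. The key prerequisite is the bounded-quantifier side condition $\num{m} \bles \virtob$. Here Lemma~\ref{Lem1} gives $\mc M \models \num{m} \leq \virtob$, and since $m + 1 \leq k$ entails $m < k$, Assumption~(\dag) gives $\mc M \models \num{m} \neq \virtob$, so indeed $\mc M \models \num{m} \bles \virtob$. Invoking $\Ax{4}$ then yields
\[
\mc M \models \forall y\, \bigl(y \leq \s\num{m} \iff (y \leq \num{m} \vee y = \s\num{m})\bigr).
\]
Since $\s\num{m}$ is, by definition of the numeral, syntactically identical to $\num{m+1}$, and since the induction hypothesis rewrites $y \leq \num{m}$ as $\bigvee_{l \leq m} y = \num{l}$, we conclude
\[
\mc M \models \forall y\, \Bigl(y \leq \num{m+1} \iff \bigvee_{l \leq m+1} y = \num{l}\Bigr),
\]
completing the induction.

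The proof is essentially routine; the only subtle point is checking the side condition $\num{m} \bles \virtob$ that licenses the application of $\Ax{4}$, and this is precisely what Lemma~\ref{Lem1} combined with Assumption~(\dag) was set up to deliver. I do not expect any genuine obstacle beyond bookkeeping.
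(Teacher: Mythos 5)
Your proof is correct and takes essentially the same route as the paper's: induction on $m$, base case from $\Ax{3}$, and the inductive step from $\Ax{4}$ instantiated at $x = \num{m}$, where the side condition $\num{m} \bles \virtob$ is obtained from Lemma~\ref{Lem1} together with Assumption~(\dag). The paper merely compresses the check of that side condition into the phrase ``we note that $m < k$, and hence $\mc M \models \num{m} < \virtob$'' (relying on the standing convention of using Lemma~\ref{Lem1} silently), which you have simply spelled out.
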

\begin{proof}
We prove our lemma by induction on $m\leq k$. The case $m=0$ is precisely $\Ax{3}$. Suppose we have our equivalence for
$m$ with $m+1 \leq k$. We note that $m<k$, and hence $\mc M \models \num{m} < \virtob$. Thus,
we have, by $\Ax{4}$: 
\begin{align*}
	\mc M \models y \leq \num{m+1} & \;\,\iff\;\, y \leq \s{\num{m}} \\
		& \;\,\iff\;\, y \leq \num{m} \lor y = \s{\num{m}} \\
		& \;\,\iff\;\, \bigvee_{l \leq m} y = \num{l}  \lor y = \num{m+1} \\
		& \;\,\iff\;\, \bigvee_{l \leq m+1} y = \num{l}. \qedhere
\end{align*}
\end{proof}

\begin{lem}[\dag]\label{LemAM}
For any $m, n, p \leq k$, we have $\mc M \models (\num{m} \times \num{n}) + \num{p} = \num{(m \times n) + p}$. 
\end{lem}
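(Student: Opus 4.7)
The plan is to proceed by a double induction: a primary induction on $n$, and within each fixed $n$, an auxiliary induction on $p$. Throughout, Lemma~\ref{Lem1} (applied to $m$, $n$, $p$, all at most $k$) supplies $\num{m}, \num{n}, \num{p} \leq \virtob$, which is exactly the side condition needed to instantiate the bounded axioms $\Ax{7}$--$\Ax{10}$ at these numerals.

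First I would dispose of the base case $n = 0$ by an inner induction on $p$. For $p = 0$, the steps are: apply $\Ax{7}$ to collapse $+\,\num{0}$, then $\Ax{9}$ to compute $\num{m} \times 0 = 0$, which matches the right-hand side $\num{(m \times 0) + 0} = \num{0}$. For the step from $p$ to $p + 1$, axiom $\Ax{8}$ pulls the outer successor outside, converting $(\num{m} \times \num{0}) + \s{\num{p}}$ into $\sbr{(\num{m} \times \num{0}) + \num{p}}$, after which the inner IH finishes the job.

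The substance lies in the step from $n$ to $n + 1$. Again I split on $p$. When $p = 0$, I would first apply $\Ax{7}$ to drop $+\,\num{0}$, then rewrite $\num{n+1}$ as $\s{\num{n}}$ and invoke $\Ax{10}$ to obtain $(\num{m} \times \num{n}) + \num{m}$; the main IH at level $n$, instantiated with the third parameter set to $m$ (which is legal since $m \leq k$), then delivers $\num{(m \times n) + m} = \num{m \times (n + 1)}$, as required. When $p$ advances to $p + 1$, the argument is again the familiar $\Ax{8}$-plus-inner-IH combination.

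The one point that requires care is the ordering of the inductions. In the subcase where $n$ advances and $p = 0$, I need the lemma at level $n$ for the \emph{specific} third-parameter value $p := m$, not merely for the $p$ currently under consideration. Arranging the primary induction so that the entire $p$-strip at level $n$ is completed before level $n + 1$ begins gives exactly this, and no circularity arises.
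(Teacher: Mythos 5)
Your proof is correct and follows the same route as the paper's: an outer induction on $n$, an inner induction on $p$, with $\Ax{7}$ and $\Ax{9}$ clearing the base case, $\Ax{8}$ driving every $p$-step, and $\Ax{10}$ plus the full $n$-level induction hypothesis (instantiated at third parameter $m$) handling the $n$-step at $p=0$. Your explicit remark about why the outer induction hypothesis must be available for all $p \leq k$, not just the current $p$, is exactly the right thing to check and is handled implicitly in the paper's phrasing "the induction hypothesis for $n$."
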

\begin{proof}
We prove the lemma by induction on $n$. 
For $n = 0$, we prove $\mc M \models (\num{m} \times 0) + \num{p} = \num{p}$ by induction on $p$. 
For $p = 0$, we see that $\mc M \models (\num{m} \times 0) + 0 = \num{m} \times 0$ holds by $\Ax{7}$. 
Since $\mc M \models \num{m} \times 0 = 0$ by $\Ax{9}$, we obtain $\mc M \models (\num{m} \times 0) + 0 = 0$. 

Suppose that the statement holds for $p$ with $p + 1 \leq k$. 
Then, by $\Ax{8}$ and the induction hypothesis for $p$, we get
\[
	\mc M \models (\num{m} \times 0) + \num{(p+1)} = (\num{m} \times 0) + \s \num{p} = \sbr{(\num{m} \times 0) + \num{p}} = \s\num{p} = \num{p+1}. 
\]
We have proved that the lemma holds for $n = 0$. 

Suppose that the lemma holds for $n$ with $n + 1 \leq k$. 
We prove $\mc M \models (\num{m} \times (\num{n+1})) + \num{p} = \num{(m \times (n+1)) + p}$ by induction on $p$. 
For $p = 0$, by $\Ax{7}$, $\Ax{10}$, and the induction hypothesis for $n$, using that $m\leq k$,
\begin{eqnarray*}
\mc	M \models (\num{m} \times (\num{n+1})) + 0 & = & \num{m} \times \s\num{n} \\
& = & (\num{m} \times \num{n}) + \num{m}\\
	&  = & \num{(m \times n) + m} \\
 & = & \num{(m \times (n+1)) + 0}. 
\end{eqnarray*}

Assume that the statement holds for $p$ with $p + 1 \leq k$. 
By $\Ax{8}$ and the induction hypothesis for $p$, 
%\qedright
\begin{eqnarray*}
\mc	M \models (\num{m} \times (\num{n+1})) + (\num{p+1}) & = &  (\num{m} \times (\num{n+1})) + \s\num{p}\\
 &=& \sbr{(\num{m} \times (\num{n+1})) + \num{p}}\\
	& = & \sbr{(\num{m \times (n+1)) + p}}\\ &=& \num{(m \times (n+1)) + (p+1)}. \hspace*{2cm}\qedhere
\end{eqnarray*}
\end{proof}

\begin{lem}[\dag]\label{LemAM2}
Let $m, n \leq k$. 
\begin{enumerate}
	\item $\mc M \models \num{m} \times \num{n} = \num{m \times n}$, 
	\item $\mc M \models \num{m} + \num{n} = \num{m + n}$. 
\end{enumerate}
\end{lem}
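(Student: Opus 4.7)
The plan is to deduce both parts of Lemma~\ref{LemAM2} directly from Lemma~\ref{LemAM} by specialising its parameters and then stripping the resulting $+\,0$ or $\times\,\num 1$ by means of axioms $\Ax 7$ and $\Ax 9$. Throughout, Lemma~\ref{Lem1} guarantees that every numeral $\num i$ with $i \leq k$ satisfies $\num i \leq \virtob$, so the bounded axioms $\Ax 5$--$\Ax{10}$ apply freely to all numerals that appear.

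For part (1), I would instantiate Lemma~\ref{LemAM} at $p := 0$ — legal since $0 \leq k$ — to obtain $\mc M \models (\num m \times \num n) + 0 = \num{(m \times n) + 0} = \num{m \times n}$, and then use $\Ax 7$ to absorb the $+\,0$ on the left, yielding $\num m \times \num n = \num{m \times n}$.

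For part (2), my strategy is to rewrite $\num m + \num n$ in the form $(\num a \times \num b) + \num c$ so that Lemma~\ref{LemAM} can do the computation. This forces a small case split on $m$, because the natural choice $(a,b,c) := (m,1,n)$ requires $1 \leq k$, which need not hold when $k = 0$. When $m \geq 1$, one automatically has $k \geq 1$ since $m \leq k$, and I would combine Lemma~\ref{LemAM} on $(m,1,n)$ with part (1) applied to $\num m \times \num 1$ to obtain $\num m + \num n = (\num m \times \num 1) + \num n = \num{m \cdot 1 + n} = \num{m+n}$. When $m = 0$, I would instead take $(a,b,c) := (0,0,n)$ and dispose of $\num 0 \times \num 0$ via $\Ax 9$, producing $\num 0 + \num n = \num n = \num{0 + n}$.

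The one mildly subtle point is precisely this case split: Lemma~\ref{LemAM} demands all three of its parameters lie $\leq k$, so the standard ``$\num m = \num m \times \num 1$'' rewrite is not uniformly legal when $k = 0$, and we have to sidestep it by using $0$ in place of $1$ in the degenerate branch. Beyond that, no genuine obstacle remains; what is left is the bookkeeping of applying $\Ax 7$, $\Ax 9$, Lemma~\ref{Lem1}, and part (1) at the appropriate places.
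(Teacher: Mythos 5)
Your proposal is correct and takes essentially the same approach as the paper: deduce both parts from Lemma~\ref{LemAM} by instantiating it suitably and then stripping the extra $+\,0$ or $\times\,\num{1}$ via $\Ax{7}$ and $\Ax{9}$, with a small case split because $\num 1 \leq \virtob$ is only available when $k \geq 1$. The only (cosmetic) difference is that you split on $m = 0$ versus $m \geq 1$, whereas the paper splits on $k = 0$ versus $k \geq 1$; both organize the same computation.
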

\begin{proof}
1. By Lemma \ref{LemAM}, we get $\mc M \models (\num{m} \times \num{n}) + 0 = \num{m \times n}$. 
By $\Ax{7}$, we obtain $\mc M \models \num{m} \times \num{n} = \num{m \times n}$. 

2. If $k = 0$, then $m = n = 0$. 
By $\Ax{7}$, $\mc M \models (0 \times 0) + 0 = 0 \times 0$. 
Since $\mc M \models 0 \times 0 = 0$ by $\Ax{9}$, we obtain $\mc M \models 0 + 0 = 0$. 

If $k \geq 1$, then by Lemma \ref{LemAM} and Clause 1, we obtain 
\[
	\mc M \models \num{m} + \num{n} = (\num{m} \times \num{1}) + \num{n} = \num{m + n}. \qedhere
\]
\end{proof}

\begin{lem}[\dag]\label{LemN}
Suppose $m \leq k^2+k$ and $l\leq k$ and $m\neq l$. Then,
 $\mc M \models \num{m} \neq \num{l}$. 
\end{lem}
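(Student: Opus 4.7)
My plan is to prove the lemma by induction on $l$. The key combinatorial input used at every step is that every $n \leq k^2+k$ admits a decomposition $n = qk+r$ with $q, r \leq k$: for $k = 0$ the whole statement is vacuous, and for $k \geq 1$ the values of $qk+r$ with $q, r \in \{0, \ldots, k\}$ exhaust $\{0, 1, \ldots, k^2+k\}$. Combined with Lemma~\ref{LemAM}, this yields $\mc M \models (\num q \times \num k)+\num r = \num n$, and by Lemma~\ref{Lem1} all numerals involved are $\leq \virtob$. Consequently, every $\num n$ with $n \leq k^2+k$ is internally presented in the form $(x \times y)+z$ with $x, y, z \leq \virtob$, which is exactly the shape demanded by axioms $\Ax 5$ and $\Ax 6$.

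For the base case $l = 0$, I want $\mc M \models \num m \neq 0$ for any $m \leq k^2+k$ with $m \geq 1$. I apply the decomposition to $m - 1 \leq k^2+k$ to obtain $\mc M \models \num m = \s{(\num q \times \num k)+\num r}$ for suitable $q, r \leq k$, and then invoke $\Ax 5$ directly. For the inductive step $l \geq 1$, the case $m = 0$ reduces to the induction hypothesis at $l' = 0$ applied to $m' = l$ (which is nonzero and at most $k \leq k^2+k$). For $m \geq 1$, I suppose toward contradiction that $\mc M \models \num m = \num l$, that is, $\mc M \models \s{\num{m-1}} = \s{\num{l-1}}$. Representing $\num{m-1}$ as $(\num q \times \num k)+\num r$ via the decomposition of $m-1$, and noting that $\num{l-1} \leq \virtob$ by Lemma~\ref{Lem1}, axiom $\Ax 6$ delivers $\mc M \models (\num q \times \num k)+\num r = \num{l-1}$, hence $\mc M \models \num{m-1} = \num{l-1}$. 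Since $m-1 \leq k^2+k$, $l-1 \leq k$, and $m-1 \neq l-1$, the induction hypothesis at $l-1$ yields the desired contradiction.

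The only delicate point is matching the precise shape required by $\Ax 5$ and $\Ax 6$: the left-hand side must literally be $\s{(x \times y)+z}$ with all of $x, y, z$ bounded by $\virtob$. The decomposition $n = qk+r$ with $q, r \leq k$, together with Lemma~\ref{LemAM} to internalize the equation inside $\mc M$, is exactly the tool that produces such a presentation uniformly for every $\num n$ with $n \leq k^2+k$. All remaining steps are bookkeeping about the ranges of $m-1$ and $l-1$, which stay within the hypotheses of the induction.
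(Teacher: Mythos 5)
Your proof is correct and follows essentially the same strategy as the paper's: induction on $l$, the decomposition of every $m \leq k^2+k$ as a product-plus-remainder with both parts bounded by $k$, Lemma~\ref{LemAM} to internalize that decomposition, $\Ax 5$ for the base case, and $\Ax 6$ for the inductive step. The only cosmetic differences are that you reduce the $m=0$ subcase of the inductive step to the already-established base case (where the paper reinvokes $\Ax 5$ directly) and that you write the decomposition as $qk+r$ rather than $(k \times m_0)+m_1$.
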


\begin{proof}
    We prove our result by induction on $l\leq k$. We will use the fact that (\ddag) every $m\leq k^2+k$, can be written as $(k\times m_0)+m_1$ for some $m_0,m_1\leq k$.
    
    For the base case, suppose $l =0$, $m \leq k^2+k$, and $m\neq l$.
    We have $m = m'+1$, so $\num{m} = \s \num{m'}$. So we are done by \Ax{5} in combination with (\ddag) and Lemma~\ref{LemAM}.
    
    We treat the successor case. Suppose $l= l'+1$ and we have the desired result for $l'$. Suppose also $m \leq k^2+k$, and $m\neq l$. In case $m=0$, we are  done by 
    \Ax{5} in combination with (\ddag) and Lemma~\ref{LemAM}. Suppose, $m = m' + 1$. We write $m'= (k\times m_0)+m_1$, where $m_0,m_1\leq k$.
    Suppose $\mc M \models \num{m} = \num{l}$. Then, by Lemma~\ref{LemAM}, we find $\mc M \models \s ((\num{k}\times \num{m_0})+\num{m_1}) = \s \num{l'}$.
    By \Ax{6}, we may conclude $\mc M \models (\num{k}\times \num{m_0})+\num{m_1} =  \num{l'}$. But this contradicts the induction hypothesis. 
\end{proof}

\begin{lem}[\dag]\label{vrolijkesmurf}
For any pure $\Delta_0$-formula $\varphi(x_0, \ldots, x_{i})$, and $n_0, \ldots, n_{i} \leq k$, 
 if $\N \models \varphi(\num{n_0}, \ldots, \num{n_i})$, then $\mc M \models \varphi(\num{n_0}, \ldots, \num{n_i})$. 
\end{lem}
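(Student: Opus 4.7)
The plan is to strengthen the statement to the biconditional $\N \models \varphi(\num{n_0},\ldots,\num{n_i}) \iff \mc M \models \varphi(\num{n_0},\ldots,\num{n_i})$, since pure $\Delta_0$-formulas are closed under negation and a one-directional induction does not go through the negation cases cleanly. The argument is then by induction on the structure of the pure $\Delta_0$-formula $\varphi$.

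For the atomic cases, I would exploit that in a pure $\Delta_0$-formula every atomic subformula mentions only variables (not nested terms), so after substitution of numerals $\num{n_0},\ldots,\num{n_i}$ with $n_j\leq k$ the atoms are of the shape $\num{n_a} = \num{n_b}$, $0 = \num{n_a}$, $\s\num{n_a} = \num{n_b}$, $\num{n_a}+\num{n_b}=\num{n_c}$, $\num{n_a}\times\num{n_b}=\num{n_c}$, or $\num{n_a}\leq \num{n_b}$. The positive side of each $=$-atom reduces, via Lemmas \ref{LemAM} and \ref{LemAM2}, to computing the value in $\N$ and identifying it with the numeral given on the right-hand side; the negative side (i.e., that distinct numerals remain distinct in $\mc M$) is exactly Lemma \ref{LemN}. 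The $\leq$-atom is handled by Lemma \ref{LemL}: $\mc M\models \num{n_a}\leq \num{n_b}$ iff $\mc M$ satisfies $\bigvee_{l\leq n_b} \num{n_a}=\num{l}$, which by the equality case reduces to $n_a\in\{0,\ldots,n_b\}$.

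Propositional connectives are immediate from the induction hypothesis. For a bounded quantifier $\forall y\leq x_j\,\psi$, after substitution we have $\forall y \leq \num{n_j}\,\psi(y,\num{\vv n})$ with $n_j\leq k$; by Lemma \ref{LemL} the $y$ ranging over $\mc M$ with $y\leq \num{n_j}$ are exactly the numerals $\num 0,\ldots,\num{n_j}$, so the truth of the bounded quantifier in $\mc M$ is equivalent to $\mc M \models \psi(\num{l},\num{\vv n})$ for every $l\leq n_j\leq k$. By the induction hypothesis (applied to the smaller formula $\psi$ and the list $n_0,\ldots,n_i,l$, all of which are $\leq k$), this is equivalent to $\N$ satisfying the same conjunction, which is equivalent to $\N\models \forall y\leq \num{n_j}\,\psi$. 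The $\exists$-case is symmetric.

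The main delicacy is purely bookkeeping: one has to check that every intermediate numeral $\num{m}$ used as the witness of a positive atom (so that its inequality with the supposed value can be refuted via Lemma \ref{LemN}) has index $m\leq k^2+k$. This holds in all cases, since sums of two numerals $\leq k$ have index $\leq 2k$, products have index $\leq k^2$, and successor pushes $n_a\leq k$ to $n_a+1\leq k+1$, all of which fit within $k^2+k$ as soon as $k\geq 1$. The edge case $k=0$ collapses all substitutions to $0$, so the only non-trivial negative atom is $\s 0\neq 0$, which follows directly from $\Ax{5}$ with $x=y=z=0$ together with $\Ax{7}$ and $\Ax{9}$, without recourse to Lemma \ref{LemN}.
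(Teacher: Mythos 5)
Your proposal is correct and follows essentially the same route as the paper: the paper also strengthens the statement to handle both directions simultaneously (stated as two clauses rather than a biconditional, which is the same thing by excluded middle over $\N$), discharges the positive and negative atomic cases via Lemmas~\ref{LemAM}, \ref{LemAM2}, \ref{LemN}, and \ref{LemL} exactly as you do, performs the same $k^2+k$ bookkeeping, singles out the same $k=0$ edge case for the successor atom using $\Ax{5}$, $\Ax{7}$, $\Ax{9}$, and treats the bounded quantifiers via Lemma~\ref{LemL} in the same way.
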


\begin{proof}  
For any pure $\Delta_0$-formula $\varphi(x_0, \ldots, x_{i})$, and $n_0, \ldots, n_{i} \leq k$, we simultaneously 
prove the following two clauses by induction on the construction of $\varphi$:
\begin{enumerate}
    \item if $\N \models \varphi(\num{n_0}, \ldots, \num{n_i})$, then $\mc M \models \varphi(\num{n_0}, \ldots, \num{n_i})$, 
    \item if $\N \models \neg \, \varphi(\num{n_0}, \ldots, \num{n_i})$, then $\mc M \models \neg \, \varphi(\num{n_0}, \ldots, \num{n_i})$. 
\end{enumerate}

Firstly, we prove that the statement holds for atomic formulas. 
We distinguish the following cases. 
\begin{itemize}
	\item $\varphi$ is of the form $x_0 = x_1$. \\
	1. If $\N \models \num{n_0} = \num{n_1}$, then $n_0 = n_1$, and hence $\mc M \models \num{n_0} = \num{n_1}$. 

    2. If $\N \models \num{n_0} \neq \num{n_1}$, then $n_0 \neq n_1$. 
	By Lemma \ref{LemN}, $\mc M \models \num{n_0} \neq \num{n_1}$ because $n_0, n_1 \leq k$. 

	\item $\varphi$ is of the form $0 = x_0$. \\
	1. If $\N \models 0 = \num{n_0}$, then $0 = n_0$, and hence $\mc M \models 0 = \num{n_0}$. 

    2. If $\N \models 0 \neq \num{n_0}$, then $0 \neq n_0$. 
	By Lemma \ref{LemN}, $\mc M \models 0 \neq \num{n_0}$ because $n_0 \leq k$. 

	\item $\varphi$ is of the form $\s{x_0} = x_1$. \\
	1. If $\N \models \s{\num{n_0}} = \num{n_1}$, then $n_0 + 1 = n_1$, and $\mc M \models \num{n_0 + 1} = \num{n_1}$. 
	This means $\mc M \models \s{\num{n_0}} = \num{n_1}$. 

	2. If $\N \models \s{\num{n_0}} \neq \num{n_1}$, then $n_0 + 1 \neq n_1$. 
	If $k = 0$, then $n_0 = n_1 = 0$. 
	By $\Ax{5}$, we get $\mc M \models \sbr{(0 \times 0) + 0} \neq 0$. 
	By Lemma \ref{LemAM}, $\mc M \models (0 \times 0) + 0 = 0$, and hence $\mc M \models \num{1} \neq 0$. 
	This means $\mc M \models \s{\num{n_0}} \neq \num{n_1}$. 
	If $k \geq 1$, then we have $n_0 + 1 \leq k^2 + k$. 
    By Lemma \ref{LemN}, we have $\mc M \models \s{\num{n_0}} \neq \num{n_1}$. 

	\item $\varphi$ is of the form $x_0 + x_1 = x_2$. \\
	1. If $\N \models \num{n_0} + \num{n_1} = \num{n_2}$, then $n_0 + n_1 = n_2$, and $\mc M \models \num{n_0 + n_1} = \num{n_2}$. 
	By Lemma \ref{LemAM2}, $\mc M \models \num{n_0} + \num{n_1} = \num{n_2}$ because $n_0, n_1 \leq k$. 

	2. If $\N \models \num{n_0} + \num{n_1} \neq \num{n_2}$, then $n_0 + n_1 \neq n_2$. 
	Since $n_0 + n_1 \leq k^2 + k$ and $n_2 \leq k$, by Lemma \ref{LemN}, we have $\mc M \models \num{n_0 + n_1} \neq \num{n_2}$. 
	By Lemma \ref{LemAM2}, $\mc M \models \num{n_0} + \num{n_1} \neq \num{n_2}$. 
	
	\item $\varphi$ is of the form $x_0 \times x_1 = x_2$. \\
	1. If $\N \models \num{n_0} \times \num{n_1} = \num{n_2}$, then $n_0 \times n_1 = n_2$, and $\mc M \models \num{n_0 \times n_1} = \num{n_2}$. 
	By Lemma \ref{LemAM2}, $\mc M \models \num{n_0} \times \num{n_1} = \num{n_2}$ because $n_0, n_1 \leq k$. 

	2. If $\N \models \num{n_0} \times \num{n_1} \neq \num{n_2}$, then $n_0 \times n_1 \neq n_2$. 
	By Lemma \ref{LemN}, $\mc M \models \num{n_0 \times n_1} \neq \num{n_2}$ because $n_0 \times n_1 \leq k^2 + k$ and $n_2 \leq k$. 
	By Lemma \ref{LemAM2}, $\mc M \models \num{n_0} \times \num{n_1} \neq \num{n_2}$. 
	
	\item $\varphi$ is of the form $x_0 \leq x_1$. \\
	1. If $\N \models \num{n_0} \leq \num{n_1}$, then $n_0 \leq n_1$. 
	Since $\mc M \models \bigvee_{l \leq n_1} \num{n_0} = \num{l}$, we have, by Lemma \ref{LemL}, 
 $\mc M \models \num{n_0} \leq \num{n_1}$ because $n_1 \leq k$. 

	2. If $\N \models \num{n_0} \not \leq \num{n_1}$, then $n_1 < n_0$. 
	For each $l \leq n_1$, we have $n_0 \neq l$. 
	Since $n_0, l \leq k$, we have $\mc M \models \num{n_0} \neq \num{l}$ by Lemma \ref{LemN}. 
	Then, $\mc M \models \bigwedge_{l \leq n_1} \num{n_0} \neq \num{l}$. 
	Since $n_1 \leq k$, by Lemma \ref{LemL}, we have $\mc M \models \num{n_0} \not \leq \num{n_1}$. 
\end{itemize}

Secondly, we prove the induction steps. 
The case that $\varphi$ is one of the forms $\neg \, \varphi_0$ and $\varphi_0 \ast \varphi_1$ for $\ast \in \{\land, \lor, \to\}$ is easily shown by the induction hypothesis. 
It suffices to show the case that $\varphi$ is of the form $\exists y \leq x_j\, \varphi_0(x_0, \ldots, x_i, y)$, where the claim holds for $\varphi_0$. 
The case that $\varphi$ is of the form $\forall y \leq x_j\, \varphi_0(x_0, \ldots, x_i, y)$ is proved similarly. 

	1. Suppose $\N \models \exists y \bleq \num{n_j}\; \varphi_0(\num{n_0}, \ldots, \num{n_i}, y)$. 
	Then, there exists an $n_{i+1} \leq n_j \leq k$ such that $\N \models \varphi_0(\num{n_0}, \ldots, \num{n_i}, \num{n_{i+1}})$. 
	By the induction hypothesis, $\mc M \models \varphi_0(\num{n_0}, \ldots, \num{n_i}, \num{n_{i+1}})$. 
	Also, we have already proved $\mc M \models \num{n_{i+1}} \leq \num{n_j}$. 
	Therefore, we obtain $\mc M \models \exists y \bleq \num{n_j}\, \varphi_0(\num{n_0}, \ldots, \num{n_i}, y)$. 
	
	2. Suppose we have $\N \models \neg \, \exists y \bleq \num{n_j}\, \varphi_0(\num{n_0}, \ldots, \num{n_i}, y)$, or, equivalently, 
    $\N \models \forall y \bleq \num{n_j}\, \neg \, \varphi_0(\num{n_0}, \ldots, \num{n_i}, y)$. 
	Then, we have $\N \models \neg\, \varphi_0(\num{n_0}, \ldots, \num{n_i}, \num{l})$, for each $l \leq n_j$. 
	By the induction hypothesis, we have $\mc M \models \neg\, \varphi_0(\num{n_0}, \ldots, \num{n_i}, \num{l})$ for each $l \leq n_j$. 
	Thus, \[\mc M \models \forall y \Bigl(\bigvee_{l \bleq n_j} y = \num{l} \to \neg \, \varphi_0(\num{n_0}, \ldots, \num{n_i}, y) \Bigr).\] 
	By Lemma \ref{LemL}, we obtain $\mc M \models \forall y \bleq \num{n_j}\, \neg \, \varphi_0(\num{n_0}, \ldots, \num{n_i}, y)$, 
 and, hence, $\mc M \models \neg \, \exists y \bleq \num{n_j}\, \varphi_0(\num{n_0}, \ldots, \num{n_i}, y)$. 
\end{proof}

We have finished our investigation of the model $\mc M$. 
We prove one further lemma as a bridge between the conditions on $\mc M$ of the preceding lemmas and a false $\Sigma_1$-sentence
and, then, we are ready for the proof of Theorem \ref{QS}.

\begin{lem}\label{keukensmurf}
Let $\sigma$ be a pure 1-$\Sigma_1$-sentence of the form $\exists x \, \sigma_0(x)$, where $\sigma_0(x)$ is a pure $\Delta_0$-formula. 
    Suppose $\N \models \neg\,\sigma$. 
    Let $\mc N$ be an $\LA$-model and $\virtob \in \mc N$ be a witness of $\sigma^{\sq}$ in $\mc N$, that is, 
    $\mc N \models \virt(\virtob) \wedge \sigma_0(\virtob)$.
    Then $\mc N \models \num{m} < \virtob$, for every $m\in \omega$.
\end{lem}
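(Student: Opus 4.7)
The plan is to argue by contrapositive. Suppose, towards a contradiction, that some $m \in \omega$ fails to satisfy $\mc N \models \num{m} < \virtob$, and let $k$ be the least such $m$. By minimality of $k$, for every $m' < k$ we have $\mc N \models \num{m'} < \virtob$, and so in particular $\mc N \models \num{m'} \neq \virtob$. Thus the triple $(\mc N, \virtob, k)$ satisfies the assumption~(\dag) of the preceding string of lemmas.

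I would then apply Lemma~\ref{Lem1} under~(\dag) to obtain $\mc N \models \num{k} \leq \virtob$. Combined with $\mc N \not\models \num{k} < \virtob$ and the definition $x < y := x \leq y \wedge x \neq y$, this forces $\mc N \models \num{k} = \virtob$. Since $\virtob$ witnesses $\sigma^{\sq}$ in $\mc N$, we have $\mc N \models \sigma_0(\virtob)$, and hence $\mc N \models \sigma_0(\num{k})$.

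Finally, I would transfer this truth back to the standard model. The proof of Lemma~\ref{vrolijkesmurf} in fact establishes, by simultaneous induction on $\varphi$, both a positive and a negative transfer clause; its contrapositive gives that whenever the parameters are bounded by $k$, $\mc N \models \varphi$ implies $\N \models \varphi$. Applying this with $\varphi := \sigma_0$ and the parameter $\num{k}$ yields $\N \models \sigma_0(\num{k})$, hence $\N \models \exists x\, \sigma_0(x)$, contradicting $\N \models \neg\, \sigma$. The one subtlety, and the main thing to notice, is that $k$ must be chosen precisely as this minimal counterexample in order for~(\dag) to hold at exactly the strength required to substitute $\num{k}$ into $\sigma_0$ via Lemma~\ref{vrolijkesmurf}; once that alignment is identified, the rest is a direct invocation of the machinery already assembled.
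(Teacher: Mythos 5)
Your proof is correct and takes essentially the same approach as the paper's: both identify a least $k$ with $\num{k}$ equal to $\virtob$ (you reach it via failure of $<$ plus Lemma~\ref{Lem1}, the paper assumes the equality directly), observe that (\dag) then holds, and apply Lemma~\ref{vrolijkesmurf} to get a contradiction with $\N \models \neg\,\sigma$. Your invocation of ``the contrapositive'' of the negative transfer clause of Lemma~\ref{vrolijkesmurf} is exactly the paper's direct use of that clause, and the final step using Lemma~\ref{Lem1} to pass from $\neq$ to $<$ appears in both arguments, just in a slightly different order.
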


\begin{proof}
    Suppose $\N \models \neg\,\sigma$ and 
    $\mc N \models \virt(\virtob) \wedge \sigma_0(\virtob)$.

We first show that, for every $m \in \omega$, we have $\mc N \models \num{m} \neq \virtob$. 
Suppose $\mc N \models \num{m} =\virtob$, for some $m \in \omega$. 
Let $m^\star$ be the least such $m$. 
Then, the condition (\dag) holds for $\mc N$, $\virtob$, and $m^\star$. 
Since $\N \models \neg \, \sigma_0(\num{m}^\star)$, we have, by Lemma~\ref{vrolijkesmurf},
$\mc N \models \neg \, \sigma_0(\num{m}^\star)$. 
But this is impossible.

  Since, for every $m$, we have $\mc M \models \num{m} \neq \virtob$, Lemma~\ref{Lem1} gives us that,
  for every $m$, we have $\mc M \models \num{m} < \virtob$.
\end{proof}

We  now prove Theorem \ref{QS}. 

\begin{proof}[Proof of Theorem \ref{QS}]
\emph{Ad 1:} The implication from $\sigma^{\sq}$ to $\sigma$ is immediate.

\medskip
\emph{Ad 2:} It is obvious that $\N \models \forall v\, \virt(v)$ holds. 
Suppose $\N \models \sigma$. It follows that $\N \models \sigma^{\sq}$.
Since $\sigma^{\sq}$ is $\Sigma_1$, we have, by $\Sigma_1$-completeness (Theorem \ref{SCompl}),  
that  $\RR_0 \vdash \sigma^{\sq}$. 

\medskip
\emph{Ad 3:} Suppose $\N \models \neg\,\sigma$. We prove that the theory $\sigma^{\sq}$ is $\Sigma_1$-complete. 

Let $\mc N$ be any $\LA$-model of $\sigma^{\sq}$. 
So, for some $\virtob$, we have
$\mc N \models \virt(\virtob) \wedge \sigma_0(\virtob)$. 
By Lemma~\ref{keukensmurf}, we find that $\mc N \models \num{m} \neq \virtob$ for every $m \in \omega$.
Hence, the condition (\dag) holds for $\mc N$, $\virtob$, and all $k \in \omega$.

Let $\psi$ be any $\Sigma_1$-sentence such that $\N \models \psi$. 
By Proposition~\ref{pure2}, there exists a pure $\Delta_0$-formula $\delta(x)$ satisfying the following conditions: 
\begin{enumerate}
	\item $\N \models \psi \leftrightarrow \exists x \, \delta(x)$, 
	\item $\exists x \, \delta(x) \to \psi$ is logically valid. 
\end{enumerate}
Then, $\N \models \exists x \, \delta(x)$, and, hence,
$\N \models \delta(\num{n})$ for some $n$. 
By Lemma~\ref{vrolijkesmurf}, we have $\mc N \models \delta(\num{n})$. 
By the completeness theorem, we obtain $\sigma^{\sq} \vdash \delta(\num{n})$.\footnote{In fact, the witnessing proof can be directly read off from the proofs of the lemmas.} 
Thus, $\sigma^{\sq} \vdash \exists x \, \delta(x)$. 
Therefore, we obtain $\sigma^{\sq} \vdash \psi$. 

Finally, by Theorem \ref{SCompl}, we conclude that $\sigma^{\sq} \vdash \RR_0$. 
\end{proof}

\section{\cert\ Generalized}\label{hoempapasmurf}
In this section, we generalize Theorem \ref{CSS} to a wide class of further base theories.

\subsection{The Theory $\RP$}
We start with reproving Theorem~\ref{CSS} for a slightly different base theory. We define:
\begin{itemize}
    \item 
    $\LAP$ is the arithmetical signature $\LA$ extended by a unary predicate symbol {\sf P}.
    \item
    $\idap:= \idax{\LAP}$. (The notion $\idax{}$ is explained in Definition~\ref{iddef}.)
    \item
    $\RP$ is the the $\LAP$-theory obtained by extending $\RR_0$ with the  axioms
    ${\sf P}(\num{n})$, for all $n\in \omega$.
\end{itemize}

We have:
\begin{thm}[Second Certified Extension Theorem]\label{CSS0}
For each $\Sigma_1$-sentence $\sigma$, we can effectively find a sentence $\certt\sigma$ satisfying the following conditions:
\begin{enumerate}[1.]
    \item  $\certt\sigma \vdash  \sigma$. 
    \item If $\N \models \sigma$, then $\RP \vdash \certt\sigma$. 
    \item If $\N \models \neg \, \sigma$, then $\certt\sigma \vdash \RP$. 
\end{enumerate}
\end{thm}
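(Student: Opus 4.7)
The plan is to strengthen the certificate used in Theorem~\ref{QS} by additionally requiring every numeral below the witness to lie in $\sf P$. For a pure 1-$\Sigma_1$-sentence $\sigma = \exists x\, \sigma_0(x)$, we set
\[
\certt\sigma := \exists x\, \bigl(\virt(x) \wedge \forall y \leq x\, {\sf P}(y) \wedge \sigma_0(x)\bigr),
\]
and for an arbitrary $\Sigma_1$-sentence one first passes to the pure 1-$\Sigma_1$-equivalent provided by Proposition~\ref{pure2} and then applies the above definition. Clause~(1) is immediate, since the first and third conjuncts already yield $\sigma$, just as for $\sigma^\sq$.

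For Clause~(2), suppose $\N \models \sigma$, so that $\N \models \sigma_0(\num n)$ for some $n$. Since $\virt(x)$ and $\sigma_0(x)$ are pure $\Delta_0$ $\LA$-formulas (modulo logical equivalence), the instances $\virt(\num n)$ and $\sigma_0(\num n)$ are true $\Sigma_1$ $\LA$-sentences, and hence provable in $\RR_0 \subseteq \RP$ by Theorem~\ref{SCompl}. The middle conjunct $\forall y \leq \num n\, {\sf P}(y)$ is not an $\LA$-sentence, so $\Sigma_1$-completeness does not apply directly; however, axiom $\mathsf{R4}$ gives $\RR_0 \vdash \forall y\leq \num n\, \bigvee_{i\leq n} y = \num i$, and combining this with the $\RP$-axioms ${\sf P}(\num i)$ for $i \leq n$ yields $\RP \vdash \forall y \leq \num n\, {\sf P}(y)$. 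Hence $\RP \vdash \certt\sigma$.

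For Clause~(3), suppose $\N \models \neg\,\sigma$ and let $\mc N$ be any model of $\certt\sigma$, witnessed by some $\virtob \in \mc N$ with $\mc N \models \virt(\virtob) \wedge \forall y \leq \virtob\, {\sf P}(y) \wedge \sigma_0(\virtob)$. The $\LA$-reduct of $\mc N$ satisfies the hypotheses of Lemma~\ref{keukensmurf}, so $\mc N \models \num m < \virtob$, and hence $\mc N \models \num m \leq \virtob$, for every $m \in \omega$. The middle conjunct then yields $\mc N \models {\sf P}(\num m)$ for every $m$, establishing all the $\sf P$-axioms of $\RP$. The arithmetical half $\RR_0$ holds in $\mc N$ by the argument from Theorem~\ref{QS}(3) applied to the $\LA$-reduct. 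Since $\mc N$ was arbitrary, the completeness theorem gives $\certt\sigma \vdash \RP$.

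The main subtlety is Clause~(2): because $\sf P$ is not in $\LA$, the sentence $\certt\sigma$ is no longer a $\Sigma_1$-$\LA$-sentence, and Theorem~\ref{SCompl} cannot be applied to it as a black box. One has to peel off the bounded universal quantifier over $\sf P$ explicitly, combining axiom $\mathsf{R4}$ with the infinitely many $\sf P$-axioms of $\RP$. All the heavy lifting about the arithmetical witness has already been done inside the $\LA$-machinery of Section~\ref{ce}, so in some sense Theorem~\ref{CSS0} is a nearly cosmetic lift of Theorem~\ref{CSS} to the signature $\LAP$.
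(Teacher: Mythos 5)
Your proof is correct and follows the paper's approach: add $\forall y \leq x\, {\sf P}(y)$ to the certificate, use $\mathsf{R4}$ together with the ${\sf P}(\num i)$-axioms to handle the new conjunct in Clause~(2), and apply Lemma~\ref{keukensmurf} to the $\LA$-reduct for Clause~(3); the paper simply says ``the proof of Theorem~\ref{QS} can be repeated'', and you have correctly spelled out where the repetition needs care. The only deviation is that the paper's $\certt\sigma$ also carries the conjunct $\bigwedge\idap$; the paper explicitly notes this is superfluous for Theorem~\ref{CSS0} itself (it is needed for Theorem~\ref{observation}), so your omission is harmless here.
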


\begin{proof}
For each pure 1-$\Sigma_1$-sentence $\sigma$, let $\sigma^{\sf cert_p}$ be the $\LA$-sentence:
\[
	\bigwedge \mathfrak {id}_{\sf ap} \wedge
 \exists v\, \bigl(\virt(v) \land 
  \sigma_0(v) \land \forall x \bleq v\, {\sf P}(x) \bigr). 
\]
The proof of Theorem \ref{QS} can be repeated for $\RP$ by using the sentence $\sigma^{\sf cert_p}$. 
Thus, for each $\Sigma_1$-sentence $\sigma$, it is shown that $\certt\sigma = (\sigma^\grullet)^{\sf cert_p}$ satisfies the required conditions. 
\end{proof}

The addition of $\bigwedge \mathfrak {id}_{\sf ap}$ in the definition of $\certt{\sigma}$ is superfluous in the context
of the proof of Theorem~\ref{CSS0}. It is added since it also delivers the following simple insight.

\begin{thm}\label{observation}
    Suppose $\tau$ is a translation from $\LAP$ to a signature of some theory. Let $\sigma$ be a $\Sigma_1$-sentence.  Then,
    ${\sf K}_\tau:\certt{\sigma}^\tau \rhd \certt{\sigma}$, where ${\sf K}_\tau$ is the interpretation based on $\tau$.
\end{thm}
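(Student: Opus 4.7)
The plan is to unfold the definition of interpretation and observe that, by the design of $\certt{\sigma}$, both conditions needed for ${\sf K}_\tau$ to be an interpretation are immediate. Recall that, for a translation $\tau \colon \LAP \to \Xi$, the triple $(\certt{\sigma}, \tau, \certt{\sigma}^\tau)$ is an interpretation precisely when $\certt{\sigma}^\tau \vdash \phi^\tau$ for every $\phi$ with $\certt{\sigma} \vdash \phi$. By the standard induction on derivations in classical first-order logic, this reduces to two things: (a) that $\certt{\sigma}^\tau$ prove $\phi^\tau$ for each non-logical axiom $\phi$ of $\certt{\sigma}$, and (b) that $\certt{\sigma}^\tau$ prove $(\idap)^\tau$, since $\tau$ translates the equality predicate to an arbitrary formula which must be verified to act as a congruence in the target signature.

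For (a), the theory $\certt{\sigma}$ is axiomatized by the single $\LAP$-sentence $\certt{\sigma}$, so the requirement reduces to $\certt{\sigma}^\tau \vdash \certt{\sigma}^\tau$, which is trivial. For (b), the conjunct $\bigwedge \idap$ in the definition of $\certt{\sigma}$ yields $\certt{\sigma} \vdash \bigwedge \idap$, and hence $\certt{\sigma}^\tau \vdash (\bigwedge \idap)^\tau$, which is exactly the $\tau$-translation of the identity axioms required.

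There is no real obstacle; the content of the theorem lies in the observation that the explicit inclusion of $\bigwedge \idap$ in $\certt{\sigma}$ — noted as superfluous for Theorem~\ref{CSS0} — ensures that an arbitrary translation of $\LAP$ into any target signature automatically lifts to an interpretation of $\certt{\sigma}$ in $\certt{\sigma}^\tau$, without needing to impose separate assumptions on the target to recover the identity axioms. This is the payoff that will matter in later applications, where one applies ${\sf K}_\tau$ without having to verify case-by-case that the destination theory treats $(=)_\tau$ as a congruence.
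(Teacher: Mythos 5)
Your proof is essentially correct and unfolds the same insight the paper relies on; note that the paper states Theorem~\ref{observation} without any explicit proof, treating it as the ``simple insight'' delivered by the design decision to include $\bigwedge\idap$ in $\certt{\sigma}$. Two small points deserve tightening. First, you give two obligations (a) and (b), but the standard relativization argument has a third: the target theory must prove $\exists v\,\delta_\tau(v)$, since translations do not in general preserve theorems that depend on nonemptiness of the domain (e.g.\ $\forall x\,\phi\to\exists x\,\phi$). This, too, is automatic here, because $\certt{\sigma}$ has the shape $\bigwedge\idap\wedge\exists v\,(\ldots)$, so $\certt{\sigma}^\tau$ entails $\exists v\,\delta_\tau(v)$; it is worth saying so explicitly, since it is the same design feature of $\certt{\sigma}$ doing the work. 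Second, the phrasing ``$\certt{\sigma}\vdash\bigwedge\idap$, and hence $\certt{\sigma}^\tau\vdash(\bigwedge\idap)^\tau$'' reads as if it invokes the very implication being established. The correct justification is purely syntactic: $(\cdot)^\tau$ commutes with the propositional connectives, so $(\bigwedge\idap)^\tau$ is literally a conjunct of $\certt{\sigma}^\tau$, and the needed provability is conjunction elimination on the target side, not an appeal to preservation of derivability. With those two clarifications, the argument is exactly what the paper intends.
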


\subsection{$\RP$-sourced Theories}

We define:
\begin{itemize}
    \item
    Let $T$ and $U$ be c.e.~theories and let $\Theta$ be the signature of $T$. 
    Let $\tau$ be a translation from $\Theta$ to the $U$-signature. 
    The theory $U$ is \emph{$\tau$-$T$-sourced} iff $U$ is deductively equivalent to $(T+ \idax{\Theta})^{\tau}$.
    The theory $U$ is $T$-sourced if it is $\tau$-$T$-sourced, for some $\tau$.
\end{itemize}

In this paper we just focus on $\RP$-sourced theories.
We have:

\begin{thm}[Generalized Certified Extension Theorem]\label{CSS1}
Suppose $W$ is $\tau$-\gener.
Then, for each $\Sigma_1$-sentence $\sigma$, we can effectively find a sentence $\certt{\sigma}$ satisfying the following conditions: 
\begin{enumerate}[1.]
    \item  $\certt\sigma^\tau \vdash  \sigma^\tau$. 
    \item If $\N \models \sigma$, then $W \vdash \certt\sigma^\tau$. 
    \item If $\N \models \neg \, \sigma$, then $\certt\sigma^\tau \vdash W$. 
\end{enumerate}
\end{thm}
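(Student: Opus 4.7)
The plan is to leverage Theorem~\ref{CSS0} almost directly, using the fact that $\RP$-sourcedness reduces everything to translating $\LAP$-sentences via $\tau$. The definitions have been set up so that the translated sentence $\certt{\sigma}$ (obtained from the $\RP$-version) will work for $W$ as well. The whole theorem is really a corollary of Theorem~\ref{CSS0} once we carefully track how identity axioms interact with translation.

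First, I would take $\certt{\sigma}$ to be exactly the sentence produced by Theorem~\ref{CSS0}, which gives us the three conditions with base theory $\RP$. Then I would verify the three clauses in order. For Clause~1, I start from $\certt{\sigma} \vdash \sigma$ and observe that translations preserve provability, provided the identity axioms of the source signature are available. Since by construction $\certt{\sigma}$ explicitly contains $\bigwedge \idap$, the translated sentence $\certt{\sigma}^\tau$ contains $(\bigwedge \idap)^\tau$, which allows the translated proof of $\sigma^\tau$ to go through. This is exactly the content of Theorem~\ref{observation}.

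For Clause~2, assume $\N \models \sigma$. By Theorem~\ref{CSS0}.2, $\RP \vdash \certt{\sigma}$, and since $\idap$ consists of logical identity axioms, we have trivially $\RP + \idap \vdash \certt{\sigma}$. Applying the translation $\tau$ gives $(\RP + \idap)^\tau \vdash \certt{\sigma}^\tau$. Since $W$ is $\tau$-$\RP$-sourced, $W$ is deductively equivalent to $(\RP + \idap)^\tau$, so $W \vdash \certt{\sigma}^\tau$. For Clause~3, assume $\N \models \neg\,\sigma$. By Theorem~\ref{CSS0}.3, $\certt{\sigma} \vdash \RP$, and since $\certt{\sigma}$ contains $\bigwedge\idap$ as a conjunct, also $\certt{\sigma} \vdash \idap$. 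Hence $\certt{\sigma} \vdash \RP + \idap$. Translating yields $\certt{\sigma}^\tau \vdash (\RP + \idap)^\tau$, and by $\RP$-sourcedness this is deductively equivalent to $W$, giving $\certt{\sigma}^\tau \vdash W$.

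The only real subtlety, and what I expect to require the most care in the writeup, is making sure that translation commutes with provability correctly, i.e.\ that $\Gamma \vdash \phi$ implies $\Gamma^\tau \vdash \phi^\tau$ in the presence of identity. This is precisely the reason the authors bundled $\bigwedge \idap$ into the definition of $\certt{\sigma}$ in Section~5.1, and the reason Theorem~\ref{observation} was stated: the $\idap$-conjunct ensures that any use of the identity axioms inside the original derivation survives the translation without having to be axiomatized separately in $W$. Beyond this bookkeeping, the proof is essentially a one-line reduction to Theorem~\ref{CSS0}.
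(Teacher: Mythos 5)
Your proposal is correct and takes essentially the same route as the paper: the paper's proof is exactly the one-line reduction to Theorem~\ref{CSS0}, invoking Theorem~\ref{observation} for clause~1, and your writeup fills in the bookkeeping (the role of $\bigwedge \idap$ in making the translated derivations valid, and the use of deductive equivalence with $(\RP + \idap)^\tau$) that the authors leave implicit in the word ``immediate.''
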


\begin{proof}
This is immediate from Theorem~\ref{CSS0}. Note that, for (1), we use Theorem~\ref{observation}.
\end{proof}

Here is a first simple example of a \gener\ theory.
\begin{ex}
    Any finitely axiomatized theory $A$ that interprets $\RR_0$ is \gener.
    We note that this example is not very useful, since we already know that the applications we
    want from \gener\ theories hold for finitely axiomatized theories.
\end{ex}

Here is a second example. 
We remind the reader of Vaught's set theory {\sf VS}.
It is a theory in the signature with the single binary relation symbol $\in$, axiomatized by the following axioms.
\begin{description}
    \item[${\sf VS}n$.]
   $ \forall x_0 \dots \forall x_{n-1}\, \exists z\, \forall y \, (y \in z \iff \bigvee_{i<n}y =x_i)$
\end{description}

\noindent
We note that in the case that $n=0$, we have an axiom that guarantees the existence of some empty sets.
We have:

\begin{thm}
    {\sf VS} interprets $\RR_0$.
\end{thm}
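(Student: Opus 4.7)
The plan is to exhibit a translation $\tau\colon \LA \to \{\in\}$ making every axiom of $\RR_0$ provable in ${\sf VS}$, using von Neumann naturals as the interpretation of numerals. Concretely, I would set $\underline 0^\tau$ to be any empty set (existing by ${\sf VS}0$), take $\mathsf{s}^\tau(x,y) := \forall z(z\in y \iff z\in x \vee z=x)$ and $x\leq^\tau y := x\in y \vee x =^\tau y$, and interpret equality of $\LA$ extensionally, $x =^\tau y := \forall z(z\in x \iff z\in y)$. Each $\underline{n+1}^\tau$ is then a set $y$ with the defining property of $\mathsf{s}^\tau$ applied to $x = \underline n^\tau$; in extension this is $\{\underline 0^\tau, \ldots, \underline n^\tau\}$, which exists by ${\sf VS}(n+1)$. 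The operations $+^\tau$ and $\times^\tau$ are given by the graphs of the standard recursion clauses.

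The verification proceeds one axiom at a time, each using finitely many of the ${\sf VS}n$. Axiom ${\sf R}5'$ is immediate: for $m\leq n$ we have either $\underline m^\tau \in \underline n^\tau$ or $\underline m^\tau =^\tau \underline n^\tau$ by the defining property. Axiom ${\sf R}4$ follows from the extensional description of $\underline n^\tau$ and the definition of $\leq^\tau$. Axiom ${\sf R}3$ is proved by meta-induction on $\max(m,n)$: assuming $m<n$ and $\underline m^\tau =^\tau \underline n^\tau$, the membership $\underline m^\tau \in \underline n^\tau$ would force $\underline m^\tau \in \underline m^\tau$, hence $\underline m^\tau$ would be primitively equal to some $\underline i^\tau$ with $i<m$, contradicting the inductive hypothesis. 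Axioms ${\sf R}1$ and ${\sf R}2$ are then verified by unfolding the graphs of $+^\tau$ and $\times^\tau$ a finite number of times for each specific $(m,n)$, each step justified by an appropriate ${\sf VS}k$.

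The main obstacle is that the function symbols of $\LA$ demand interpretations that are provably functional and total on the chosen domain. Extensional interpretation of equality secures functionality up to $=^\tau$ trivially, but totality is delicate: ${\sf VS}$ does not prove, for arbitrary $x$, that the adjunction $x \cup \{x\}$ exists. The domain predicate $\delta_\tau$ must therefore carve out a class of sets on which successor, $+^\tau$, and $\times^\tau$ are provably total; a natural choice is the class of sets that come equipped with an explicit ``numeral history'' recording their construction from $\emptyset$ via iterated successor. A cleaner alternative I would consider is to factor the result through an intermediate theory: interpret adjunctive set theory inside ${\sf VS}$ via a coding built from the ${\sf VS}n$, and then invoke the known interpretability of $\RR_0$ in adjunctive set theory. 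Either route reduces the theorem to a finite set-theoretic check per axiom, which is available within the ${\sf VS}$-schema.
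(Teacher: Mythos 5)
The paper's proof is a one-line citation: Visser and Enayat \cite{viss:enay21} (Appendix A) establish that $\mathsf{VS}$ interprets $\RR$, and since $\RR_0$ is a subtheory of $\RR$, the result follows \emph{a fortiori}. You instead attempt a direct construction, and you correctly identify the central obstacle — $\mathsf{VS}$ does not prove totality of adjunction, so the function symbols of $\LA$ cannot be interpreted on an unrestricted domain. But neither of your two proposed resolutions actually closes this gap.

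Your ``cleaner alternative'' — interpreting adjunctive set theory $\mathsf{AS}$ in $\mathsf{VS}$ and then composing with a known interpretation of $\RR_0$ in $\mathsf{AS}$ — is definitively wrong. The present paper itself proves that $\mathsf{VS} \nrhd \mathsf{AS}$ (in the example following Theorem~\ref{loeblike}): every finite subtheory of $\mathsf{VS}$ is interpretable in the theory of non-surjective pairing $\mathsf{VS}0 + \mathsf{VS}2$, which has a decidable consistent extension, while $\mathsf{AS}$ is essentially undecidable. So that route is closed off.

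Your primary route, via a domain of sets ``equipped with a numeral history,'' is also unresolved as stated. For the relational translation of $\s$, $+$, $\times$ to yield an interpretation, $\mathsf{VS}$ must prove that the domain $\delta_\tau$ is closed under successor: from $\delta_\tau(x)$ and the relevant witnessing data, one must be able to produce a $y$ with $\s^\tau(x,y)$ and $\delta_\tau(y)$, and exhibit an extended history for $y$. But this step is precisely an adjunction — you need $x \cup \{x\}$ to exist — and the $\mathsf{VS}n$ axioms only produce the sets $\{x_0,\ldots,x_{n-1}\}$ for \emph{each fixed $n$}; they give no uniform closure statement. Your own observation that $\mathsf{VS}$ does not prove adjunction for arbitrary $x$ applies equally to elements of any naively-defined ``history'' domain; wrapping $x$ in a certificate does not make $x\cup\{x\}$ exist. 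The actual interpretation in \cite{viss:enay21} circumvents this by a more delicate construction (and the paper's own proof that $\mathsf{VS}$ is $\RR_{0\mathsf{p}}$-sourced then layers a local adjunction bound $\mathsf{P}_{\nu^\star}$ on top of it), but those details are not recoverable from the sketch you give. As written, your proposal reduces the theorem to exactly the open totality problem rather than solving it.
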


\begin{proof}
    In \cite[Appendix A]{viss:enay21}, it was proven that {\sf VS} interprets $\RR$.
    So, \emph{a fortiori}, {\sf VS} interprets $\RR_0$.
\end{proof}

\begin{thm}\label{VS}
    The theory {\sf VS} is $\RR_{0{\sf p}}$-sourced.
\end{thm}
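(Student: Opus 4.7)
The plan is to construct an explicit translation $\tau:\LAP \to \{\in\}$ that witnesses the deductive equivalence of ${\sf VS}$ and $(\RP + \idap)^\tau$. I would start from the interpretation $\tau_0$ of $\RR_0$ in ${\sf VS}$ that is available through the interpretation of $\RR$ in ${\sf VS}$ from \cite[Appendix A]{viss:enay21}; there the numerals $\num n$ are realised as von Neumann ordinals, so that $\num n^{\tau_0}$ behaves as an $n$-element set whose elements are precisely $\num 0^{\tau_0},\dots,\num{n-1}^{\tau_0}$. I then extend $\tau_0$ to a translation $\tau$ of all of $\LAP$ by defining ${\sf P}^\tau(v_0)$ to be a formula $\Pi(v_0)$ in the pure $\in$-language expressing ``for every set $f$ that is coded by Kuratowski pairs as a function with domain $v_0$, there is a set whose elements are exactly the values of $f$.''

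For the direction ${\sf VS} \vdash \phi^\tau$ with $\phi \in \RP + \idap$, the translated $\RR_0$ axioms hold through $\tau_0$, the axioms of $\idap$ reduce to standard congruence facts (the ${\sf P}$-congruence axiom being handled by the extensional shape of $\Pi$), and each ${\sf P}(\num n)^\tau = \Pi(\num n^{\tau})$ unfolds in ${\sf VS}$ into the sentence ${\sf VS}n$, which is an axiom. For the converse direction $(\RP + \idap)^\tau \vdash {\sf VS}n$, the translated axiom $\Pi(\num n^{\tau})$, together with the translated ${\sf R}3$ and ${\sf R}4'$ pinning down $\num n^{\tau}$ as a set with exactly the $n$ pairwise distinct elements $\num 0^\tau,\dots,\num{n-1}^\tau$, unfolds into ${\sf VS}n$—provided we can turn an arbitrary tuple $(x_0,\dots,x_{n-1})$ into a coded function on $\num n^{\tau}$. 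The Kuratowski pairs and the ambient function required for this bootstrap are themselves furnished by the lower instances $\Pi(\num k^{\tau})$ for small $k$, which are already among the axioms of $(\RP)^\tau$.

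The main obstacle I expect is pinning down $\Pi(v_0)$ exactly: it must (i) be equivalent in ${\sf VS}$ to ${\sf VS}n$ when instantiated at $\num n^{\tau}$, and (ii) already provably imply ${\sf VS}n$ from $(\RR_0)^\tau$ together with the lower instances of $\Pi$, without assuming ${\sf VS}n$ itself. Selecting a Kuratowski coding of functions that behaves uniformly under $\tau$, and verifying that the translated ${\sf R}3$ and ${\sf R}4'$ really do constrain the elements of $\num n^{\tau}$ tightly enough to identify functions on it with $n$-tuples, is where the real bookkeeping will go.
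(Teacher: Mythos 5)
Your overall strategy---extending the interpretation $\tau_0$ of $\RR_0$ in ${\sf VS}$ from \cite{viss:enay21} by supplying a translation for ${\sf P}$---is the same as the paper's. However, your choice of ${\sf P}_\tau(v_0) = \Pi(v_0)$ as ``for every function $f$ coded as a set of Kuratowski pairs with domain $v_0$, the range of $f$ exists'' creates a genuine circularity that the paper's formulation is specifically designed to avoid. To derive ${\sf VS}n$ from $(\RP + \idap)^\tau$ you must apply $\Pi(\num n^{\tau})$ to a function $f$ with $f(i)=x_i$ for $i<n$; but $f$ is itself an $n$-element set of pairs, and producing such a set is exactly what ${\sf VS}n$ asserts. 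You claim the lower instances $\Pi(\num k^{\tau})$ for $k<n$ furnish the needed bootstrap, but they only deliver \emph{ranges} of functions you already possess; they give you no way to \emph{adjoin} a new pair to a shorter function, so the inductive build-up never starts. Already at $n=2$ this collapses: the witnessing function for $\Pi(\num 2^{\tau})$ is a two-element set of pairs, and forming a two-element set is precisely ${\sf VS}2$. Worse, $\Pi(\num 0^{\tau})$ is vacuously true in a model without an empty set, so your axioms do not even secure ${\sf VS}0$.

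The paper resolves this by letting ${\sf P}_{\nu^\star}(x)$ say that ${\sf VS}0$ and ${\sf VS}2$ hold \emph{and} that every set $y$ whose cardinality is $\leq x$ (witnessed by an injection of $y$ into the numbers $<_\nu x$) admits adjunction of any element. Bundling ${\sf VS}0$ and ${\sf VS}2$ explicitly into ${\sf P}$ breaks the circularity---they hand you the empty set and pairing directly---and phrasing the main clause as bounded \emph{adjunction} rather than as range-existence provides exactly the incremental mechanism needed: one builds $\{x_0,\dots,x_{n-1}\}$ together with the injection witnessing its cardinality side by side, one element at a time, invoking the ${\sf P}(\num k^\tau)$ at the appropriate levels. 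Your ``range of functions'' predicate would need both of those ingredients added before the bookkeeping you flag as the main obstacle could even begin.
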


The idea of the proof is to represent {\sf VS} as Adjunctive Set Theory {\sf AS} with local size restrictions on the sets
to which one can apply adjunction.

\begin{proof}
Let $\nu$ be the translation on which an interpretation of $\RR_0$ in {\sf VS} is based.
We assume that $\nu$ is one-dimensional---as the translation provided by \cite{viss:enay21} is. 
The many-dimensional case only requires minor adaptations.

We extend $\nu$ to $\nu^\star$ by providing a translation of {\sf P}. 
The statement ${\sf P}_{\nu^\star}( x)$ will roughly say that ${\sf VS}0$ and ${\sf VS}2$ and that, whenever
we have a set $y$ of cardinality $\leq x$, we may adjoin any $z$ to $y$. Here ${\sf VS}0$ and ${\sf VS}2$ are needed to provide the
necessary coding machinery. Here are the ingredients for formulation of our statement.
\begin{itemize}
    \item 
    $f$ is an injection from objects to numbers or
    ${\sf in}(f)$ iff $f$ is a set of pairs, $f$ is functional w.r.t.~$=$, the objects in the
    range of $f$ are in $\delta_\nu$, and
    $f$ is injective in the sense that, if $f(u) =_\nu f(v)$, then $u=v$.
    \item 
    ${\sf dom}(f,y)$ iff $\forall w\, (w\in y \iff \exists v\, f(w)=v)$.
    \item 
    ${\sf card}_{\leq} (y,x)$ iff there is an $f$ with ${\sf in}(f)$ and ${\sf dom}(f,y)$ and 
    $\forall v\in y\, f(v) <_\nu x $.
    \item 
    ${\sf P}_{\nu^\star}(x)$ iff ${\sf VS}0$ and ${\sf VS}2$ and 
    \[\forall y\, \forall z\, ({\sf card}_{\leq}(y,x) \to \exists w \, \forall u\, (u \in w \iff (u\in x \vee u =z))).\] 
\end{itemize}
The deductive equivalence between $\RR^{\nu^\star}_{0{\sf p}}$ and {\sf VS} is now easy to verify.
\end{proof}

We provide two larger classes of
examples of \gener\ theories. Consider any signature $\mc L$. Let $\tau_0$ be a translation of $\mc L$ in $\LA$.

Let $\chi_0(x_0, \ldots, x_{k_0-1})$, \ldots, $\chi_m(x_0, \ldots, x_{k_m-1})$ be 
any $\mc L$-formulas and let
$X_0, \ldots, X_m$ be any computable relations on $\omega$ such that $X_i \subseteq \omega^{k_i}$ for each $i \leq m$.
We write $\num{n}$ for the $\tau_0$-numerals in the context of $\mc L$.
We define the theory $\RR_0[\tau_0;\chi_0, \ldots, \chi_m; X_0, \ldots, X_m]$ as follows:
\[
	\RR_0^{\tau_0} + \bigcup_{i \bleq m} \{\chi_i(\num{n_0}, \ldots, \num{n_{k_i-1}}) \mid (n_0, \ldots, n_{k_i-1}) \in X_i\}. 
\]
We use $\RR_0[\tau_0;\vv{\chi}; \vv{X}]$ as an abbreviation of
$\RR_0[\tau_0;\chi_0, \ldots, \chi_m; X_0, \ldots, X_m]$. If $\mc L$ is $\LA$ and if $\tau_0$ is the identity translation on
$\LA$, we simply omit $\tau_0$.

\begin{thm}
Let $\mc L$ be a signature, let $\tau_0$ be a translation of $\LA$ into $\mc L$.  Let $\vv \chi$ be 
any $\mc L$-formulas and let
$\vv X$ be computable relations on $\omega$ matching $\vv \chi$.
Then, $\RR_0[\tau_0;\vv{\chi}; \vv{X}]$ is \gener.
\end{thm}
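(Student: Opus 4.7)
The plan is to define a translation $\tau:\LAP\to\mc L$ that extends $\tau_0$ and interprets the fresh predicate ${\sf P}$ as a uniform bound on the extra axioms of $W:=\RR_0[\tau_0;\vv\chi;\vv X]$: informally, ${\sf P}_\tau(v)$ will say ``all the $\chi_i$-axioms of $W$ whose witnesses are $\leq v$ hold.'' The deductive equivalence between $W$ and $(\RP+\idap)^\tau$ is then a matter of moving back and forth between these local and global forms.

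The first ingredient is a \emph{strongly representing} formula for each $X_i$. Since $X_i$ is computable, both $X_i$ and its complement are $\Sigma_1$-definable; feeding $\Sigma_1$-definitions of each into Theorem~\ref{SCompl} produces a formula $\xi_i(\vv x)$ in $\LA$ with $\RR_0\vdash\xi_i(\num{\vv n})$ for $\vv n\in X_i$ and $\RR_0\vdash\neg\,\xi_i(\num{\vv n})$ for $\vv n\notin X_i$. Extend $\tau_0$ to $\tau$ by setting
\[
{\sf P}_\tau(v)\;:=\;\bigwedge_{i\leq m}\forall\vv x\leq v\,\bigl(\xi_i^{\tau_0}(\vv x)\to\chi_i(\vv x)\bigr),
\]
where the bounded quantifier is read relative to $\delta_{\tau_0}$ and $\leq_{\tau_0}$. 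The translated identity axiom for ${\sf P}$ is automatic since $\xi_i^{\tau_0}$ and the $\tau_0$-bounded quantifier are both congruent for $=_{\tau_0}$.

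For $(\RP+\idap)^\tau\vdash W$: the arithmetic part is $\RR_0^{\tau_0}$, already in $W$; for each additional axiom $\chi_i(\num{\vv n})$ with $\vv n\in X_i$, pick $n^\ast\geq\max(\vv n)$, specialize the translated axiom ${\sf P}_\tau(\num{n^\ast})$ at $\vv x:=\num{\vv n}$, and read off $\chi_i(\num{\vv n})$ using ${\sf R5'}$ (for the bound $\num{\vv n}\leq\num{n^\ast}$) and the positive side of strong representability (for $\xi_i^{\tau_0}(\num{\vv n})$). Conversely, for $W\vdash(\RP+\idap)^\tau$, the only nontrivial axioms are ${\sf P}_\tau(\num n)$; reasoning in $W$, axiom ${\sf R4'}$ translated via $\tau_0$ forces any $\vv x\leq\num n$ to be $=_{\tau_0}$-equal to some numeral tuple $\num{\vv m}$ with $m_j\leq n$, and a finite case split on the finitely many such $\vv m$ either discharges the conclusion (when $\vv m\in X_i$, via the $W$-axiom $\chi_i(\num{\vv m})$) or refutes the hypothesis $\xi_i^{\tau_0}(\vv x)$ (when $\vv m\notin X_i$, via $\RR_0\vdash\neg\,\xi_i(\num{\vv m})$ and $=_{\tau_0}$-congruence of $\xi_i^{\tau_0}$).

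The main obstacle is the negative side of strong representability, i.e.\ the clause $\RR_0\vdash\neg\,\xi_i(\num{\vv n})$ for $\vv n\notin X_i$: $\Sigma_1$-completeness alone supplies the positive half, whereas the negative half is precisely where the full computability of $X_i$ (equivalently, the $\Sigma_1$-definability of its complement) is exploited. A second, lesser subtlety is that the case $\vv m\in X_i$ above tacitly uses congruence of $\chi_i$ under $=_{\tau_0}$ to transport $\chi_i(\num{\vv m})$ to $\chi_i(\vv x)$; this is automatic when $\tau_0$ sends $=$ to genuine $\mc L$-equality, as happens in all the applications of the present paper, and can in general be arranged by first replacing each $\chi_i$ with its $=_{\tau_0}$-closure.
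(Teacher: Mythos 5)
Your construction of ${\sf P}_\tau$ and $\tau$ is exactly the paper's (the paper calls the same formula $\chi^*$), and your two-directional verification of the deductive equivalence between $W$ and $\RP^\tau$ spells out what the paper dismisses with a ``clearly.'' One small imprecision: $\Sigma_1$-completeness alone does not hand you a single $\xi_i$ satisfying both halves of strong representability --- from the $\Sigma_1$-definitions of $X_i$ and its complement one must still form a witness-comparison formula and use the well-behavedness machinery of Lemma~\ref{heelkleinesmurf}, as in the paper's Lemma~\ref{smurferella2}; you gesture at this but your phrase ``feeding $\Sigma_1$-definitions into Theorem~\ref{SCompl}'' skips that combining step. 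The $=_{\tau_0}$-congruence caveat you raise in the direction $W\vdash\RP^\tau$ is a genuine subtlety the paper leaves implicit; it is harmless because, as you observe, $\tau_0$ sends $=$ to identity in every application the paper actually makes.
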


\begin{proof}
To simplify inessentially we assume that $\tau_0$ is 1-dimensional.  
For each $i \leq m$, let $\xi_i(x_0, \ldots, x_{k_i-1})$ be an $\LA$-formula representing $X_i$ in $\RR_0$. 
%For each $k \geq 1$, let $\langle x_0, \ldots, x_k \rangle : \N^{k+1} \to \N$ be a usual computable bijection. 
That is, for any $n_0, \ldots, n_{k_i-1} \in \omega$, if $(n_0, \ldots, n_{k_i-1}) \in X_i$, then 
$\RR_0 \vdash \xi_i(\num{n_0}, \ldots, \num{n_{k_i-1}})$; and if $(n_0, \ldots, n_{k_i-1}) \notin X_i$, then 
$\RR_0 \vdash \neg\, \xi_i(\num{n_0}, \ldots, \num{n_{k_i-1}})$. 
Let $\chi^*(x)$ be the $\mc L$-formula
\begin{align*}
	\bigwedge_{i \bleq m} & 
 \forall x_0 \bleq_{\tau_0} x\, \cdots \forall x_{k_i-1} \leq_{\tau_0} x\, 
 (\xi^{\tau_0}_i(x_0, \ldots, x_{k_i-1}) \to \chi_i(x_0, \ldots, x_{k_i-1})).
 %\\
 %   & \bigl(x = \langle x_0, \ldots, x_{k_i-1} \rangle \land \xi_i(x_0, \ldots, x_{k_i-1}) \to \chi_i(x_0, \ldots, x_{k_i-1}) \bigr). 
\end{align*}
We extend $\tau_0$ to $\tau$ by setting ${\sf P}_\tau(x) := \chi^\ast(x)$.
Then, clearly, $\RR_0[\tau_0; \vv{\chi}; \vv{X}]$ is deductively equivalent to $\RP^\tau$. 
\end{proof}

\begin{ex}
The theory $\RR_0$ is \gener\ since it is deductively equivalent to
$\RR_0[x=x; \emptyset]$ and the theory
$\RR$ is \gener\ since it is deductively equivalent to 
$\RR_0[\forall y\, (y \leq x \lor x \leq y); \omega]$. 
\end{ex}

\begin{ex}\label{leergierigesmurf}
    Suppose $\forall \vv x\, P_0(\vv x)$ is a true pure $\Pi_1$-sentence. Consider $W :=\RR_0[P_0;\omega^n]$, where $n$ is the length of $\vv x$.
    Then, by $\Sigma_1$-completeness, $W$ is deductively equivalent to $\RR_0$. However, modulo deductive equivalence,
    the corresponding $(\sigma^{\sf cert_p})^\tau$ has the form 
    $\exists z\,  (\virt(z) \wedge \forall \vv x \bleq z\, P_0(\vv x) \wedge \exists z\, \sigma_0(z))$.
    So, we may think of $\virt(z) \wedge \forall \vv x \bleq z\, P_0(\vv x)$ as replacing $\virt(z)$. Thus, we may add all kind of
    desirable properties to $\virt$ like the linearity of $\leq$ below the certified element. This may be useful if we want to use
    \vness\ as the basis of an interpretation of a stronger theory in $\RR_0$.
\end{ex}

In the development below, we use the notion of \emph{depth-of-quantifier-alternations complexity}, henceforth, simply \emph{complexity}.
For a careful exposition of this notion, see \cite{viss:smal19}.

\begin{thm}\label{restrictionthm}
Consider a finitely axiomatized theory $A$ and a number $n$ and a computable set $X$ of $A$-sentences of
complexity $\leq n$. Suppose that:
\begin{itemize}
    \item 
    $A$ interprets $\RR_0$ via an interpretation based on translation $\tau_0$.
    \item
    There is an $A$ formula $\varPhi$ such that $A \vdash  \Phi(\gn{\phi}) \iff \phi$, for all
    $A$-sentences $\phi$ of complexity $\leq n$. Here the numerals are the $\tau_0$-numerals. \textup(Note that these
    could be sequences modulo a definable equivalence relation.\textup)
\end{itemize}
Then $A +  X$ is an \gener\ c.e.~theory.
\end{thm}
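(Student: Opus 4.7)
The plan is to extend the given translation $\tau_0$ to a translation $\tau$ of $\LAP$ by an appropriate choice of ${\sf P}_\tau$ that packs together the (finite) axiomatization of $A$ and a local test for membership in $X$. Let $\alpha$ be the conjunction of the axioms of $A$. Since $X$ is computable, so is the set $\{\gn{\phi} \mid \phi \in X\} \subseteq \omega$, and one can pick an $\LA$-formula $\xi(x)$ representing this set in $\RR_0$ in the strong sense that $\RR_0 \vdash \xi(\num n)$ when $n = \gn{\phi}$ for some $\phi \in X$ and $\RR_0 \vdash \neg\,\xi(\num n)$ otherwise. I then set
\[
{\sf P}_\tau(x) \;:=\; \alpha \,\wedge\, \forall y \leq_{\tau_0} x\,\bigl(\xi^{\tau_0}(y) \to \Phi(y)\bigr),
\]
and let $\tau$ agree with $\tau_0$ on the $\LA$-symbols.

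I would then verify the deductive equivalence of $A + X$ with $(\RP + \idap)^\tau$ in both directions. For $(\RP + \idap)^\tau \vdash A + X$: the axiom ${\sf P}_\tau(\num 0)$ directly yields $\alpha$, hence $A$; and for each $\phi \in X$, setting $n := \gn{\phi}$, the axiom ${\sf P}_\tau(\num n)$ together with the $\RR_0^{\tau_0}$-provable facts $\num n \leq_{\tau_0} \num n$ and $\xi^{\tau_0}(\num n)$ delivers $\Phi(\num n)$; combining with the already-derived $A$ and with $A \vdash \Phi(\gn{\phi}) \iff \phi$ (applicable since $\phi$ has complexity $\leq n$) gives $\phi$. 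For the converse $A + X \vdash (\RP + \idap)^\tau$: the translated identity axioms $\idap^\tau$ and the axioms $\RR_0^{\tau_0}$ are immediate from $A \rhd_{\tau_0} \RR_0$, and deriving ${\sf P}_\tau(\num n)$ reduces, via the $\RR_0^{\tau_0}$-equivalence of $y \leq_{\tau_0} \num n$ with $\bigvee_{i \leq n} y = \num i$, to checking the implication $\xi^{\tau_0}(\num i) \to \Phi(\num i)$ for each $i \leq n$; if $i = \gn{\phi}$ for some $\phi \in X$, then $\phi$ has complexity $\leq n$, so $\Phi(\num i) \iff \phi$ is $A$-provable and $\phi \in X$ gives $\Phi(\num i)$; otherwise $\RR_0 \vdash \neg\,\xi(\num i)$ and the implication is vacuous.

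The main obstacle is essentially bookkeeping, not conceptual: one must keep the Gödel numbering used by $\Phi$ in sync with the one used for $\xi$, and, if $\tau_0$ is many-dimensional, treat the $\tau_0$-numerals $\num n$ as tuples modulo a $\tau_0$-definable equivalence (as explicitly allowed by the statement) so that the bounded quantifier $\forall y \leq_{\tau_0} \num n$ is interpreted correctly. One must also double-check that the ${\sf P}$-congruence component of $\idap^\tau$ is respected, which is automatic because ${\sf P}_\tau$ is defined by a single formula once the $\tau_0$-equivalence has been quotiented out. Finally, $A + X$ is c.e.~since $A$ is finite and $X$ is computable, so once the equivalence above is verified, $A + X$ qualifies as a \gener\ c.e.~theory, as required.
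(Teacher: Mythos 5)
Your proposal is correct and is essentially the paper's argument, only inlined: the paper observes in one line that $A+X$ is deductively equivalent to $\RR_0[\tau_0;\bigwedge A,\Phi;\top,X]$ and cites the earlier theorem that any such theory is $\RP$-sourced, whereas you directly unfold that reduction, constructing ${\sf P}_\tau(x) := \alpha \wedge \forall y\leq_{\tau_0}x\,(\xi^{\tau_0}(y)\to\Phi(y))$, which is exactly the formula $\chi^\ast$ from that earlier proof specialized to the data $\bigwedge A,\Phi,\top,X$. The bookkeeping you flag (Gödel-numbering consistency, many-dimensional numerals, ${\sf P}$-congruence in $\idap$) is indeed where the care goes, and your handling of it is fine.
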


\begin{proof}
We write $\top$ for the non-empty zero-ary relation\footnote{If we represent relations as sets of tuples, then
$\top = \verz{\varepsilon}$, where $\varepsilon$ is the empty sequence.} and $\bigwedge A$ for
the sentence that is the conjunction of the axioms of $A$.
    We note that $A+X$ is deductively equivalent to $\RR_0[\tau_0;\bigwedge A,\varPhi; \top,X]$.
\end{proof}

A theory is \emph{restricted}, iff it can be axiomatized by axioms of complexity $\leq n$, for some fixed $n$. 
In \cite{viss:smal19} it has been verified that sequential restricted c.e.~theories can be written
as $A+X$, where $A$ and $X$ satisfy the conditions of Theorem~\ref{restrictionthm}. So, we have:

\begin{cor}\label{smartsmurf}
    Any restricted sequential c.e.~theory is \gener.
\end{cor}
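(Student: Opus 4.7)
The plan is to reduce the result directly to Theorem~\ref{restrictionthm}, so that essentially no new work beyond a citation is required. Let $U$ be a restricted sequential c.e.~theory, axiomatized by axioms of complexity at most $n$. The first step is to invoke the analysis carried out in \cite{viss:smal19} to decompose $U$ in the form $A+X$, where $A$ is a finitely axiomatized subtheory of $U$ carrying the sequential structure and $X$ is a computable set of $A$-sentences of complexity $\leq n$ whose addition to $A$ recovers the remaining axioms of $U$.

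Next, I would check that the pair $(A, X)$ satisfies the two hypotheses of Theorem~\ref{restrictionthm}. The first, that $A$ interprets $\RR_0$ via some translation $\tau_0$, is a standard consequence of sequentiality: every sequential theory carries an interpretation of $\RR_0$ built on its sequence coding, and for a finitely axiomatized sequential $A$ this yields a concrete translation $\tau_0$. The second hypothesis, the existence of an $A$-formula $\Phi$ such that $A \vdash \Phi(\gn{\phi}) \iff \phi$ for all $A$-sentences $\phi$ of complexity $\leq n$ (with numerals interpreted in the $\tau_0$-sense, possibly as equivalence classes of sequences), is the familiar fact that sequential theories admit partial truth predicates of every fixed complexity; the matching of the numeral convention with $\tau_0$ is part of the construction in \cite{viss:smal19}.

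With both hypotheses in place, Theorem~\ref{restrictionthm} yields that $A + X$ is \gener, and since $A + X$ is deductively equivalent to $U$, so is $U$.

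The only potential obstacle is ensuring that the decomposition $U = A+X$ supplied by \cite{viss:smal19} really does use the \emph{same} translation $\tau_0$ both for interpreting $\RR_0$ and for reading the numerals that appear inside $\Phi$; this coherence is what makes Theorem~\ref{restrictionthm} apply cleanly. Since this alignment is precisely what is established in \cite{viss:smal19}, no further delicate work is needed here, and the corollary follows by invoking the cited decomposition and Theorem~\ref{restrictionthm} in succession.
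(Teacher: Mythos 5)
Your proposal is correct and follows exactly the route the paper takes: cite \cite{viss:smal19} for the decomposition $U = A + X$ satisfying the hypotheses of Theorem~\ref{restrictionthm}, and then apply that theorem. The paper's proof is essentially just that one-line citation; your elaboration of why the two hypotheses (interpretation of $\RR_0$ via a sequential $\tau_0$, and the partial truth predicate with matching numeral convention) are met is a faithful unpacking of the same argument.
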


Examples of restricted sequential c.e.~theories are:
\begin{itemize}
\item
$\mho_{\sf PA} :={\sf S}^1_2+ {\sf Con}_1({\sf PA}) + {\sf Con}_2({\sf PA}) + \dots$, where the ${\sf Con}_i(\PA)$ are
consistency statements where we restrict the {\sf PA}-axioms to those with G\"odel number $\leq i$ and where we restrict
the proofs to those in which only formulas of depth of quantifier alternations $\leq i$ occur.
    \item 
$\mathrm{I}\Delta_0+\Omega_1+ \Omega_2+\dots$,
\item
${\sf EA}+{\sf Con}({\sf EA})+
{\sf Con}({\sf EA}+{\sf Con}({\sf EA}))+ \dots$,
\item
{\sf PRA} (in a suitable version in finite
signature).
\end{itemize}

We end with a closure condition.

\begin{thm}
    Suppose $U$ and $V$ are \gener\ theories in the same signature $\mc L$, as witnessed by $\tau$ and $\tau'$.
    Suppose the restriction of $\tau$ and $\tau'$ to the arithmetical signature is a shared part $\tau_0$.
    Then $U \cup V$ is \gener.
\end{thm}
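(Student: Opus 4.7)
The plan is to construct a single translation $\tau''$ from $\LAP$ to $\mc L$ such that $(\RP + \idap)^{\tau''}$ is deductively equivalent to $U \cup V$. Let $\tau''$ inherit the shared arithmetical part $\tau_0$ on $\LA$, so only ${\sf P}_{\tau''}$ needs to be specified. Write $\phi(\vv v) := {\sf P}_\tau(\vv v)$ and $\phi'(\vv v) := {\sf P}_{\tau'}(\vv v)$, and let $C$ be the closed $\mc L$-sentence $\bigwedge \idap^\tau \wedge \bigwedge \idap^{\tau'}$. I propose
\[ {\sf P}_{\tau''}(\vv v) \;:=\; \phi(\vv v) \wedge \phi'(\vv v) \wedge C. \]

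For the direction $U \cup V \vdash (\RP + \idap)^{\tau''}$, the axioms $\RR_0^{\tau_0}$ and $\idax{\LA}^{\tau_0}$ are shared and sit in both $U$ and $V$. Each axiom ${\sf P}_{\tau''}(\num n) = \phi(\num n) \wedge \phi'(\num n) \wedge C$ splits into conjuncts coming from $U$, from $V$, and from $U \cup V$ respectively. The ${\sf P}_{\tau''}$-congruence axiom in $\idap^{\tau''}$ reduces, since only $\phi$ and $\phi'$ depend on the free variables, to congruence of $\phi$ and of $\phi'$ with respect to $=_{\tau_0}$, both of which $C$ supplies. Conversely, for $(\RP + \idap)^{\tau''} \vdash U \cup V$, the arithmetical axioms again match, and ${\sf P}_\tau(\num n)$ and ${\sf P}_{\tau'}(\num n)$ drop out as conjuncts of ${\sf P}_{\tau''}(\num n)$. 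The critical identity axioms $\idap^\tau$ and $\idap^{\tau'}$ are recovered verbatim from $C$, extracted from the single axiom ${\sf P}_{\tau''}(\num 0)$.

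The main obstacle --- and the reason the naive choice ${\sf P}_{\tau''} := \phi \wedge \phi'$ fails --- is that the translated $\sf P$-congruence axiom in $(\RP + \idap)^{\tau''}$ would then only give congruence of the conjunction, not of $\phi$ or $\phi'$ separately. Since $=_{\tau_0}$ can be a nontrivial defined equivalence, that is insufficient to entail $\idap^\tau$. Smuggling $C$ in as a closed conjunct of ${\sf P}_{\tau''}$ is harmless for congruence, because closed sentences appear unchanged on both sides of the congruence implication, while letting us extract the required identity axioms from a single application of the $\sf P$-axiom.
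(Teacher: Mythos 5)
Your proof is correct, and it follows the same basic strategy as the paper --- take $\tau_0$ on the arithmetical vocabulary and let ${\sf P}_{\tau''}$ be a conjunction built from ${\sf P}_\tau$ and ${\sf P}_{\tau'}$ --- but it repairs a genuine gap that the paper glosses over. The paper simply sets ${\sf P}_{\tau^\star}(\vv x) := {\sf P}_\tau(\vv x) \wedge {\sf P}_{\tau'}(\vv x)$ and asserts the deductive equivalence without comment. As you observe, the direction $(\RP + \idap)^{\tau^\star} \vdash (\RP + \idap)^{\tau}$ is problematic: $\idap^{\tau^\star}$ only yields congruence of the \emph{conjunction} ${\sf P}_\tau \wedge {\sf P}_{\tau'}$ with respect to $=_{\tau_0}$, and when $=_{\tau_0}$ is a genuinely defined equivalence relation this does not entail congruence of ${\sf P}_\tau$ alone (consider a model with $E$-equivalent $a, b$ where ${\sf P}_\tau$ holds at $a$ only and ${\sf P}_{\tau'}$ at $b$ only; the conjunction-congruence is vacuous at $(a,b)$ while the ${\sf P}_\tau$-congruence fails). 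Hence $\idap^{\tau}$ --- which $U$ proves --- need not follow from $(\RP+\idap)^{\tau^\star}$.

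Your fix of adjoining the closed conjunct $C := \idap^\tau \wedge \idap^{\tau'}$ to ${\sf P}_{\tau''}$ is exactly the right move: it is congruence-neutral because $C$ is a sentence, and it lets you recover $\idap^\tau$ and $\idap^{\tau'}$ from any single instance ${\sf P}_{\tau''}(\num n)$ in the forward direction, while costing nothing in the reverse direction since $U$ and $V$ together already prove both. One cosmetic note: $\idap^\tau$ is already a single sentence (a conjunction by the paper's convention), so the outer $\bigwedge$ in your definition of $C$ is redundant. The same identity-axiom subtlety is also lurking in the paper's ``alternative'' reformulation via $\RR_0[\tau_0; {\sf P}_\tau, {\sf P}_{\tau'}; \omega, \omega]$ and in the proof of the preceding theorem, where the conclusion is stated for $\RP^\tau$ rather than $(\RP+\idap)^\tau$; your construction would similarly tighten those arguments.
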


\begin{proof}
    We take as witnessing translation $\tau^\star$ for $U\cup V$, the translation $\tau_0$ on the arithmetical vocabulary and
    ${\sf P}_{\tau^\star}(\vv x) := ({\sf P}_\tau(\vv x) \wedge {\sf P}_{\tau'}(\vv x))$. Alternatively, we note that
    $U \cup V$ is deductively equivalent to $\RR_0[\tau_0; {\sf P}_\tau(\vv x),{\sf P}_{\tau'}(\vv x);\omega,\omega]$.
\end{proof}

\section{Witness Comparisons and Fixed Points}\label{wicomp}
In this section, we give the basic definitions and facts for witness comparison. Moreover, we discuss the
G\"odel Fixed Point Lemma and its interaction with witness comparison.

\subsection{Comparing the Witnesses}
We define witness comparison.
Let $\phi := \exists x\, \phi_0(x)$ and $\psi := \exists y\, \psi_0(y)$. We define:
\begin{itemize}
    \item $\phi \bleq \psi :\iff \exists x\, (\phi_0(x) \wedge \forall y < x\, \neg\,\psi_0(y))$,
    \item $\phi < \psi :\iff \exists x\, (\phi_0(x) \wedge \forall y \bleq x\, \neg\,\psi_0(y))$,
    \item $(\phi \bleq \psi)^\bot := (\psi<\phi)$,
    \item $(\phi < \psi)^\bot := (\psi\bleq \phi)$.
\end{itemize}

%We note that if $\sigma$ and $\sigma'$ are pure 1-$\Sigma^0_1$-sentences, then so are $\sigma \bleq \sigma'$ and $\sigma< \sigma'$.
We have to do some preliminary work to compensate for the fact that in $\RR_0$ we are lacking  the axiom {\sf R}5 which says, for every numeral $n$, that
$x \leq \num n \vee\num n\leq x$. We say that $x$ is \emph{well-behaved} or ${\sf wb}(x)$ iff it satisfies {\sf A}1 and {\sf A}2 of the definition of certification, i.o.w.,
${\sf wb}(x)$ iff $0 \leq w$ and $\forall  y \bles x\; \s y \leq x$. We say that a sentence is \emph{well-behaved} if it is of the form $\exists x\, ({\sf wb}(x) \wedge \psi(x))$.

\begin{rem}
The idea of well-behavedness, though not the name, is due to Cobham, see
Jones and Shepherdson \cite{JS83}. 

In fact, for our purposes, we could also have worked with \emph{certified} in stead of
\emph{well-behaved}, but we found it attractive to use the lightest possible means to obtain the results.
\end{rem}
We have the following lemma.
\begin{lem}\label{heelkleinesmurf}
\begin{enumerate}[a.]
\item
$\RR_0 \vdash {\sf wb}(x) \to (x< \num n \vee \num n \leq x)$.
\item
$\RR_0 \vdash {\sf wb}(x) \to (x\leq \num n \vee \num n < x)$.
\end{enumerate}
\end{lem}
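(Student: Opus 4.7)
The plan is to prove (a) and (b) by external simultaneous induction on $n$, using only R3, R4, R5' of $\RR_0$ together with the two clauses $0 \leq x$ and $\forall y < x\, \s y \leq x$ of ${\sf wb}(x)$. Throughout, the work will be entirely at the level of concrete numerals, since $\RR_0$ does not give us transitivity of $\leq$ on arbitrary elements.

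For the base case $n = 0$, clause (a) is $x < 0 \vee 0 \leq x$ and is immediate from the first conjunct of ${\sf wb}(x)$. Clause (b) is $x \leq 0 \vee 0 < x$; here I split classically on $x = 0$ versus $x \neq 0$. If $x = 0$, then R5' gives $0 \leq 0$, whence $x \leq 0$. If $x \neq 0$, then ${\sf wb}(x)$ yields $0 \leq x$, which combined with $0 \neq x$ gives $0 < x$.

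In the inductive step, to establish (a) for $n+1$ I apply (b) at $n$. In the subcase $x \leq \num n$, axiom R4 produces an $i \leq n$ with $x = \num i$; then R5' gives $x = \num i \leq \num{n+1}$, and R3 gives $x \neq \num{n+1}$, so $x < \num{n+1}$. In the subcase $\num n < x$, instantiating the second clause of ${\sf wb}(x)$ at $y := \num n$ directly yields $\num{n+1} = \s\num n \leq x$. To establish (b) for $n+1$ I apply (a) for $n+1$ just proved: if $x < \num{n+1}$, then trivially $x \leq \num{n+1}$; if $\num{n+1} \leq x$, a further classical split on $x = \num{n+1}$ gives either $x \leq \num{n+1}$ (via R5' applied to $\num{n+1} \leq \num{n+1}$) or else $\num{n+1} < x$.

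The main obstacle is precisely the absence in $\RR_0$ of transitivity of $\leq$ on arbitrary elements, which rules out the naive chain $x \leq \num n \leq \num{n+1}$. This forces the detour through R4 to reduce $x$ to a specific numeral $\num i$, after which R5' handles the numeral-to-numeral comparison and R3 handles the required distinctness. The second clause of ${\sf wb}(x)$ is what performs the essential \emph{reaching} step from $\num n < x$ to $\num{n+1} \leq x$, compensating for the missing totality axiom R5 of the larger theory $\RR$.
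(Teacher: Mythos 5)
Your proof is correct and takes essentially the same route as the paper: the paper proves (a) by a simple induction on $n$ (whose inductive step performs the same classical case split you encode as clause (b) at $n$, using R4 to reduce $x$ to a numeral, R5$'$/R3 for the numeral comparisons, and the second clause of $\mathsf{wb}(x)$ for the step from $\num n < x$ to $\num{n+1} \leq x$), and then obtains (b) immediately from (a). Organizing this as a simultaneous induction on (a) and (b), as you do, is an equivalent packaging of the same argument.
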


\begin{proof}
We prove (a) by a simple induction on $n$ and (b) is immediate from (a).
\end{proof}
It follows that:
\begin{lem}\label{hulpsmurf}
 Suppose $\sigma$ and $\sigma'$ are 1-$\Sigma_1$-sentences and $\sigma'$ is well-behaved.
  \begin{enumerate}[a.]
        \item 
        If  $\N \models \sigma\bleq \sigma'$, then $\RR_0 \vdash  \neg\, \sigma'\bles\sigma$. 
                       \item 
       If  $\N \models \sigma\bles \sigma'$, then, $\RR_0 \vdash  \neg\, \sigma'\bleq\sigma$.
    \end{enumerate}
\end{lem}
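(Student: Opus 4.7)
The plan is to work inside $\RR_0$ in parallel for both parts. Write $\sigma = \exists x\,\sigma_0(x)$ and $\sigma' = \exists y\,\sigma'_0(y)$, noting that well-behavedness of $\sigma'$ means $\sigma'_0(y)$ entails $\mathsf{wb}(y)$. Unwinding the definitions of witness comparison and $(\cdot)^\bot$, the two targets become, respectively, $\RR_0 \vdash \forall y\,(\sigma'_0(y) \to \exists x \bleq y\, \sigma_0(x))$ for (a) and $\RR_0 \vdash \forall y\,(\sigma'_0(y) \to \exists x \bles y\, \sigma_0(x))$ for (b).

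First I extract the arithmetical data from the hypotheses. In (a), $\N \models \sigma \bleq \sigma'$ provides a specific $n \in \omega$ with $\N \models \sigma_0(\num n)$ and $\N \models \neg \sigma'_0(\num m)$ for every $m < n$; in (b), the analogous $n$ satisfies $\N \models \neg \sigma'_0(\num m)$ for every $m \bleq n$. Since $\sigma_0$ and $\sigma'_0$ are (pure) $\Delta_0$, so are their negations, so $\Sigma_1$-completeness (Theorem~\ref{SCompl}) upgrades these facts about $\N$ to $\RR_0 \vdash \sigma_0(\num n)$ and $\RR_0 \vdash \neg \sigma'_0(\num m)$ for each relevant $m$.

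Next, I work inside $\RR_0$, fix $y$ with $\sigma'_0(y)$, hence $\mathsf{wb}(y)$, and apply Lemma~\ref{heelkleinesmurf}(b) to case-split on $y \bleq \num n \vee \num n \bles y$. The ``large'' case $\num n \bles y$ closes immediately: the provable $\sigma_0(\num n)$ together with $\num n \bleq y$ (for (a)) or $\num n \bles y$ (for (b)) delivers the required witness. In the ``small'' case $y \bleq \num n$, axiom \textsf{R4} forces $y = \num i$ for some $i \bleq n$, so $\sigma'_0(\num i)$ holds. For (b), this contradicts the provable $\neg \sigma'_0(\num i)$ for every $i \bleq n$, so this case cannot arise. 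For (a), the contradiction handles $i < n$, while the residual boundary case $i = n$ is dispatched by observing that then $y = \num n$ and $\num n \bleq \num n$ holds via \textsf{R5'}, so $\sigma_0(\num n)$ again supplies the witness.

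The main subtlety to keep straight is the strict versus non-strict asymmetry between (a) and (b), which is reflected in exactly how the boundary index $i = n$ is treated: in (b) the hypothesis preemptively rules out $\sigma'_0(\num n)$, while in (a) we allow $y = \num n$ but close using $\num n \bleq \num n$ from \textsf{R5'}. Both halves hinge on well-behavedness, since Lemma~\ref{heelkleinesmurf} is the only instrument inside $\RR_0$ that compares an arbitrary $y$ with a standard numeral $\num n$, a comparison that is otherwise unavailable because $\RR_0$ lacks the connectivity axiom \textsf{R5}.
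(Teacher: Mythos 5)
Your proof is correct and takes essentially the same route as the paper's: extract $\RR_0 \vdash \sigma_0(\num n)$ and $\RR_0 \vdash \neg\sigma'_0(\num m)$ from the true witness comparison via $\Sigma_1$-completeness, then use Lemma~\ref{heelkleinesmurf} to case-split on a putative witness $y$ of $\sigma'$ against $\num n$, and dispatch the bounded case with {\sf R4}. The only cosmetic differences are that you argue the universal directly rather than by reductio, and you invoke clause (b) of Lemma~\ref{heelkleinesmurf} where the paper uses (a), which introduces the extra boundary case $i=n$ in part (a) that you correctly close with ${\sf R5'}$.
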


\begin{proof}
Let $\sigma = \exists x\,\sigma_0(x)$ and $\sigma' = \exists y\,({\sf wb}(y) \wedge \sigma_0'(y))$.

\emph{We treat \textup(a\textup).}
Suppose $\sigma\bleq \sigma'$ is true in the natural numbers. Then, for some $n$, we have
$\RR_0 \vdash \sigma_0(\num n)$. Moreover, for all $k< n$, we have (\dag) $\RR_0\vdash \neg\,\sigma_0'(\num k)$. 
We reason in $\RR_0$. Suppose $ \sigma'\bles\sigma$. 
Then, for some well-behaved $y$, we have $\sigma_0'(y)$ and (\ddag) $\forall z\bleq y\, \neg\, \sigma_0(z)$.
By Lemma~\ref{heelkleinesmurf}, we find that $y < \num n$ or $\num n \leq y$. 
The first disjunct contradicts (\dag) and the second disjunct contradicts (\ddag).

The proof of (b) is similar.
\end{proof}

We prove the result that gives us the desired applications.
\begin{thm}\label{ijverigesmurf}
%Let $\sigma$ and $\sigma'$ be 1-$\Sigma_1$-sentences and $\rho$ be any $\Sigma_1$-sentence. We have:
  %  \begin{enumerate}[a.]
    %    \item 
      %  If $\N \models \sigma\bleq \sigma'$ and $\RR \vdash \rho \leftrightarrow (\sigma'<\sigma)$, then $[\rho^\sharp]$ is inconsistent.
       % \item 
        %If $\N \models \sigma< \sigma'$ and $\RR \vdash \rho \leftrightarrow (\sigma'\bleq \sigma)$, then $[\rho^\sharp]$ is inconsistent.
    %\end{enumerate}
     \begin{enumerate}[a.]
     Suppose $\sigma$ and $\sigma'$ are 1-$\Sigma_1$-sentences, $\sigma'$ is well-behaved and $\rho$ is a $\Sigma_1$-sentence. 
        \item 
        If $\N \models \sigma\bleq \sigma'$ and $\RR_0 \vdash \rho \leftrightarrow \sigma'<\sigma$, then $[\rho]$ is inconsistent.
        \item 
        If $\N \models \sigma< \sigma'$ and $\RR_0 \vdash \rho \leftrightarrow \sigma'\bleq \sigma$, then $[\rho]$ is inconsistent.
    \end{enumerate}
\end{thm}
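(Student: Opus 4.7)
The plan is to combine the Certified Extension Theorem (Theorem~\ref{CSS}) with Lemma~\ref{hulpsmurf}, keying off the observation that complementary witness comparisons cannot both hold in $\N$.

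First I would verify that $\N \models \neg\,\rho$ in both cases. For case (a), if $x$ were a witness to $\sigma \bleq \sigma'$ in $\N$ and $y$ a witness to $\sigma' < \sigma$ in $\N$, then the first clause forces $y \geq x$ (otherwise $y < x$ together with $\sigma'_0(y)$ contradicts the $\sigma \bleq \sigma'$-witness condition) while the second forces $x > y$ (otherwise $x \leq y$ together with $\sigma_0(x)$ contradicts the $\sigma' < \sigma$-witness condition), a contradiction. Hence $\N \models \neg(\sigma' < \sigma)$, and by soundness of $\RR_0$ together with $\RR_0 \vdash \rho \iff \sigma' < \sigma$, we get $\N \models \neg\,\rho$. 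Case (b) is entirely analogous, using the mutual exclusivity of $\sigma < \sigma'$ and $\sigma' \bleq \sigma$.

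Next, Theorem~\ref{CSS}(3) applied to $\N \models \neg\,\rho$ gives $[\rho] \vdash \RR_0$, and Theorem~\ref{CSS}(1) gives $[\rho] \vdash \rho$. Combining these with the assumed $\RR_0$-equivalence, which is now available inside $[\rho]$, we conclude that $[\rho]$ proves the witness comparison $\sigma' < \sigma$ in case (a), and $\sigma' \bleq \sigma$ in case (b).

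Finally, Lemma~\ref{hulpsmurf} closes the loop. In case (a), since $\sigma'$ is well-behaved and $\N \models \sigma \bleq \sigma'$, clause (a) yields $\RR_0 \vdash \neg(\sigma' < \sigma)$; in case (b), clause (b) yields $\RR_0 \vdash \neg(\sigma' \bleq \sigma)$. Each of these is available in $[\rho]$ because $[\rho] \vdash \RR_0$, so in either case $[\rho]$ proves both a witness comparison and its negation, making $[\rho]$ inconsistent. I do not foresee a substantive obstacle: the whole argument is a short orchestration of already-established facts. The only point requiring attention is the mutual exclusivity of complementary witness comparisons in $\N$, which is exactly what unlocks clause (3) of the Certified Extension Theorem.
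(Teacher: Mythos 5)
Your proof is correct and essentially mirrors the paper's own argument: both establish $\N \models \neg\rho$ (via Lemma~\ref{hulpsmurf}, or the $\N$-level mutual-exclusivity fact it implies), then invoke Theorem~\ref{CSS} to obtain $[\rho] \vdash \RR_0$ and $[\rho] \vdash \rho$, and close with a contradiction inside $[\rho]$. The paper reaches the contradiction slightly more economically by noting $[\rho] \vdash \RR_0 \vdash \neg\rho$, whereas you first re-derive by hand the $\N$-level exclusivity that Lemma~\ref{hulpsmurf} already packages (since $\N \models \RR_0$) and then pass through the witness-comparison sentence $\sigma' \bles \sigma$ and its $\RR_0$-refutation inside $[\rho]$; these are harmless detours, not gaps.
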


\begin{proof}
%    We just prove (a). Suppose $\N \models \sigma\bleq \sigma'$ and
  %  $\RR \vdash \rho \leftrightarrow (\sigma'<\sigma)$. 
    %Then, by Theorem~\ref{rrosser}, we have
    %$\RR \vdash \neg\, (\sigma'<\sigma)$, and hence $\RR \vdash \neg \, \rho$. So, by Theorem~\ref{Cob2}, we have $\RR_0 \vdash \neg\, \rho^\sharp$.
    %Since also $\N \models \neg\, \rho^\sharp$, we find, by Theorem~\ref{CSS}, that 
    %$[\rho^\sharp] \vdash \RR_0$, and, hence, $[\rho^\sharp] \vdash \neg\, \rho^\sharp$.
    %On the other hand, again by Theorem~\ref{CSS}, we have $[\rho^\sharp] \vdash \rho^\sharp$.
    %\emph{Ergo}, $[\rho^\sharp]$ is inconsistent.
    %
   We  prove (a). Suppose $\N \models \sigma\bleq \sigma'$ and
    $\RR_0 \vdash \rho \leftrightarrow \sigma'\bles\sigma$. 
    By Lemma~\ref{hulpsmurf}, we have
    $\RR_0 \vdash \neg\, \sigma'\bles \sigma$, and hence $\RR_0 \vdash \neg \, \rho$. 
    It follows that $\N \models \neg\, \rho$. So, by Theorem~\ref{CSS}, that 
    $[\rho] \vdash \RR_0$, and, hence, $[\rho] \vdash \neg\, \rho$.
    On the other hand, again by Theorem~\ref{CSS}, we have $[\rho] \vdash \rho$.
    \emph{Ergo}, $[\rho]$ is inconsistent.
\end{proof}

By a trivial adaptation of the above argument, we also have:
\begin{thm}\label{ijverigesmurf2}
Let $\sigma$ and $\sigma'$ be 1-$\Sigma_1$-sentences, where $\sigma'$ is well behaved. Let $\rho$ be any $\Sigma_1$-sentence. We have:
    \begin{enumerate}[a.]
        \item 
        If $\N \models \sigma\bleq \sigma'$ and $\RR_0 \vdash \rho \leftrightarrow \sigma'\bles \sigma$, then $\certt{\rho}$ is inconsistent.
        \item 
        If $\N \models \sigma \bles \sigma'$ and $\RR_0 \vdash \rho \leftrightarrow \sigma'\bleq \sigma$, then $\certt{\rho}$ is inconsistent.
    \end{enumerate}
\end{thm}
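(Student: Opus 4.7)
The plan is to mimic the proof of Theorem~\ref{ijverigesmurf} almost verbatim, using Theorem~\ref{CSS0} in place of Theorem~\ref{CSS}. The key point is that $\certt{\rho}$ enjoys the same three properties as $[\rho]$, with the single difference that the base theory is $\RP$ rather than $\RR_0$; but since $\RP$ extends $\RR_0$, anything derivable in $\RR_0$ is derivable in $\RP$ as well.

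For clause (a), I would begin with the assumption $\N \models \sigma\bleq\sigma'$, invoke Lemma~\ref{hulpsmurf}(a) to conclude $\RR_0 \vdash \neg\,(\sigma'\bles\sigma)$, and then use the hypothesis $\RR_0 \vdash \rho\leftrightarrow\sigma'\bles\sigma$ to obtain $\RR_0 \vdash \neg\,\rho$. Soundness of $\RR_0$ gives $\N \models \neg\,\rho$. Now Theorem~\ref{CSS0}(3) yields $\certt{\rho}\vdash\RP$, and since $\RP$ proves every axiom of $\RR_0$, we get $\certt{\rho}\vdash\RR_0$, whence $\certt{\rho}\vdash\neg\,\rho$. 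On the other hand, Theorem~\ref{CSS0}(1) gives $\certt{\rho}\vdash\rho$. Therefore $\certt{\rho}$ is inconsistent.

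Clause (b) is proved in exactly the same way, using Lemma~\ref{hulpsmurf}(b) instead of (a): from $\N\models \sigma\bles\sigma'$ we get $\RR_0\vdash \neg\,(\sigma'\bleq\sigma)$, and then $\RR_0\vdash\neg\,\rho$ by the assumed equivalence. The rest of the argument proceeds identically.

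I do not foresee any serious obstacle. The only minor subtlety worth flagging is a signature/coding one: $\rho$ is an $\LA$-sentence while $\certt{\rho}$ lives in $\LAP$, so when we argue $\certt{\rho}\vdash\rho$ and $\certt{\rho}\vdash\neg\,\rho$ we are treating $\rho$ as a sentence of the ambient $\LAP$-signature (this is harmless since $\LA\subseteq\LAP$). Everything else is a straightforward transcription of the proof of Theorem~\ref{ijverigesmurf}, which is why the authors can legitimately dispatch the result by ``a trivial adaptation of the above argument.''
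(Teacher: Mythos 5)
Your proof is correct and follows essentially the same approach as the paper intends by its phrase ``a trivial adaptation of the above argument'': replace $[\rho]$ by $\certt{\rho}$ and $\RR_0$ by $\RP$, invoke Theorem~\ref{CSS0} in place of Theorem~\ref{CSS}, and use that $\RP$ extends $\RR_0$ so that $\certt{\rho}\vdash\RP\vdash\RR_0\vdash\neg\,\rho$. The signature point you flag (that $\rho$ is an $\LA$-sentence read inside the $\LAP$-theory $\certt{\rho}$) is the only subtlety and is handled correctly.
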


\subsection{The Fixed Point Lemma}
In $\RR_0$ we can prove the representability of all recursive functions as the following lemma describes.

\begin{lem}\label{smurferella2}
For every recursive function $F$ there is a 1-$\Sigma_1$-formula $\sigma(x,y)$ such that, whenever
$F(n)=m$, we have $\RR_0 \vdash \forall y\, (\sigma(\num n,y) \iff \num m=y)$.
\end{lem}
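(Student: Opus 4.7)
The plan is to combine a Rosser-style modification of a $\Sigma_1$-representation of the graph of $F$ with a well-behavedness clause on the existential witness, in order to compensate for $\RR_0$'s lack of linearity of $\leq$.

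First I would note that since $F$ is recursive, its graph is c.e.~and hence $\Sigma_1$-definable; applying a formula-version of Proposition~\ref{pure2} (a straightforward adaptation of the proof to formulas with parameters), one obtains a pure $\Delta_0$-formula $\phi_0(x,y,z)$ such that $F(n)=m$ iff $\N\models \exists z\,\phi_0(\num n,\num m,z)$, and such that $\phi_0(x,y,z)\to y\bleq z$ is logically valid. I would then define
\[
\sigma(x,y) := \exists z\, \Bigl({\sf wb}(z) \wedge \phi_0(x,y,z) \wedge \forall y'\bleq z\,\forall z'\bleq z\,(\phi_0(x,y',z')\to y'=y)\Bigr),
\]
which is 1-$\Sigma_1$ once ${\sf wb}(z)$ is rewritten in pure $\Delta_0$ form via bounded existentials, in the style of the remark following Definition~\ref{certification}.

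For the forward direction: if $F(n)=m$, any $\N$-witness $z_0$ for $\phi_0(\num n,\num m,\cdot)$ automatically satisfies $z_0\geq m$ by the bounding condition, and the functionality of $F$ then ensures that $\sigma(\num n,\num m)$ is a true $\Sigma_1$-sentence, so Theorem~\ref{SCompl} gives $\RR_0\vdash \sigma(\num n,\num m)$. The converse $\RR_0\vdash \sigma(\num n,y)\to y=\num m$ is the substantive part. Reasoning in $\RR_0$, assume $\sigma(\num n,y)$ holds with well-behaved witness $z$; by $\Sigma_1$-completeness, $\RR_0\vdash \phi_0(\num n,\num m,\num{z_0})$. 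Applying Lemma~\ref{heelkleinesmurf}(a) twice, one splits on $\num{z_0}\bleq z$ vs.~$z\bles \num{z_0}$ and on $\num m\bleq z$ vs.~$z\bles \num m$. In the branch where both $\num{z_0}\bleq z$ and $\num m\bleq z$ hold, the uniqueness clause of $\sigma$, instantiated at $y':=\num m$ and $z':=\num{z_0}$, gives $y=\num m$. In the mixed branch $\num{z_0}\bleq z \wedge z\bles \num m$, axiom {\sf R4} forces $z=\num i$ with $i<m$, hence $\num{z_0}\bleq\num i$ gives $\num{z_0}=\num j$ with $j\bleq i<m$ by {\sf R4} again, contradicting $z_0\geq m$ via {\sf R3}. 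In the branch $z\bles\num{z_0}$, {\sf R4} forces $z=\num j$ for some $j<z_0$, and then $y\bleq \num j$ yields $y=\num k$ for some $k\bleq j$; the resulting pure $\Delta_0$-sentence $\phi_0(\num n,\num k,\num j)$ is either refutable in $\RR_0$ by applying Theorem~\ref{SCompl} to its true negation (when $k\neq m$, using $F(n)\neq k$) or forces $k=m$ by the functionality of $F$, so $y=\num m$ in every non-contradictory sub-case.

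The main obstacle is exactly this case analysis: since $\RR_0$ does not prove linearity of $\leq$, comparing the hypothetical witness $z$ with the standard numeral $\num{z_0}$ is non-trivial. The well-behavedness clause is the lightest added property that, via Lemma~\ref{heelkleinesmurf}, restores enough comparability to reduce the uniqueness problem to finitely many $\Delta_0$-sentences, which are then dispatched by $\Sigma_1$-completeness together with the functionality of $F$.
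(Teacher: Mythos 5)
Your proposal matches the paper's approach: the paper also defines $\sigma(x,y)$ as an existential over a well-behaved bound $z$, conjoined with the $\Delta_0$-body of the graph representation bounded by $z$ together with a uniqueness clause relativized to $z$, and then appeals to Lemma~\ref{heelkleinesmurf} to ``mimic the well-known proof of the analogue for $\RR$'' -- which is exactly the case analysis you spell out. The one small difference is that the paper simply conjoins $y \leq z$ explicitly rather than relying on a formula-version of Proposition~\ref{pure2} to make $\phi_0(x,y,z)\to y\leq z$ logically valid (note that the proposition only bounds the \emph{collapsed existential} variables by the new leading witness, not the free variables $x,y$, so you would in any case need to add that conjunct by hand), but this is cosmetic and your verification goes through unchanged.
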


\begin{proof}
Consider any recursive function $F$ and let $\sigma^\star(x,y) = \exists z\, \sigma^\star_0(x,y,z)$ be any 1-$\Sigma_1$-formula representing the
graph of $F$. 
% We write $\tuple{x,y}$ for the Cantor pairing. 
We take
 \begin{multline*}
 \sigma(x,y) :\iff  \exists z\, \bigl({\sf wb}(z) \wedge y \leq z \wedge \exists u \leq z\, \sigma^\star_0(x,y,u) \,
\wedge \\ \forall a \leq z\, \forall b \leq z\, (\sigma^\star_0(x,a,b) \to a = y)\bigr).
\end{multline*}
%  \begin{multline*}
%  \sigma(x,y) :\iff  \exists z\, ({\sf wb}(z) \wedge z = \tuple{y,u} \wedge   \sigma^\star_0(x,y,u) \,
% \wedge \\ \forall w \bles z\, \forall a\bles w\, \forall b \bles w\, \neg\,  ( w= \tuple{a,b} \wedge \sigma^\star_0(x,a,b))).
% \end{multline*}
We now use Lemma~\ref{heelkleinesmurf}, to mimick the well-known proof of the analogue of the Lemma for the case of $\RR$. 
\end{proof}

We can  prove the usual fixed point lemma using a representation of the substitution function provided by
Lemma~\ref{smurferella2}. However, we need a bit more.

\begin{thm}\label{puntneussmurf}
\begin{enumerate}[i.]
    \item Suppose $\sigma(x)$ is $\Sigma_1$. Then, we can find a $\Sigma_1$-formula
$\eta$ such that $\RR_0 \vdash \eta \iff \sigma(\gn \eta)$.
\item 
Suppose $\sigma(x,y)$ and $\sigma'(x,y)$ are  $\Sigma_1$-formulas. We can find $\Sigma_1$-formulas
$\eta$ and $\eta'$ such that $\RR_0 \vdash \eta \iff \sigma(\gn \eta,\gn {\eta'})$ and
$\RR_0 \vdash \eta' \iff \sigma'(\gn \eta,\gn {\eta'})$.
\end{enumerate}
\end{thm}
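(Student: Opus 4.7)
The plan is to adapt G\"odel's diagonalization to the $\Sigma_1$-setting over $\RR_0$, using Lemma~\ref{smurferella2} to internally represent the relevant substitution/diagonal functions by $\Sigma_1$-graphs. Compared to the usual construction in $\PA$, the key constraint is that both the diagonal formula \emph{and} the internal representation of substitution must be kept within the $\Sigma_1$-fragment, and that the provable uniqueness clause must actually hold in $\RR_0$.

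For part (i), I would apply Lemma~\ref{smurferella2} to the recursive unary diagonal function $d(n)$, defined by $d(n) := \gn{\psi(\num n)}$ if $n = \gn{\psi(x)}$ for some formula $\psi(x)$ (with a single free variable) and $d(n) := 0$ otherwise. This yields a 1-$\Sigma_1$-formula $D(x,y)$ with $\RR_0 \vdash \forall y\, (D(\num n, y) \iff y = \num{d(n)})$ whenever $d(n)$ is relevant. Set $\theta(x) := \exists y\, (D(x,y) \wedge \sigma(y))$, which is $\Sigma_1$, let $a := \gn{\theta(x)}$, and take $\eta := \theta(\num a)$. Since $d(a) = \gn{\eta}$, the representability clause gives $\RR_0 \vdash \eta \iff \sigma(\num{\gn{\eta}})$, as required.

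For part (ii), I would use the ternary diagonal-like function $d_2(a, n, m) := \gn{\psi(\num n, \num m)}$ if $a = \gn{\psi(x,y)}$, and $0$ otherwise, represented by a 1-$\Sigma_1$-formula $D_2(x,u,v,w)$. Define
\[
\theta(x,y) := \exists z_1\,\exists z_2\, \bigl(D_2(x,x,y,z_1) \wedge D_2(y,x,y,z_2) \wedge \sigma(z_1,z_2)\bigr),
\]
and define $\theta'(x,y)$ analogously with $\sigma'$ in place of $\sigma$. Setting $p := \gn{\theta(x,y)}$, $q := \gn{\theta'(x,y)}$, $\eta := \theta(\num p, \num q)$, and $\eta' := \theta'(\num p, \num q)$, the identities $d_2(p,p,q) = \gn{\eta}$ and $d_2(q,p,q) = \gn{\eta'}$ together with the representability of $d_2$ in $\RR_0$ deliver the two simultaneous fixed-point equivalences.

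The main obstacle is that Lemma~\ref{smurferella2} is stated only for unary recursive functions, so strictly speaking a $k$-ary analogue is required. The natural route is to observe that the proof of Lemma~\ref{smurferella2} goes through for $k$-ary functions with no essential change: well-behavedness of a bounding witness $z$ together with Lemma~\ref{heelkleinesmurf} lets one express ``the unique output below $z$'' regardless of the input arity, and the $\Sigma_1$-completeness underlying the representation argument is insensitive to arity. Going via genuine pairing would be more delicate, since pairing is not readily available in $\RR_0$; the direct $k$-ary extension sidesteps this entirely.
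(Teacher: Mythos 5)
Your construction follows what the paper calls ``a careful modification of the usual proof of the Fixed Point Lemma'': represent the diagonal function via Lemma~\ref{smurferella2} and form $\theta(x) := \exists y\, (D(x,y) \wedge \sigma(y))$. The two-fixed-point arrangement with $d_2$ in part~(ii) is structurally sound, and your remark that Lemma~\ref{smurferella2} needs (and straightforwardly admits) a $k$-ary analogue is well taken.

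The genuine gap is exactly where you assert that $\theta(x)$ ``is $\Sigma_1$.'' Since $D(x,y)$ is 1-$\Sigma_1$ and $\sigma(y)$ is $\Sigma_1$, the matrix $D(x,y) \wedge \sigma(y)$ and hence $\theta(x)$ lie only in the class the paper calls $\Sigma^\dag_1$ (formulas built from $\Sigma_1$-formulas by $\wedge$ and $\exists$), not in $\Sigma_1$ in the prenex sense the theorem requires. This distinction is precisely what the theorem has to navigate, because $\gn{\eta}$ itself occurs inside the fixed-point equation: if you only normalize the resulting $\eta$ to prenex $\Sigma_1$-form after diagonalizing, the G\"odel number changes, and $\RR_0 \vdash \eta \iff \sigma(\gn{\eta})$ no longer holds for the normalized sentence. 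A direct repair of your route is to prenex $\theta$ \emph{before} computing $a := \gn{\theta(x)}$ and then to re-verify that $\theta(\num{a})$ still $\RR_0$-provably reduces to $\sigma(\num{d(a)})$ via Lemma~\ref{smurferella2}---this works, and is the ``careful'' part your argument elides. The paper instead takes a more modular route: it internalizes the normalization map $\mathsf{norm}$ (taking a $\Sigma^\dag_1$-sentence to its prenex $\Sigma_1$-form) via Lemma~\ref{smurferella2}, forms a $\Sigma_1$-formula $\sigma'(x)$ that functions as $\sigma(\mathsf{norm}(x))$, takes the ordinary G\"odel fixed point $\eta'$ of $\sigma'$ (which is only $\Sigma^\dag_1$), and sets $\eta := \mathsf{norm}(\eta')$; then $\RR_0 \vdash \eta \iff \eta' \iff \sigma'(\gn{\eta'}) \iff \sigma(\gn{\eta})$ with $\eta$ genuinely $\Sigma_1$, and the same device gives part~(ii). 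Your write-up passes silently over the step that is the actual technical content of the theorem.
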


\begin{proof}
\emph{We treat \textup(i\textup).}

We can obtain the desired result by a careful modification of the usual proof of the Fixed Point Lemma. Alternatively, we can proceed as follows. 
Let $\Sigma_1^\dag$ be the class given by $\chi::= \sigma \mid (\chi \wedge \chi) \mid \exists v\,\chi$. Here $\sigma$ ranges over $\Sigma_1$-sentences.
We can easily rewrite a $\Sigma_1^\dag$-sentence to a $\Sigma_1$-sentence by moving the relevant  existential quantifiers out. 
The usual fixed point calculation delivers a  $\Sigma^\dag_1$-sentence using the wide scope elimination for the substitution function.
Suppose we arithmetize this normalization function say as {\sf norm}. We can represent this function in $\RR_0$ by Lemma~\ref{smurferella2}.
We can construct  a $\Sigma_1$-formula $\sigma'(x)$ that functions as $\sigma({\sf norm}(x))$. Let $\eta'$ be the ordinary G\"odel fixed point of
$\sigma'$ and let $\eta$ be the normalized form of $\eta'$. Then, we have:
\begin{eqnarray*}
    \RR_0 \vdash \eta & \iff & \eta' \\
    & \iff & \sigma'(\gn{\eta'}) \\
    & \iff & \sigma(\gn{\eta}). 
\end{eqnarray*}

The alternative proof is easily adapted to deliver also the desired double fixed point promised in (ii).
\end{proof}

We note that the trick of the alternative proof will work for all kinds of normalizations.

A disadvantage of the modified G\"odel-style fixed point construction is that it does not preserve witness comparison form.
For the results in the next sections, this is not really needed, but it may sometimes lead to less elegant formulations.
How nice it would be if we could preserve almost all forms of the original formula.
An elegant way to do this is to employ a self-referential G\"odel-numbering, which has self-reference built in.

The idea of a self-referential G\"odel numbering was introduced by
Saul Kripke in \cite[Footnote 6]{krip:outl75}. It was worked out in some detail in \cite{viss:sema04}. 
Recently, two papers appeared exploring the notion further, to wit
\cite{krip:gode23} and \cite{grab:self23}. As far as we know the only place where the idea
is truly applied is \cite{grab:inva21}.

We  follow the realization of \cite{viss:sema04}. The idea is simple. We extend $\LA$ with a fresh constant
{\sf c} and employ a standard G\"odel numbering for the extended language. We then take the standard G\"odel number $\gn{\phi({\sf c})}$ for the extended language to be
the self-referential G\"odel number of $\phi(\gn{\phi({\sf c})})$ for $\LA$. Without further measures, the resulting G\"odel numbering is not 
functional from sentences to numbers. The functionality of a G\"odel number is not strictly needed, but we opt for 
 making the numbering functional by stipulating that we take the smallest number assigned to
a sentence by our non-functional version. In  \cite{viss:sema04} it is carefully verified that, for a decent choice of the
input standard G\"odel numbering, the numbering so obtained fulfills all the desiderata of a self-referential G\"odel numbering.
We  write $\sgn{\phi}$ for the self-referential G\"odel number of $\phi$. 
The crucial property is that for any $\phi(x)$ with at most $x$ free, we can effectively find a $\psi$ with
$\psi = \phi(\sgn \psi)$.

In this paper we mostly opt for the ordinary numbering, accepting the use of Theorem~\ref{puntneussmurf} as the way to go.
We will prove Theorem~\ref{SFF} twice, once with an
ordinary numbering and once with a self-referential one in order to illustrate the use of a self-referential numbering.

\section{Vaught's Theorems Revisited}\label{vaughtagain}
We give two  proofs of Theorem~\ref{Vau1} and prove a generalization of
Theorem~\ref{Vau2}. 

\newtheorem*{Vaught1}{Theorem \ref{Vau1}}
\newtheorem*{Vaught2}{Theorem \ref{Vau2}}

\begin{Vaught1}[Vaught {\cite[5.2]{Vau62}}]
The theory $\RR_0$ is strongly effectively inseparable. 
\end{Vaught1}

\begin{proof}[First Proof.]
Let $i, j \in \omega$ be such that $\RR_{0\mathfrak{p}} \subseteq \W{i}$, $\emptyset_{\mathfrak{r}} \subseteq \W{j}$, and $\W{i} \cap \W{j} = \emptyset$. 
Let $(X, Y)$ be any effectively inseparable pair of c.e.~sets. We can clearly find 1-$\Sigma_1$-formulas  $\xi(x)$ and $\eta(x)$ that represent
$X$ respectively $Y$. We can clearly arrange that $\xi$ and $\eta$ are well-behaved.

For each natural number $n$, let $\sigma_n$ be the 1-$\Sigma_1$-sentence  $\xi(\num n) \bleq \eta(\num n)$.
We can effectively find natural numbers $i'$ and $j'$ such that we have $\W{i'} = \{n \in \omega \mid [\sigma_n] \in \W{i}\}$ 
and $\W{j'} = \{n \in \omega \mid [\sigma_n]\in \W{j}\}$.\footnote{In order to avoid notational overload, we omit the G\"odel numbering brackets
around $\sigma_n$.}
Obviously, $\W{i'} \cap \W{j'} = \emptyset$. 

Suppose $n \in X$. 
Since $X$ and $Y$ are disjoint, we find $\N \models \sigma_n$. 
By Theorem \ref{CSS}, we have $\RR_0 \vdash [\sigma_n]$,
that is, $[\sigma_n] \in \RR_{0\mathfrak{p}}$. 
Then, $[\sigma_n] \in \W{i}$, and hence $n \in \W{i'}$. 

Suppose $n \in Y$. We find $\N \models \sigma_n^\bot$.
By Theorem~\ref{ijverigesmurf}, we obtain $ [\sigma_n] \vdash \bot$, that is
$[\sigma_n] \in \emptyset_{\mathfrak{r}}$.
Hence, $[\sigma_n] \in \W{j}$, and, so, $n \in \W{j'}$. 

We have shown that $\W{i'}$ and $\W{j'}$ separate $(X, Y)$. 
By the effective inseparability of $(X, Y)$, we can effectively find a number $m^\star$ such that $m^\star \notin \W{i'} \cup \W{j'}$. 
Then, $[\sigma_{m^\star}] \notin \W{i} \cup \W{j}$. 
Thus, we have shown that $(\RR_{0\mathfrak{p}}, \emptyset_{\mathfrak{r}})$ is effectively inseparable. 
\end{proof}

\begin{proof}[Second Proof.]
Let $X$ and $Y$ be any c.e.~sets separating $\RR_{0\mf p}$ and $\emptyset_{\mf r}$.
We assume that $x \in X$ and $x\in Y$ are represented by well-behaved 1-$\Sigma_1$-formulas $\xi$ and $\eta$.
By  Theorem~\ref{puntneussmurf}, we can effectively find a $\Sigma_1$-sentence $\rho$ satisfying the following equivalence: 
$ \RR_0 \vdash \rho \leftrightarrow \eta(\certo{\rho}) \bleq \xi(\certo{\rho}) $.

Suppose $\certo{\rho} \in X$. 
Since $X$ and $Y$ are disjoint, we have $\certo{\rho} \notin Y$. 
It follows that $\mathbb N \models \xi(\certo{\rho}) \bles \eta(\certo{\rho})$ and, thus, by Theorem~\ref{ijverigesmurf},
that $\certo{\rho}$ is inconsistent, i.e., $\certo{\rho} \in \emptyset_{\mathfrak{r}}$. 
We may conclude that $X \cap \emptyset_{\mathfrak{r}}$ is non-empty. But this is a contradiction. 

Suppose $\certo{\rho} \in Y$. 
Since $X$ and $Y$ are disjoint, we have $\N \models \rho$. 
By Theorem \ref{CSS}, we find $\RR_0 \vdash \certo{\rho}$, i.e.,
$\certo{\rho} \in \RR_{0\mathfrak{p}}$. 
Thus, $Y \cap \RR_{0\mathfrak{p}}$ is non-empty. 
This is a contradiction. 

Therefore, $\certo{\rho} \notin X \cup Y$. 
Since we can find $\certo{\rho}$ effectively, we have shown that 
$(\RR_{0\mathfrak{p}}, \emptyset_{\mathfrak{r}})$ is effectively inseparable. 
\end{proof}

Let $\mc{F}$ be the set of all $\LA$-sentences having a finite model. 
Since $\RR_{0\mathfrak{p}} \subseteq \mc{F}$ and $\mathcal{F} \cap \emptyset_{\mathfrak{r}} = \emptyset$, 
we obtain the following version of Trakhtenbrot's theorem as a corollary. 

\begin{cor}[Trakhtenbrot \cite{Tra53}]
The pair $(\mathcal{F}, \emptyset_{\mathfrak{r}})$ is effectively inseparable. 
\end{cor}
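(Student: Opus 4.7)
The plan is to obtain Trakhtenbrot's theorem as a direct consequence of Theorem~\ref{Vau1} via the monotonicity of effective inseparability. The remark preceding the corollary already highlights the two needed inclusions, $\RR_{0\mathfrak{p}} \subseteq \mathcal{F}$ and $\mathcal{F} \cap \emptyset_{\mathfrak{r}} = \emptyset$, so the task reduces to combining these with the effective inseparability of $(\RR_{0\mathfrak{p}}, \emptyset_{\mathfrak{r}})$ already established.

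First I would verify the two set-theoretic facts. For $\mathcal{F} \cap \emptyset_{\mathfrak{r}} = \emptyset$: a logically refutable sentence has no models whatsoever, so in particular no finite model. For $\RR_{0\mathfrak{p}} \subseteq \mathcal{F}$: any $\phi$ provable in $\RR_0$ follows from a finite subset $S$ of the axiom set, and such an $S$ mentions only finitely many numerals. Picking $N$ larger than all of them and taking the domain $\{0, 1, \ldots, N\}$ with the standard interpretations of $0$, $\s$ (truncated at $N$), $+$, $\times$ (values outside the numerals occurring in $S$ are irrelevant), and $\leq$, one obtains a finite model of $S$, and hence of $\phi$.

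Then I would invoke the general monotonicity principle: if $(A, B)$ is an effectively inseparable pair and $A \subseteq A'$ with $A' \cap B = \emptyset$, then $(A', B)$ is also effectively inseparable. Indeed, any pair $(\W{i}, \W{j})$ separating $(A', B)$ automatically separates $(A, B)$, so feeding the very same indices $i, j$ into the effective separator for $(A, B)$ yields an element outside $\W{i} \cup \W{j}$, uniformly in $i, j$. Applying this with $A = \RR_{0\mathfrak{p}}$, $A' = \mathcal{F}$, and $B = \emptyset_{\mathfrak{r}}$ gives exactly the desired corollary.

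There is no real obstacle; the corollary is a very short packaging of Theorem~\ref{Vau1}. The only step deserving a moment's care is the finite-model construction for a finite subset of $\RR_0$, which must respect the universally quantified axiom scheme $\mathsf{R4}$ in addition to the ground axioms $\mathsf{R1}$--$\mathsf{R3}$ and $\mathsf{R5'}$. This is automatic on an initial segment $\{0, 1, \ldots, N\}$, since $\mathsf{R4}$ merely asserts that the numerals $\num 0, \ldots, \num n$ exhaust the elements below $\num n$, which is true by construction.
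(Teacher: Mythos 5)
Your proposal is correct and takes essentially the same route as the paper: the corollary follows from Theorem~\ref{Vau1} by monotonicity of effective inseparability once one observes $\RR_{0\mathfrak{p}} \subseteq \mathcal{F}$ and $\mathcal{F} \cap \emptyset_{\mathfrak{r}} = \emptyset$, which is exactly what the sentence preceding the corollary in the paper says. You merely flesh out the two standard verifications (the finite-model construction on an initial segment $\{0,\dots,N\}$ for a finite subset of $\RR_0$, and the monotonicity argument), both of which are correct.
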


We note that our version of Trakhtenbrot's Theorem is formulated for the signature $\LA$. We can generalize it to other
signatures (with at least one relation symbol with arity $\geq 2$) by the usual tricks to translate a signature of finite
signature into the signature with one binary relation symbol; see e.g. \cite[Chapter 5.5]{hodg:mode93}.
Alternatively, we can replace the use of theories-of-a-number by the use of very weak set theories as developed, e.g., in
\cite{pakh:weak19}.

We then turn to Theorem~\ref{Vau2}. 
Before generalizing Theorem~\ref{Vau2}, we introduce the following notions. 

\begin{defn}
Let $T$ be an $\mc L$-theory and $\mc X$ be a set of $\mc L$-sentences. 
\begin{enumerate}
    \item We say that $T$ is \emph{effectively half-essentially $\mc X$-incomplete} iff there exists a partial computable function $\Phi$ such that for any natural number $i$, if $\W{i}$ is a c.e.~$\mc L$-theory such that $T + \W{i}$ is consistent, then $\Phi(i)$ converges, $\Phi(i) \in \mc X$, $T \nvdash \Phi(i)$ and $\W{i} \nvdash \neg\, \Phi(i)$. 

    \item We say that $T$ is \emph{$\mc X$-creative} iff there exists a partial computable function $\Psi$ such that for any natural number $i$, if $T_{\mathfrak{p}} \cap \W{i} = \emptyset$, then $\Psi(i)$ converges, $\Psi(i) \in \mc X$, and $\Psi(i) \notin T_{\mathfrak{p}} \cup \W{i}$. 
\end{enumerate}
\end{defn}

Actually, we prove that these two notions are equivalent. 

\begin{prop}\label{X_creativity}
For any $\mc L$-theory $T$ and any set $\mc X$ of $\mc L$-sentences, the following are equivalent: 
\begin{enumerate}[a.]
    \item $T$ is effectively half-essentially $\mc X$-incomplete. 
    \item $T$ is $\mc X$-creative. 
\end{enumerate}
\end{prop}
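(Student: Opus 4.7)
}

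The plan is to prove the two implications separately. Throughout I will assume $T$ is consistent (otherwise both conditions are vacuous or trivial, depending on $\mc X$).

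\textbf{For (b)$\Rightarrow$(a),} I would argue by a simple reduction. Suppose $\Psi$ witnesses $\mc X$-creativity, and let $i$ be an index with $\W{i}$ a c.e.\ $\mc L$-theory such that $T+\W{i}$ is consistent. Define
\[
V_i := \{\psi \mid \W{i}\vdash \neg\,\psi\}.
\]
Since $\W{i}$ is c.e., so is $V_i$, and I can compute an index $j=f(i)$ with $\W{j}=V_i$. Now consistency of $T+\W{i}$ immediately yields $T_{\mathfrak p}\cap V_i=\emptyset$ (any common element would be a sentence $T$ proves and $\W{i}$ refutes), so by $\mc X$-creativity the sentence $\Psi(j)$ belongs to $\mc X\setminus(T_{\mathfrak p}\cup V_i)$. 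Unpacking, $T\nvdash \Psi(j)$ and $\W{i}\nvdash\neg\,\Psi(j)$, so setting $\Phi(i):=\Psi(f(i))$ witnesses effective half-essential $\mc X$-incompleteness.

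\textbf{For (a)$\Rightarrow$(b),} this direction is the main obstacle, because the hypothesis of (b) (namely $T_{\mathfrak p}\cap\W{i}=\emptyset$) does \emph{not} straightforwardly produce an index $j$ whose c.e.\ theory is consistent with $T$; for instance, the naive $\{\neg\,\psi\mid\psi\in\W{i}\}$ may be refutable in $T$ even when no single $\psi\in\W{i}$ is a $T$-theorem. I would overcome this via a Kleene-style \textbf{recursion-theorem} diagonalization. Given $i$, use the $s$-$m$-$n$ theorem together with the recursion theorem to produce an index $j=e(i)$ such that $\W{j}$ is the c.e.\ theory built as follows: simultaneously enumerate $\W{i}$ and simulate $\Phi(j)$; if ever $\Phi(j)$ converges to some sentence $\phi$ and $\phi$ is seen in $\W{i}$, put $\neg\,\phi$ into the axioms of $\W{j}$ and close under deduction; otherwise $\W{j}$ is the pure-logic theory.

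\textbf{The case analysis then does the work.} Assuming $T_{\mathfrak p}\cap\W{i}=\emptyset$:
\begin{itemize}
\item If $\Phi(j)$ diverges, then $\W{j}$ is the pure-logic theory, $T+\W{j}=T$ is consistent, and (a) forces $\Phi(j)$ to converge---a contradiction.
\item If $\Phi(j)=\phi\in\W{i}$, then by construction $\neg\,\phi\in\W{j}$; if $T+\W{j}$ were consistent then (a) would give $\W{j}\nvdash\neg\,\phi$, contradicting $\neg\,\phi\in\W{j}$; so $T+\W{j}$ is inconsistent, which forces $T\vdash\phi$, placing $\phi$ in $T_{\mathfrak p}\cap\W{i}$ and contradicting the hypothesis.
\item Hence the only surviving possibility is $\Phi(j)=\phi\notin\W{i}$, in which case $\W{j}$ is the pure-logic theory, so (a) applies and delivers $\phi\in\mc X$ with $T\nvdash\phi$; combined with $\phi\notin\W{i}$ this is exactly what we need.
\end{itemize}
Setting $\Psi(i):=\Phi(e(i))$ yields a partial computable function witnessing $\mc X$-creativity. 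The delicate point to verify carefully is the legitimacy of the recursion-theoretic definition of $\W{j}$: $\Phi(j)$ is a function of the index $j$ alone (it either halts or not on that input), so the fixed-point construction is well-defined and $\W{j}$ is genuinely c.e.\ and deductively closed.
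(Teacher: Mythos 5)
Your proof is correct and takes essentially the same route as the paper's: the (b)$\Rightarrow$(a) direction passes from $\W{i}$ to the set of sentences $\W{i}$ refutes (the paper writes $\W{i\mathfrak{r}}$ for your $V_i$), and the (a)$\Rightarrow$(b) direction uses the recursion theorem to build precisely the paper's self-referential c.e.\ theory $\W{k}=\{\neg\,\Phi(k)\}$ if $\Phi(k){\downarrow}\in\W{i}$ and $\emptyset$ otherwise, with the same case analysis establishing that $T+\W{k}$ is consistent and hence $\Phi(k)\notin T_{\mathfrak p}\cup\W{i}$.
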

\begin{proof}
``(a) to (b)''. 
Let $\Phi$ be a partial computable function witnessing the effective half-essential $\mc X$-incompleteness of $T$. 
Let $\W{i}$ be any c.e.~set such that $T_{\mathfrak{p}} \cap \W{i} = \emptyset$. 
By using the recursion theorem, we can effectively find a natural number $k$ from $i$ such that
\[
    \W{k} = \begin{cases} \{\neg\, \Phi(k)\} & \text{if}\ \Phi(k) {\downarrow}\ \text{and}\ \Phi(k) \in \W{i}, \\
    \emptyset & \text{otherwise.} \end{cases}
\]
If $T + \W{k}$ were inconsistent, then there would be a sentence $\varphi$ such that $T \vdash \varphi$ and $\W{k} \vdash \neg\, \varphi$. 
Since $T$ is consistent, we would have $\Phi(k) {\downarrow} \in \W{i}$ and $\W{k} = \{\neg\, \Phi(k)\}$. 
In this case, we would obtain $T \vdash \Phi(k)$. 
Hence $\Phi(k) \in T_{\mathfrak{p}} \cap \W{i}$, a contradiction. 

Thus, $T + \W{k}$ is consistent. 
By the effective half-essential $\mc X$-incompleteness of $T$, we have $\Phi(k){\downarrow} \in \mc X$, $T \nvdash \Phi(k)$, and $\W{k} \nvdash \neg\, \Phi(k)$. 
In particular, $\W{k} \nvdash \neg\, \Phi(k)$ implies $\Phi(k) \notin \W{i}$. 
Therefore the partial computable function $\Psi(i) : = \Phi(k)$ witnesses the $\mc X$-creativity of $T$. 

\medskip
``(b) to (a)''.
Let $\Psi$ be a partial computable function witnessing the $\mc X$-creativity of $T$. 
Let $\W{i}$ be any c.e.~$\mc L$-theory such that $T + \W{i}$ is consistent. 
We can effectively find a number $k$ from $i$ such that $\W{k} = \W{i \mathfrak{r}}$. 
Then, $T_{\mathfrak{p}} \cap \W{k} = \emptyset$. 
By the $\mc X$-creativity of $T$, we have $\Psi(k){\downarrow} \in \mc X$ and $\Psi(k) \notin T_{\mathfrak{p}} \cup \W{i\mathfrak{r}}$. 
Therefore, the partial computable function $\Phi(i) : = \Psi(k)$ witnesses the effective half-essential $\mc X$-incompleteness of $T$. 
\end{proof}

We proceed with a generalization of Theorem~\ref{Vau2}. 
For each $\mc L$-theory $T$, let $\cothe T : = \{\varphi \mid \varphi$ is an $\mc L$-sentence and $\varphi \vdash T\}$.

\begin{thm}\label{gen_Vaught2}
Every \gener\ c.e.~theory $T$ is $\cothe T$-creative. 
\end{thm}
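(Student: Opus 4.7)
The plan is to use Proposition~\ref{X_creativity} to reduce the claim to the equivalent assertion that $T$ is effectively half-essentially $\cothe{T}$-incomplete. So, fix a translation $\tau$ witnessing that $T$ is $\tau$-$\RP$-sourced, and let $W_i$ be a c.e.~theory with $T + W_i$ consistent. I must effectively produce a sentence $\Phi(i)$ that lies in $\cothe{T}$, is not provable in $T$, and whose negation is not derivable from $W_i$.

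First, since $T + W_i$ is a consistent c.e.~theory that interprets $\RR_0$ via $\tau$, the representability of recursive functions (Lemma~\ref{smurferella2}) and the fixed-point lemma (Theorem~\ref{puntneussmurf}) are available for $T + W_i$ through $\tau$. The standard Rosser construction therefore yields, effectively from $i$, a $\Pi_1$-sentence $\theta_i$ in the signature of $T$ such that both $T + W_i \nvdash \theta_i$ and $T + W_i \nvdash \neg \theta_i$.

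Next, let $\eta_i(x)$ be a well-behaved $\Sigma_1$-$\LA$-formula representing $W_i$-provability. Applying Theorem~\ref{puntneussmurf}(i), I construct a $\Sigma_1$-sentence $\lambda_i$ with $\RR_0 \vdash \lambda_i \iff \eta_i(\gn{\neg(\certt{\lambda_i}^\tau \wedge \theta_i)})$, and I set $\Phi(i) := \certt{\lambda_i}^\tau \wedge \theta_i$. If $\lambda_i$ were true in $\N$, then $W_i \vdash \neg \Phi(i)$ while Theorem~\ref{CSS1}(2) would force $T \vdash \certt{\lambda_i}^\tau$; combining these gives $T + W_i \vdash \neg \theta_i$, contradicting the Rosser property of $\theta_i$. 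Hence $\lambda_i$ is false, so $W_i \nvdash \neg \Phi(i)$ at once; Theorem~\ref{CSS1}(3) yields $\certt{\lambda_i}^\tau \vdash T$, whence $\Phi(i) \vdash T$ and $\Phi(i) \in \cothe{T}$; and if $T$ were to prove $\Phi(i)$, then $T$ would prove $\theta_i$, again contradicting the Rosser property, so $T \nvdash \Phi(i)$. The effectivity of $i \mapsto \Phi(i)$ is clear from the construction.

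The central obstacle is securing $T \nvdash \Phi(i)$. The naive fixed point $\lambda_i \iff \eta_i(\gn{\neg \certt{\lambda_i}^\tau})$ would already deliver a candidate $\certt{\lambda_i}^\tau$ in $\cothe{T}$ with $W_i \nvdash \neg \certt{\lambda_i}^\tau$, but for a general $\tau$-$\RP$-sourced $T$ one cannot rule out that $T$ happens to prove $\certt{\lambda_i}^\tau$ (in which case $T$ is deductively equivalent to this single sentence). The conjunction with the Rosser--G\"odel sentence $\theta_i$ of $T + W_i$ is the uniform device that repairs this: it inserts an ``independent'' piece into $\Phi(i)$ whose $T$-provability and whose $W_i$-refutability are blocked simultaneously by the undecidability of $\theta_i$ in $T + W_i$.
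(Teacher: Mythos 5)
Your proof is correct, but it takes a genuinely different route from the paper's. The paper proves $\cothe{T}$-creativity \emph{directly}: given $\W{i}$ disjoint from $T_{\mathfrak p}$, it builds a single witness-comparison fixed point $\jmath$ with $\RR_0 \vdash \jmath \iff \alpha_i(\certt{\jmath}^\tau) \bleq \PR_T(\certt{\jmath}^\tau)$ and shows $\certt{\jmath}^\tau \notin T_{\mathfrak p} \cup \W{i}$, invoking Theorem~\ref{ijverigesmurf2} (the Rosser-style interaction between certification and witness comparison developed in Section~\ref{wicomp}) to rule out $T \vdash \certt{\jmath}^\tau$. You instead pass through Proposition~\ref{X_creativity} to reduce to effective half-essential $\cothe{T}$-incompleteness, and then assemble your witness from two separately constructed ingredients: a plain (non-witness-comparison) fixed point $\lambda_i$ tied to $\W{i}$-refutability, and an independent Rosser sentence $\theta_i$ for $T + \W{i}$, conjoined as $\Phi(i) := \certt{\lambda_i}^\tau \wedge \theta_i$. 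Both work; yours needs more moving parts. The paper's version is more economical and also more self-contained in that it relies only on the $\RR_0$-witness-comparison machinery already established in Section~\ref{wicomp}, whereas your appeal to ``the standard Rosser construction'' for $T+\W{i}$ is doing real, unacknowledged work: $T+\W{i}$ interprets only $\RR_0$, not $\RR$, so obtaining an undecidable Rosser sentence requires precisely the well-behavedness trick (Lemmas~\ref{heelkleinesmurf} and~\ref{hulpsmurf}) rather than the textbook argument over $\RQ$ or $\RR$ — this is true (and goes back to Cobham and Jones--Shepherdson), but it should be flagged, since otherwise a reader could suspect a gap or a circularity with the theorem being proved. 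Your diagnosis of why the ``naive'' fixed point does not suffice (that $T$ might prove $\certt{\lambda_i}^\tau$) is exactly the obstruction that the paper's witness-comparison fixed point is designed to dodge in one stroke; your conjunction-with-Rosser device is a legitimate alternative repair. Finally, your reduction via Proposition~\ref{X_creativity} imports a use of the recursion theorem that the paper's direct argument avoids.
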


\begin{proof}
Let $T$ be a $\tau$-\gener\ c.e.~$\mc L$-theory and let $\W{i}$ be any c.e.~set such that $T_{\mathfrak{p}} \cap \W{i} = \emptyset$. 
Let $\alpha_i$ be a well-behaved 1-$\Sigma_1$-formula that represents $\W i$ and let
${\sf Pr}_T$ be a well-behaved 1-$\Sigma_1$-formula that represents provability in $T$.
By Theorem~\ref{puntneussmurf}, we can effectively find a $\Sigma_1$-sentence $\jmath$ from $i$ satisfying the equivalence
$\RR_0 \vdash \jmath \leftrightarrow \alpha_i({\certt{\jmath}^\tau})  \leq \PR_T({\certt{\jmath}^\tau}).$\footnote{As before, we omit G\"odel numbering brackets.
Note that, as an intermediate step, we
have to find a 1-$\Sigma_1$-formula $\sigma(x)$ such that $\RR_0 \vdash \sigma(\gn{\beta}) \iff \PR_T(\gn{\certt{\beta}^\tau})$.} 

Suppose $T \vdash \certt{\jmath}^\tau$. 
Then, we have $\N \models \PR_T({\certt{\jmath}^\tau}) < \alpha_i(\certt{\jmath}^\tau)$ because $T_{\mathfrak{p}} \cap \W{i} = \emptyset$. 
By Theorem~\ref{ijverigesmurf2}, $\certt{\jmath}$ is inconsistent. 
Then, we have $T \vdash \neg\, \certt{\jmath}^\tau$. 
This is a contradiction. 

Suppose $\certt{\jmath}^\tau \in \W{i}$.  
Since $T_{\mathfrak{p}} \cap \W{i} = \emptyset$, we have $\N \models \jmath$. 
By Theorem \ref{CSS1}, we find $T \vdash \certt{\jmath}^\tau$. 
Thus, $T_{\mathfrak{p}} \cap \W{i}$ is non-empty. 
This is a contradiction. 

Therefore, we obtain $\certt{\jmath}^\tau \notin T_{\mathfrak{p}} \cup \W{i}$. 
This implies that $\jmath$ is false. 
By Theorem \ref{CSS1} again, we obtain that $\certt{\jmath}^\tau \vdash T$, i.e., $\certt{\jmath}^\tau \in \cothe T$. 
Then the partial computable function $\Psi(i) : = \certt{\jmath}^\tau$ witnesses the $\cothe T$-creativity of $T$. 
\end{proof}

\begin{cor}\label{gen_Vaught2_cor}
Every \gener\ c.e.~theory $T$ is effectively half-essentially $\cothe T$-incomplete. 
\end{cor}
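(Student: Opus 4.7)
The proof is essentially immediate from the two preceding results, so the plan is just to combine them. By Theorem~\ref{gen_Vaught2}, every \gener\ c.e.~theory $T$ is $\cothe T$-creative. By Proposition~\ref{X_creativity}, applied with $\mc X := \cothe T$, the properties of being $\mc X$-creative and being effectively half-essentially $\mc X$-incomplete are equivalent. Chaining these gives the desired conclusion.

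More concretely, I would unpack the direction ``(b) to (a)'' of Proposition~\ref{X_creativity} just enough to make the derivation explicit: given the partial computable function $\Psi$ witnessing $\cothe T$-creativity of $T$ (obtained in the proof of Theorem~\ref{gen_Vaught2} as $\Psi(i):=\certt{\jmath}^\tau$ with $\jmath$ depending effectively on $i$), one sets $\Phi(i):=\Psi(k)$ where $\W{k}=\W{i\mathfrak{r}}$ is obtained from $\W i$ by the effective map $i\mapsto k$. Then, whenever $T+\W i$ is consistent, we have $T_{\mathfrak p}\cap \W k=\emptyset$, so $\Psi(k)\downarrow\in \cothe T$ and $\Psi(k)\notin T_{\mathfrak p}\cup \W{i\mathfrak r}$. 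The latter two facts give $T\nvdash \Phi(i)$ and $\W i\nvdash \neg\,\Phi(i)$ respectively, which is exactly the definition of effective half-essential $\cothe T$-incompleteness.

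There is no real obstacle here: the work has all been done in Theorem~\ref{gen_Vaught2} and Proposition~\ref{X_creativity}. The only thing worth stressing in the write-up is that Proposition~\ref{X_creativity} was stated for an arbitrary $\mc L$-theory $T$ and an arbitrary set $\mc X$ of $\mc L$-sentences, so there is no side condition to verify when instantiating it with $\mc X=\cothe T$. Hence the proof can be a single sentence citing both results.
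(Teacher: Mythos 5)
Your proof is correct and is exactly the paper's intended argument: the paper gives no explicit proof of Corollary~\ref{gen_Vaught2_cor} precisely because, as you say, it is immediate from Theorem~\ref{gen_Vaught2} together with the equivalence of Proposition~\ref{X_creativity} instantiated at $\mc X=\cothe T$. Your optional unpacking of the ``(b) to (a)'' direction is also faithful to the proof of Proposition~\ref{X_creativity}.
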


Consider a \gener\ c.e.~$\mc L$-theory $T$. 
If $U$ is a c.e.~$\mc L$-theory such that $T + U$ is consistent, then by Corollary~\ref{gen_Vaught2_cor}, we can effectively find a sentence $\varphi$ such that $\varphi \vdash T$, $T \nvdash \varphi$, and $U \nvdash \neg\, \varphi$. 
Then the $\mc L$-theory $A$ axiomatized by $\varphi$ is a proper extension of $T$ such that $A+U$ is consistent.
This shows that Corollary~\ref{gen_Vaught2_cor} is in fact a generalization of Theorem~\ref{Vau2}.

% \begin{thm}
% Consider a \gener\ theory $W$ in the signature $\mc L$ and a c.e.~$\mc L$-theory $U$.
% If $W + U$ is consistent, then there is a finitely axiomatized $\mc L$-theory $A$ extending $W$ such that
% $A+U$ is consistent.
% \end{thm}

% \begin{proof}
% Let $W$ be a $\tau$-\gener\ $\mc L$-theory and
% let $U$ be any c.e.~$\mc L$-theory such that $W + U$ is consistent. 
% By Theorem~\ref{puntneussmurf}, we have a $\Sigma_1$-sentence $\jmath$ satisfying the equivalence
% $\RR \vdash \jmath \leftrightarrow \PR_U(\gn{\neg\, \certt{\jmath}^\tau})$.\footnote{Note that, as an intermediate step, we
% have to find a 1-$\Sigma_1$-formula $\sigma(x)$ such that $\RR \vdash \sigma(\gn{\alpha}) \iff \PR_U(\gn{\neg\, \certt{\alpha}^\tau})$.}

% If $\jmath$ is true, then $U \vdash \neg\, \certt{\jmath}^\tau$. 
% On the other hand, by Theorem \ref{CSS1}, we have $W \vdash \certt{\jmath}^\tau$. 
% This contradicts the consistency of $W + U$. 
% Thus, we have $\jmath$ is false and, hence, 
% $\certt{\jmath}^\tau + U$ is consistent. 
% By Theorem \ref{CSS1} again, we obtain that $\certt{\jmath}^\tau \vdash W$. 
% Therefore, the theory $A$ axiomatized by $\certt{\jmath}^\tau$  satisfies our desiderata. 
% \end{proof}

\begin{cor}
    Every \gener\ c.e.~theory is deductively equivalent to the intersection of the deductive closures of its finite same-signature extensions.
\end{cor}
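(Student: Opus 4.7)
The plan is to apply Corollary~\ref{gen_Vaught2_cor} directly. For the easy inclusion, I note that every finite same-signature extension $A$ of $T$ satisfies $A \vdash T$, so $T_{\mathfrak{p}} \subseteq A_{\mathfrak{p}}$, and hence $T_{\mathfrak{p}}$ is contained in the intersection $\bigcap_A A_{\mathfrak{p}}$ as $A$ ranges over finite same-signature extensions of $T$.

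For the non-trivial inclusion, I will show that whenever $\varphi$ is an $\mc L$-sentence with $T \nvdash \varphi$, there exists a finite $\mc L$-theory $A$ extending $T$ such that $A \nvdash \varphi$. The key move is to set $U := T + \neg\,\varphi$. This is a c.e.~$\mc L$-theory, and $T + U$ is consistent precisely by the assumption $T \nvdash \varphi$. Applying the effective half-essential $\cothe T$-incompleteness provided by Corollary~\ref{gen_Vaught2_cor} to an index for $U$ will then yield a sentence $\psi$ with $\psi \in \cothe T$ \textup(so $\psi \vdash T$\textup), $T \nvdash \psi$, and $U \nvdash \neg\,\psi$. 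I will take $A$ to be the theory axiomatised by the single sentence $\psi$. Since $\psi \vdash T$, the theory $A$ is a finite extension of $T$ in the signature of $T$; and $U \nvdash \neg\,\psi$ is the same as the consistency of $T + \neg\,\varphi + \psi$, equivalently of $A + \neg\,\varphi$, which gives $A \nvdash \varphi$ as required.

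I do not anticipate any real obstacle: the corollary has already been packaged in exactly the right form, and the result is essentially a reformulation of effective half-essential $\cothe T$-incompleteness in the language of finite same-signature extensions. The only sanity check is to confirm that the hypothesis of Corollary~\ref{gen_Vaught2_cor} needs nothing beyond the consistency of $T + U$, which is indeed all that the statement demands.
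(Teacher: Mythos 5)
Your proof is correct and takes essentially the same route as the paper: the paper does not write out a separate proof of this corollary, but the paragraph immediately preceding it spells out exactly your argument, namely that for any c.e.~$U$ with $T+U$ consistent, Corollary~\ref{gen_Vaught2_cor} yields a finite same-signature proper extension $A$ of $T$ with $A+U$ consistent, and you specialise this to $U := T+\neg\,\varphi$. The only thing worth saying aloud is the sanity check you already flagged: $T\nvdash\varphi$ gives consistency of $T+U=T+\neg\,\varphi$, which is all that Corollary~\ref{gen_Vaught2_cor} needs, and $\psi\vdash T$ is what makes $A:=\{\psi\}$ an extension of $T$.
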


We note that not every c.e.~theory is deductively equivalent to the intersection of the deductive closures of its finite same-signature extensions.
For example, {\sf PA} has no consistent finite same-signature extensions. So, the relevant intersection would be the inconsistent $\LA$-theory.

We can define the notions `effective $\mc X$-inseparability' and `strong effective $\mc X$-inseparability' in the forms that witnesses can be found from the set $\mc X$. 
The following theorem is a special case of a theorem proved in our paper \cite{KV24}. 

\begin{thm}[Kurahashi and Visser {\cite[Theorem 5.5]{KV24}}]\label{Thm_KV}
If $T$ is $\cothe T$-creative and effectively inseparable, then $T$ is strongly effectively $\cothe T$-inseparable. 
\end{thm}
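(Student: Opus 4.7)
My plan is to reduce the desired strong effective $\cothe T$-inseparability to the $\cothe T$-creativity via a direct c.e.~construction, invoking the effective inseparability of $T$ only to handle the degenerate case where $T$ is finitely axiomatizable. Given c.e.~indices $i,j$ with $T_{\mathfrak{p}}\subseteq \W i$, $\emptyset_{\mathfrak{r}}\subseteq \W j$, and $\W i\cap \W j=\emptyset$, the goal is to produce, effectively from $(i,j)$, a sentence in $\cothe T\setminus(\W i\cup \W j)$.

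The main construction goes as follows. Since $T$ is c.e., the set $\cothe T=\{\varphi\mid \varphi\vdash T\}$ is c.e., and hence so is $\W k:=\W j\cup(\W i\cap \cothe T)$, with index $k$ effectively computable from $(i,j)$. The key calculation is
\[
T_{\mathfrak{p}}\cap \W k \;=\; (T_{\mathfrak{p}}\cap \W j)\cup(T_{\mathfrak{p}}\cap \W i\cap \cothe T) \;=\; T_{\mathfrak{p}}\cap \cothe T,
\]
using $T_{\mathfrak{p}}\cap \W j=\emptyset$ (from the disjointness of $\W i,\W j$) and $T_{\mathfrak{p}}\subseteq \W i$. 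In the generic case where $T$ is not deductively equivalent to any single sentence, $T_{\mathfrak{p}}\cap \cothe T=\emptyset$, the creativity hypothesis holds for $\W k$, and so $\Psi(k)\in \cothe T\setminus(T_{\mathfrak{p}}\cup \W k)$. Unpacking: $\Psi(k)\in \cothe T$, $\Psi(k)\notin \W j$, and $\Psi(k)\notin \W i\cap \cothe T$; combined with $\Psi(k)\in \cothe T$, the last condition forces $\Psi(k)\notin \W i$. Thus $\Psi(k)$ is the desired witness, and $(i,j)\mapsto \Psi(k(i,j))$ is the required partial computable function.

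The degenerate case, $T_{\mathfrak{p}}\cap \cothe T\neq\emptyset$, means $T$ is axiomatized by some single sentence $\tau$, which can be found effectively by enumerating $T_{\mathfrak{p}}\cap \cothe T$. Here the effective inseparability hypothesis is invoked: apply it to the c.e.~sets $\W a:=\{\chi\mid \tau\wedge \chi\in \W i\}$ and $\W b:=\{\chi\mid \tau\wedge \chi\in \W j\}$. One checks that $T_{\mathfrak{p}}\subseteq \W a$ (because $\tau\vdash \chi$ makes $\tau\wedge \chi\equiv \tau\in T_{\mathfrak{p}}\subseteq \W i$), $T_{\mathfrak{r}}\subseteq \W b$ (because $\tau\vdash \neg\chi$ renders $\tau\wedge \chi$ inconsistent, hence in $\emptyset_{\mathfrak{r}}\subseteq \W j$), and $\W a\cap \W b=\emptyset$ (from $\W i\cap \W j=\emptyset$). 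The effective inseparability witness $\chi$ then yields $\tau\wedge \chi\in \cothe T\setminus(\W i\cup \W j)$, since $\tau\wedge \chi\vdash \tau\vdash T$.

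The main obstacle I anticipate is packaging both cases into a single partial computable function, since whether $T_{\mathfrak{p}}\cap \cothe T$ is empty is not an effectively decidable property. The natural resolution is to dovetail: compute $\Psi(k)$ while simultaneously enumerating $T_{\mathfrak{p}}\cap \cothe T$, and switch to the degenerate-case construction the instant a candidate $\tau$ appears. Verifying that this dovetailing produces a correct output uniformly---and in particular that the output of the main construction can be committed to even before one can rule out the degenerate case---is the delicate step where the interplay between the two hypotheses of the theorem is most subtle.
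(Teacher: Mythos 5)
Your reduction plan has the right shape, but there is a substantive gap in the main construction, and the ``obstacle'' you flag at the end is not where the actual difficulty lies.

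\textbf{The gap.} You assert that $\cothe T = \{\varphi \mid \varphi \vdash T\}$ is c.e.\ because $T$ is c.e. This is false. The condition $\varphi \vdash T$ says that $\varphi$ proves \emph{every} axiom of $T$; for an infinitely axiomatized c.e.\ theory this is $\Pi_2$ and in general not c.e. You need $\cothe T$ to be c.e.\ precisely so that $\W k := \W j \cup (\W i \cap \cothe T)$ is a c.e.\ set with a computable index, so that the creativity function $\Psi$ applies to it. Without this, the main construction does not get off the ground---and note that the case where $\cothe T$ \emph{is} trivially c.e.\ (namely when $T$ is axiomatized by a single $\tau$, so $\cothe T = \cothe\tau$) is exactly your degenerate case. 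So the unavailability of the c.e.-ness of $\cothe T$ is concentrated in the one case you rely on it.

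\textbf{The non-obstacle.} Whether $T_{\mathfrak p} \cap \cothe T$ is empty is a property of the \emph{fixed} theory $T$, not of the input $(i,j)$. Strong effective $\cothe T$-inseparability only asks for a partial computable function of $(i,j)$, and this function may be chosen non-uniformly in $T$. So no dovetailing and no effective recognition of the case is needed: if $T$ is axiomatized by a single sentence, fix such a $\tau$ once and for all and run your degenerate construction; otherwise run the main one. Moreover, the dovetailing you sketch would not be sound even if uniformity were wanted: in the degenerate case $\Psi(k)$ may converge to an arbitrary sentence before any $\tau$ is enumerated, and membership in $\cothe T \setminus (\W i \cup \W j)$ cannot be effectively verified, so there is no safe way to ``commit'' to the main output.

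Two smaller points. Your degenerate-case verification is essentially right, but the parenthetical ``$\tau \wedge \chi \equiv \tau \in T_{\mathfrak p} \subseteq \W i$'' is not quite a justification, since $\W i$ need not be closed under logical equivalence; say instead that $T \vdash \tau$ and $T \vdash \chi$ give $T \vdash \tau \wedge \chi$, so $\tau\wedge\chi \in T_{\mathfrak p} \subseteq \W i$. And the phrase ``which can be found effectively by enumerating $T_{\mathfrak p} \cap \cothe T$'' is again implicitly assuming $\cothe T$ is c.e.; one should just fix $\tau$ by fiat. To repair the main case you need a genuinely different device---for instance a recursion-theorem fixed point $k^\star$ with $\W{k^\star} = \W j \cup (\W i \cap \{\Psi(k^\star)\})$, which folds the would-be output into the definition of $\W k$ instead of referencing the whole $\Pi_2$ set $\cothe T$---but even that needs extra care to rule out the sub-case where $\Psi(k^\star)$ lands inside $T_{\mathfrak p}$.
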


Since $\RR_0$ is effectively inseparable, every \gener\ theory is also effectively inseparable. 
Thus, Theorems \ref{gen_Vaught2} and \ref{Thm_KV} establish the following theorem which is a generalization of Theorem \ref{Vau1} and a strengthening of Theorem \ref{gen_Vaught2}. 

\begin{thm}\label{gen_Vaught3}
For any \gener\ c.e.~theory $T$, we have that $T$ is strongly effectively $\cothe T$-inseparable. 
\end{thm}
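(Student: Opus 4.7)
The plan is to combine the two hypotheses required by Theorem~\ref{Thm_KV}, namely $\cothe T$-creativity and effective inseparability of $T$, and then to invoke that theorem in a single step. The first hypothesis is supplied directly by Theorem~\ref{gen_Vaught2}, so the only real work is to verify that the pair $(T_{\mathfrak p}, T_{\mathfrak r})$ is effectively inseparable.

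For that, I would transfer effective inseparability from $\RR_0$ along the translation $\tau$ witnessing that $T$ is $\tau$-\gener. As a preparatory step, observe that $\RR_0$ is effectively inseparable: this is an immediate weakening of the strong effective inseparability established in Theorem~\ref{Vau1}, since $\emptyset_{\mathfrak r} \subseteq \RR_{0\mathfrak r}$. From this, $\RP$ inherits effective inseparability by restricting any separating c.e.~pair of $\LAP$-sentences to its $\LA$-fragment, where the effective inseparability of $\RR_0$ can be applied.

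Now, given disjoint c.e.~sets $X \supseteq T_{\mathfrak p}$ and $Y \supseteq T_{\mathfrak r}$, set $X' := \{\phi \mid \phi^\tau \in X\}$ and $Y' := \{\phi \mid \phi^\tau \in Y\}$, with $\phi$ ranging over $\LAP$-sentences. Because $T$ is deductively equivalent to $(\RP + \idap)^\tau$, a proof of $\phi$ (respectively $\neg\phi$) in $\RP$ lifts to a proof of $\phi^\tau$ (respectively $\neg\phi^\tau$) in $T$; hence $(X', Y')$ separates $(\RR_{0{\sf p}\mathfrak p}, \RR_{0{\sf p}\mathfrak r})$. An effectively found $\phi \notin X' \cup Y'$ then yields $\phi^\tau \notin X \cup Y$, establishing effective inseparability of $T$, after which Theorem~\ref{Thm_KV} closes the argument.

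The most delicate step, and the one I would expect to demand the bulk of the bookkeeping, is the refutation half of the transfer, namely $\RP \vdash \neg\phi \Rightarrow T \vdash \neg\phi^\tau$, which relies on $T \vdash \idap^\tau$. This is a free gift from the definition of \gener, where $\idap$ is explicitly bundled in, so no genuine obstacle arises and the whole proof amounts to an orchestration of previously established results.
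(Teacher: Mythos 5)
Your proposal is correct and follows the paper's own derivation exactly: obtain $\cothe{T}$-creativity from Theorem~\ref{gen_Vaught2}, obtain effective inseparability of $T$ by transferring it from $\RR_0$ along the witnessing translation $\tau$, and then apply Theorem~\ref{Thm_KV}. The paper dispatches the inseparability transfer in a single sentence; you merely spell out the pullback $X' := \{\phi \mid \phi^\tau \in X\}$, which is the intended way to cash out that sentence, so the two arguments are identical in structure.
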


Theorem \ref{gen_Vaught3} is the strongest form of the first incompleteness theorem given in the present paper. 
Of course, one can prove Theorem \ref{gen_Vaught3} directly in a similar way as described in the proof of Theorem \ref{Vau1} above.

We close this section with the following application of Theorem \ref{gen_Vaught3}. 

\begin{defn}
Let $T$ be a c.e.~$\mc L$-theory and $\mc X$ be a set of $\mc L$-sentences. 
We say that $T$ is \emph{effectively uniformly essentially $\mc X$-incomplete} iff there exists a partial computable function $\Phi$ such that for every computable sequence of consistent c.e.~extensions $U_i$ of $T$ with index $j$, we have that $\Phi(j)$ converges, $\Phi(j) \in \mc X$, and for all $i$, $U_i \nvdash \Phi(j)$ and $U_i \nvdash \neg\, \Phi(j)$. 
\end{defn}

We proved in \cite{KV24} the following theorem. 

\begin{thm}[Kurahashi and Visser {\cite[Theorem 2.9]{KV24}}]
Let $T$ be any c.e.~$\mc L$-theory and $\mc X$ be any set of $\mc L$-sentences. 
The following are equivalent:
\begin{enumerate}[a.]
    \item $T$ is effectively $\mc X$-inseparable. 

    \item $T$ is effectively uniformly essentially $\mc X$-incomplete. 
\end{enumerate}
\end{thm}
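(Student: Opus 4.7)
The plan is to establish the two implications separately by translating between pairs of disjoint c.e.\ separators of $(T_\mathfrak{p}, T_\mathfrak{r})$ and computable sequences of consistent c.e.\ extensions of $T$; once the correct correspondence is identified, both translations are direct.

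For (a) $\Rightarrow$ (b), suppose $f$ witnesses effective $\mc X$-inseparability of $T$. Given a computable sequence $(U_i)$ of consistent c.e.\ extensions of $T$ with index $j$, form the c.e.\ sets $A := \bigcup_i (U_i)_{\mathfrak{p}}$ and $B := \bigcup_i (U_i)_{\mathfrak{r}}$; these are uniformly computable from $j$ and contain $T_\mathfrak{p}$ and $T_\mathfrak{r}$ respectively, but may overlap. I would extract disjoint c.e.\ separators $A', B'$ by a simultaneous-enumeration race: when a sentence $\varphi$ enters $A$, add it to $A'$ only if it has not yet entered $B'$, and symmetrically for $B$. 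Consistency of every $U_i$ guarantees $T_\mathfrak{p} \subseteq A'$ (a $T$-theorem cannot enter $B$ without forcing some $U_i$ to be inconsistent) and dually $T_\mathfrak{r} \subseteq B'$. Since $A' \cup B' = A \cup B$, applying $f$ to indices of $A'$ and $B'$ yields a witness in $\mc X \setminus (A \cup B)$, which is undecided by every $U_i$; define $\Phi(j)$ to be this witness.

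For (b) $\Rightarrow$ (a), suppose $\Phi$ witnesses effective uniform essential $\mc X$-incompleteness. Given disjoint c.e.\ separators $\W{a} \supseteq T_\mathfrak{p}$ and $\W{b} \supseteq T_\mathfrak{r}$, construct (uniformly in $(a,b)$) a computable sequence $(V_k)$ of consistent c.e.\ extensions of $T$ by taking $V_0 := T$, by including $T + \varphi$ for each $\varphi$ enumerated in $\W{a}$, and by including $T + \neg\, \psi$ for each $\psi$ enumerated in $\W{b}$. All of these extensions are automatically consistent by the disjointness of $\W{a}$ and $\W{b}$: if $T + \varphi$ were inconsistent for $\varphi \in \W{a}$, then $T \vdash \neg\, \varphi$ would put $\varphi$ in $T_\mathfrak{r} \subseteq \W{b}$, contradicting $\W{a} \cap \W{b} = \emptyset$; the case of $T + \neg\, \psi$ is dual. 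Applying $\Phi$ to the index $\ell$ of this sequence yields $\Phi(\ell) \in \mc X$ undecided by every $V_k$. A case analysis then forces $\Phi(\ell) \notin \W{a} \cup \W{b}$: if $\Phi(\ell) = \varphi \in \W{a}$, the extension $T + \varphi$ in the sequence proves $\Phi(\ell)$, contradicting undecidedness; and the case $\Phi(\ell) \in \W{b}$ is symmetric. Setting $f(a,b) := \Phi(\ell)$ yields the required witness of effective $\mc X$-inseparability.

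The delicate point is ensuring the consistency of every extension in the sequence constructed in (b) $\Rightarrow$ (a), and this turns out to follow automatically from the disjointness of $\W{a}$ and $\W{b}$ combined with the fact that the separated sets are precisely $T_\mathfrak{p}$ and $T_\mathfrak{r}$. No Rosser-style guarding is needed here, in contrast with the analogous equivalence for the \emph{strong} variant---where the separated pair would be $(T_\mathfrak{p}, \emptyset_\mathfrak{r})$ and the extensions $T + \varphi$ for $\varphi \in \W{a}$ can genuinely be inconsistent---so there a witness-comparison or fixed-point construction in the spirit of Section~\ref{wicomp} would be required.
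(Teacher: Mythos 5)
The paper does not actually prove this theorem; it is cited from the companion paper \cite{KV24}, so there is no in-paper proof to compare against. That said, your argument is correct and is essentially the standard translation between the two notions. For (a) $\Rightarrow$ (b), the disjointification race on $A = \bigcup_i (U_i)_{\mathfrak p}$ and $B = \bigcup_i (U_i)_{\mathfrak r}$ is the right device, and the key observation that consistency of every $U_i$ forces $T_{\mathfrak p} \cap B = \emptyset$ (hence $T_{\mathfrak p} \subseteq A'$, dually $T_{\mathfrak r} \subseteq B'$) is exactly what makes it work. For (b) $\Rightarrow$ (a), the automatic consistency of $T + \varphi$ for $\varphi \in \W{a}$ and of $T + \neg\,\psi$ for $\psi \in \W{b}$, derived from disjointness together with $\W{a} \supseteq T_{\mathfrak p}$ and $\W{b} \supseteq T_{\mathfrak r}$, is the crux, and your case analysis pinning $\Phi(\ell) \notin \W{a} \cup \W{b}$ is clean. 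Two small points of polish: in the race, fix a deterministic tie-breaking rule (say, process enumeration stages one element at a time) so that $A' \cap B' = \emptyset$ holds literally rather than modulo a scheduling convention; and in building $(V_k)_{k \in \omega}$, pad with copies of $T$ so the sequence is total over $\omega$ even when $\W{a} \cup \W{b}$ is finite. Your closing remark about the strong variant---that when the second coordinate of the separated pair is $\emptyset_{\mathfrak r}$ rather than $T_{\mathfrak r}$ the consistency of $T + \varphi$ is no longer automatic, so a witness-comparison construction is genuinely needed---is accurate and is precisely the extra difficulty that distinguishes the two settings.
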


By combining this theorem with Theorem \ref{gen_Vaught3}, we obtain the following strengthening of Corollary \ref{gen_Vaught2_cor}. 

\begin{cor}
Every \gener\ c.e.~theory $T$ is effectively uniformly essentially $\cothe T$-incomplete. 
\end{cor}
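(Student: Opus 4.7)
The proof is essentially a two-step combination of machinery already in hand, so the plan is to chain these steps and verify that the hypotheses line up.

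First, I would invoke Theorem~\ref{gen_Vaught3}, which states that every \gener\ c.e.~theory $T$ is strongly effectively $\cothe T$-inseparable. The definitions given earlier in the paper make clear that strong effective $\mc X$-inseparability is at least as strong as effective $\mc X$-inseparability (the strong version asks for effective separation of the full pair $(T_{\mf p},\emptyset_{\mf r})$ rather than just of $T$ from a disjoint c.e.~set, and it requires the separating witness to lie in $\mc X$). So, from Theorem~\ref{gen_Vaught3}, one immediately extracts that $T$ is effectively $\cothe T$-inseparable.

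Next, I would feed this into the Kurahashi–Visser equivalence (Theorem 2.9 of \cite{KV24}) quoted just above the statement of our corollary. Applying the direction ``(a) $\Rightarrow$ (b)'' of that theorem with $\mc X := \cothe T$ yields precisely that $T$ is effectively uniformly essentially $\cothe T$-incomplete, which is what we need. The partial computable function $\Phi$ witnessing effective uniform essential $\cothe T$-incompleteness is obtained by unpacking the one supplied by the ``(a) $\Rightarrow$ (b)'' half of the KV equivalence, applied to the effective separation procedure produced by Theorem~\ref{gen_Vaught3}.

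Since both ingredients are stated as black boxes in the excerpt, there is no serious obstacle; the only thing to be careful about is the bookkeeping on what ``$\mc X$'' is in each invocation. The slight subtlety is to make sure that when one passes from ``strong effective $\cothe T$-inseparability'' to ``effective $\cothe T$-inseparability'', the witnesses produced still land inside $\cothe T$ — but this is immediate from the definitions, because ``strong'' only adds the requirement on the shape of the witness, and this requirement is preserved when we weaken to the plain effective inseparability hypothesis needed by the KV theorem.
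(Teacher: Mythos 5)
Your argument matches the paper's exactly: it also derives the corollary by noting that Theorem~\ref{gen_Vaught3} gives strong effective $\cothe T$-inseparability (hence effective $\cothe T$-inseparability) and then applies the (a)~$\Rightarrow$~(b) direction of the quoted Kurahashi--Visser equivalence with $\mc X := \cothe T$. The bookkeeping you flag, that the witnesses stay in $\cothe T$ when weakening ``strong'' to ``plain'' effective inseparability, is indeed the only point to check and is immediate from the definitions.
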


\section{Various Facts about Degree Structures}\label{degrees}

In this section, we provide various applications of our framework to degrees of interpretability.

\subsection{Useful Insights}
We first remind the reader of a special property of $\RR$.
\begin{thm}[Visser {\cite[Theorem 6]{Vis14}}]\label{Visser2}
For any c.e.~theory $T$, we have that $\RR \TR T$ if and only if every finite subtheory of $T$ has a finite model. 
\end{thm}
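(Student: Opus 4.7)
The plan is to prove the two implications separately, using rather different techniques in each case. The forward direction is largely routine, following from the fact that every finite subtheory of $\RR$ admits a finite model; the reverse direction requires an explicit construction of an interpretation of $T$ in $\RR$, built around representability of computable functions and the strength that $\RR$ has over $\RR_0$ (namely axiom {\sf R}5).

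For the forward direction, suppose $\tau$ witnesses $\RR \TR T$, i.e., is a translation with $\RR \vdash \phi^\tau$ for every axiom $\phi$ of $T$. Given any finite subtheory $T_0 = \{\psi_0,\ldots,\psi_k\} \subseteq T$, a compactness-of-proof argument yields a finite $\RR^\ast \subseteq \RR$ proving each $\psi_i^\tau$. The finitely many instances of {\sf R}1--{\sf R}5 appearing in $\RR^\ast$ mention only a bounded set of numerals $\num{m_1},\ldots,\num{m_\ell}$. I would exhibit a finite model $\mc M$ of $\RR^\ast$ with underlying set $\{0,1,\ldots,N\}$ for sufficiently large $N$, interpreting each referenced numeral as itself and using the natural arithmetic (suitably capped above $N$ so that only the specific axiom instances occurring in $\RR^\ast$ need to remain valid). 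Passing $\mc M$ through $\tau$ then yields a finite model of $T_0$ as the definable $\tau$-image.

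For the reverse direction, enumerate the axioms of $T$ as $\phi_0,\phi_1,\ldots$; the hypothesis plus effective search yields a computable $f$ where $f(n)$ is a G\"odel code of a finite model $M_n$ of $\{\phi_0,\ldots,\phi_n\}$. Using the $\RR$-analogue of Lemma~\ref{smurferella2}, I would represent $f$ by a well-behaved $1$-$\Sigma_1$-formula $F(n,y)$. I would then build a parameter interpretation $\tau$ of $T$ in $\RR$ in which the parameter $p$ acts as a ``stage'': the domain $\delta_\tau(x,p)$ asserts that $x$ is an element of the model coded by $f(p)$, and each $R_\tau(\vec x, p)$ asserts that the decoded model satisfies $R(\vec x)$; equality in the interpretation is handled by a definable equivalence relation on coded pairs so that the coded model's identity need not coincide with the ambient one. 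The central verification is that, for each axiom $\phi_n$ of $T$, $\RR \vdash \forall p\, (\num n \leq p \to \phi_n^\tau(p))$, proved by case-splitting on $p \leq \num k$ versus $\num k \leq p$ for the relevant small numerals via {\sf R}5 and invoking $\Sigma_1$-completeness for the concrete numerical subcases, where $\phi_n^\tau(\num k)$ (for $k\geq n$) is a true $\Sigma_1$-statement about the finite object $M_k$.

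The main obstacle is organizing the backward construction so that a single interpretation $\tau$ verifies every axiom of $T$ simultaneously, rather than giving merely a local interpretation per finite subtheory. No standard numeral dominates every $\num n$, so the parameter cannot be chosen explicitly in the standard model; but interpretability is a syntactic notion, and axiom {\sf R}5 (the very feature distinguishing $\RR$ from $\RR_0$) combined with the $\Sigma_1$-representability of $f$ is precisely what makes the required implications $\num n \leq p \to \phi_n^\tau(p)$ provable uniformly in $n$. Semantically this corresponds to the existence of parameters above all numerals in non-standard models of $\RR$. A secondary technical point will be ensuring the decoding formulas for $f(p)$ are robust against the weakness of $\RR$'s bounded reasoning; the certificate machinery from Section~\ref{ce} provides a natural tool for taming the decoding, since below a certified (or well-behaved) element everything behaves number-like enough for the decoding to work as expected.
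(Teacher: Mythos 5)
The paper does not prove this theorem; it is imported from \cite{Vis14} as a citation, so there is no in-paper argument to compare against. Your forward direction is the standard compactness-of-proof argument and is fine modulo routine bookkeeping about non-empty domains. The backward direction, however, has a genuine gap, and in fact the architecture of the interpretation you describe cannot work.

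The obstruction is semantic. Suppose you had a translation $\tau$ with a single parameter formula $\alpha(p)$ such that $\RR \vdash \exists p\,\alpha(p)$ and, for each axiom $\phi_n$ of $T$, $\RR \vdash \forall p\,(\alpha(p) \to \phi_n^\tau(p))$. Since $\N \models \RR$, some \emph{standard} $p^\ast$ satisfies $\alpha$ in $\N$, and the internal structure at $p^\ast$ would then satisfy every $\phi_n$, i.e., be a model of all of $T$. But your domain formula $\delta_\tau(x,p)$ says ``$x$ is an element of the structure coded by $f(p)$,'' which for standard $p^\ast$ picks out a genuinely finite set. So the construction would force $T$ to have a finite model --- whereas the hypothesis grants only finite models for finite \emph{subtheories}, and $T$ itself may have no finite model at all (take $T=\RR_0$, or the theory of linear orders together with ``at least $n$ elements'' for all $n$). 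So the domain cannot be the universe of the finite structure coded by the parameter; the interpretation must produce an infinite internal structure already in $\N$.

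Independently, the ``central verification'' $\RR \vdash \forall p\,(\num{n} \leq p \to \phi_n^\tau(p))$ is not established by your argument. Splitting by {\sf R}5 on $p \leq \num{k}$ versus $\num{k} \leq p$ only discharges the first horn, where $p$ takes finitely many concrete numeral values, each yielding a closed true $\Sigma_1$-sentence provable by $\Sigma_1$-completeness; the second horn $\num{k} \leq p$, which carries all the content of a universal statement, is left open, and $\Sigma_1$-completeness does not apply to $\Sigma_1$-matrices with a free variable. Finally, even granting this family of per-axiom implications, you would still need a single $\alpha(p)$ that $\RR$-provably entails $\num{n}\leq p$ for \emph{every} $n$, and no such formula can be $\RR$-provably satisfiable, since it fails in $\N$. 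The slogan ``pick a parameter above all numerals'' correctly identifies where the non-standard models are used, but it cannot be cashed out as a parameter condition in the interpretation itself; the true argument has to be set up so that the internal model is infinite even when the parameter is standard.
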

We note that this property is inherited by every c.e.~theory that is mutually interpretable with $\RR$.

We have the following definition.
\begin{itemize}
\item
The theory $T$ is a \emph{globalizer} iff, for every c.e.~theory $W$, whenever $T \rhd_{\sf loc} W$, then $T \rhd W$.
\end{itemize}

Examples of globalizers are {\sf PRA}, {\sf PA}, and {\sf ZF}. Theorem~\ref{Visser2} has the following useful consequence, which also appears 
in \cite{Vis14}.

\begin{thm}\label{musmurf}
$\RR$ is a globalizer.
\end{thm}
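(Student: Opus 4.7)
The plan is to derive the globalizer property of $\RR$ as an immediate consequence of the biconditional characterization in Theorem~\ref{Visser2}, by using each direction once.

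First I would assume $\RR \rhd_{\sf loc} W$, where $W$ is an arbitrary c.e.~theory, and aim to show $\RR \rhd W$. By the ``if'' direction of Theorem~\ref{Visser2}, it suffices to verify that every finite subtheory of $W$ has a finite model. So I fix a finite subtheory $W_0$ of $W$ and try to produce a finite model of $W_0$.

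Because $\RR$ locally interprets $W$, in particular $\RR \rhd W_0$. Now I apply Theorem~\ref{Visser2} in the ``only if'' direction to $T := W_0$: since $\RR \rhd W_0$, every finite subtheory of $W_0$ has a finite model. Since $W_0$ is finite, $W_0$ itself qualifies as a finite subtheory of $W_0$, so $W_0$ has a finite model. As $W_0$ was an arbitrary finite subtheory of $W$, every finite subtheory of $W$ has a finite model, and the ``if'' direction of Theorem~\ref{Visser2} now yields $\RR \rhd W$, as desired.

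There is essentially no obstacle here beyond a careful bookkeeping of which direction of Theorem~\ref{Visser2} is invoked where; all the mathematical content has been absorbed into that prior theorem. The only thing to note is that the argument passes through finitely axiomatized intermediaries $W_0$, which is exactly the role of local interpretability, and that the characterization of $\RR \rhd (\cdot)$ via finite-model property of finite subtheories is preserved under passing from $W_0$ back to $W$.
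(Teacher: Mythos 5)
Your proof is correct. It differs from the paper's proof in a mild but genuine way: you absorb all the work into Theorem~\ref{Visser2} by invoking it in \emph{both} directions (the ``only if'' direction at the level of each finite $W_0$, and the ``if'' direction for the final conclusion), whereas the paper invokes Theorem~\ref{Visser2} only once, at the very end, and establishes that $W_0$ has a finite model by a direct chain of facts: compactness of interpretability (a finitely axiomatized $U_0$ interpreted in $\RR$ is already interpreted in some finite subtheory $A$ of $\RR$), the observation that every finite subtheory of $\RR$ has a finite model, and the fact that interpretability transports finite models. In effect the paper re-derives the special finitely-axiomatized case of the ``only if'' direction of Theorem~\ref{Visser2} on the fly, so both proofs rest on the same underlying mathematics. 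Your version is the more economical deduction, making the result a purely formal corollary of the biconditional; the paper's version is a bit more self-explanatory, because it exhibits the finite-model propagation explicitly rather than hiding it behind a second citation. Both are fine, and there is no gap in your argument.
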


\begin{proof}
    Suppose $\RR \rhd_{\sf loc} U$. Then, for every finitely axiomatized subtheory $U_0$ of $U$, we have $\RR \rhd U_0$. 
    So, for every finitely axiomatized subtheory $U_0$ of $U$, there is a finitely axiomatized subtheory $A$ of $\RR$,
    such that $A \rhd U_0$. Since $A$ has a finite model, so has $U_0$. We may conclude that
    every finitely axiomatized subtheory $U_0$ of $U$ has a finite model. \emph{Ergo}, $\RR \rhd U$.
\end{proof}

Cobham has shown that $\RR_0$ is mutually interpretable with $\RR$. See \cite{JS83}. It follows that the insights contained in
Theorems~\ref{Visser2} and \ref{musmurf} are inherited by $\RR_0$. 

We give two useful results.  We assume that we have a $\Sigma_1$-representation of 
interpretability $\rhd$ for the case that the interpreted theory is finitely axiomatised and the interpreting theory is computably enumerable.
For later use we also assume that this representation is well-behaved. \emph{Par abus de langage}, we write $\rhd$ both for the meta-notion 
and for its theory-internal representation.

\begin{thm}\label{loeblike}
Let $W$ be a $\tau$-\gener\ c.e.~theory and let $A$ be finitely axiomatized.
We can effectively find a $\Sigma_1$-sentence $\lambda$ from an index of $W$, such that $\N \models \lambda$, 
$\certt{\lambda}^\tau \rhd A$, and $W \rhd A$ are equivalent. 
\end{thm}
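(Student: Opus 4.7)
The plan is a Löb-style self-reference construction, tailored to the certified $\Sigma_1$-framework. Using Theorem~\ref{puntneussmurf}(i), I effectively obtain a $\Sigma_1$-sentence $\lambda$ satisfying
\[
\RR_0 \vdash \lambda \iff \certt{\lambda}^\tau \rhd A.
\]
The right-hand side is genuinely a $\Sigma_1$-formula in $\gn{\lambda}$: the map $\eta \mapsto \gn{\certt{\eta}^\tau \rhd A}$ is primitive recursive, and, by assumption, the representation of $\rhd$ between two finitely axiomatised theories is $\Sigma_1$. Since this construction depends effectively on the index of $W$ (which carries $\tau$) and on $A$, the effectivity clause of the theorem is automatic.

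The core work is then the cyclic chain of implications $\N\models\lambda \Rightarrow \certt{\lambda}^\tau \rhd A \Rightarrow W\rhd A \Rightarrow \N\models\lambda$. The first implication is immediate: if $\N\models\lambda$, then the fixed point (true in $\N$ by soundness of $\RR_0$) gives $\N\models \certt{\lambda}^\tau \rhd A$, and $\Sigma_1$-soundness of the representation of $\rhd$ yields the meta-level statement $\certt{\lambda}^\tau \rhd A$. For the second implication, $\certt{\lambda}^\tau \rhd A$ is a true $\Sigma_1$-sentence, hence $\N\models\certt{\lambda}^\tau \rhd A$, and the fixed point pulls back to $\N\models\lambda$; then Theorem~\ref{CSS1}(2) yields $W \vdash \certt{\lambda}^\tau$, so $W$ interprets $\certt{\lambda}^\tau$ via the identity, and composing with the hypothesised interpretation $\certt{\lambda}^\tau \rhd A$ gives $W\rhd A$.

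The third implication is where the certified machinery really pays off. Suppose $W \rhd A$ and, for contradiction, $\N \models \neg\,\lambda$. Then Theorem~\ref{CSS1}(3) provides $\certt{\lambda}^\tau \vdash W$, and composing this with the given $W\rhd A$ yields $\certt{\lambda}^\tau \rhd A$. This is again a true $\Sigma_1$-sentence, so the fixed point forces $\N\models \lambda$, contradicting our assumption. Hence $\N\models \lambda$, closing the cycle.

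The main subtlety, and the step that I expect to bear closest scrutiny, is the interaction between the meta-notion $\rhd$ and its $\Sigma_1$-representation: I am using twice that a truly holding $\Sigma_1$-instance of the representation corresponds to an actual interpretation, and twice that composition of interpretations (identity $\circ$ given, or given $\circ$ given) lifts correctly. Because both arguments of $\rhd$ are finitely axiomatised throughout the argument, and because the representation is well-behaved as stipulated just before the theorem, these transitions are standard — but they are the only places where one could lose the correspondence between the formal and the real, and so deserve to be stated explicitly in the final write-up.
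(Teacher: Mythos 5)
Your proof is correct and follows essentially the same strategy as the paper: the Gödel fixed point $\RR_0 \vdash \lambda \iff (\certt{\lambda}^\tau \rhd A)$ obtained from Theorem~\ref{puntneussmurf}, the appeal to Theorem~\ref{CSS1}(2) to get $W \vdash \certt{\lambda}^\tau$ when $\N\models\lambda$, and the reductio via Theorem~\ref{CSS1}(3) plus composition of interpretations in the $W\rhd A$ direction. The paper's write-up is slightly terser (it organizes the argument as two implications rather than a three-step cycle, and leaves the soundness/$\Sigma_1$-representation bookkeeping implicit), but the underlying reasoning is identical.
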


\begin{proof}
Let $W$ be $\tau$-\gener\ and let $A$ be finitely axiomatized.
By Theorem~\ref{puntneussmurf}, we obtain a $\Sigma_1$-sentence $\lambda$ satisfying the following equivalence: 
\[
    \RR_0 \vdash \lambda \leftrightarrow \certt\lambda^\tau \TR A. 
\]

Suppose $\certt\lambda^\tau \TR A$. 
Then, we have $\N \models \lambda$ and, thus, $W \vdash \certt\lambda^\tau$ by Theorem \ref{CSS1}. Hence, $W\TR A$.

Conversely, suppose that $W \TR A$. If $\N \models \neg\, \lambda $, then we have $\certt\lambda^\tau \vdash W$ by Theorem \ref{CSS1}, and, hence, 
$\certt\lambda^\tau \TR A$. 
By the fixed point equation, we find $\N \models \lambda$, contradicting our assumption that $\N \models \neg\,\lambda$.
So, we may conclude that $\N \models \lambda$, and, thus, $\certt\lambda^\tau \TR A$.
\end{proof}

\begin{rem}
The proof of Theorem~\ref{loeblike} is strongly reminiscent of the proof of L\"ob's Theorem.
Regrettably, it does not seem that we can take the further step to obtain an analogue of L\"ob's Theorem, to wit:
\[W \rhd A\text{ iff }(W+\certt{W\rhd A}^\tau) \rhd A.\] The left-to-right direction is trivial, but we do not know about the right-to-left direction at the moment.
\end{rem}

\begin{ex}
    Juvenal Murwanashyaka asked whether there is a finitely axiomatized theory $B$ that interprets 
{\sf VS} but does not interpret {\sf AS}. Theorem~\ref{loeblike} provides an example that, additionally, is a
same-signature extension of {\sf VS}.

We can see this as follows.
Since {\sf VS} is $\RR_{0{\sf p}}$-sourced, we have, by Theorem~\ref{loeblike}, a finite same-signature-theory $B$ ($= \certt{\lambda}^\tau)$, such that $B\rhd {\sf AS}$ iff ${\sf VS} \rhd {\sf AS}$. However, ${\sf VS} \nrhd {\sf AS}$, since, otherwise,
a finite subtheory of {\sf VS} would interpret {\sf AS}. Such finite subtheories are interpretable in the theory of non-surjective pairing,
i.e., ${\sf VS}0+{\sf VS}2$, and, as is well-known this theory has a decidable extension. On the other hand, {\sf AS} is essentially
undecidable.
Since $\N \models \neg \lambda$, we have that $B$ is an extension of ${\sf VS}$ by Theorem \ref{CSS1}. 

Similarly, we can specify a finitely axiomatized same-signature extension of {\sf PRA} that does not interpret 
$\mathrm I\Sigma_1$.
\end{ex}

\begin{cor}\label{useful2}
Let $W$ be a $\tau$-\gener\ c.e.~theory, and let $T$ be any c.e.~theory. 

\begin{enumerate}[i.]
    \item 
    Suppose
     $W \NTR_{\sf loc} T$. Then,
     there is a false $\Sigma_1$-sentence $\lambda$ such that $\certt\lambda^\tau \NTR_{\sf loc} T$.
     \item
     Suppose $W$ is a globalizer and $W \NTR T$.
     Then,  there is a false $\Sigma_1$-sentence $\lambda$ such that $\certt\lambda^\tau \NTR T$.
\end{enumerate}
\end{cor}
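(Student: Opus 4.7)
The plan is to derive both clauses as essentially immediate applications of Theorem~\ref{loeblike}, the only real work being to extract a suitable finitely axiomatized witness to the failure of interpretability.

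For clause (i), I would first unpack $W \NTR_{\sf loc} T$: by definition of local interpretability this yields a finitely axiomatized subtheory $A$ of $T$ such that $W \NTR A$. Now apply Theorem~\ref{loeblike} to the $\tau$-\gener\ theory $W$ and this $A$: we obtain a $\Sigma_1$-sentence $\lambda$ with the equivalence
\[
\N \models \lambda \;\iff\; \certt{\lambda}^\tau \TR A \;\iff\; W \TR A.
\]
Since $W \NTR A$, the right-hand disjunct fails, so $\lambda$ is false and $\certt{\lambda}^\tau \NTR A$. Because $A$ is a subtheory of $T$, if $\certt{\lambda}^\tau$ locally interpreted $T$ it would in particular interpret the finitely axiomatized $A$, contradiction. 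Hence $\certt{\lambda}^\tau \NTR_{\sf loc} T$, as required.

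For clause (ii), the only additional ingredient is the globalizer hypothesis, which supplies the contrapositive form: if $W$ is a globalizer and $W \NTR T$, then $W \NTR_{\sf loc} T$. Indeed, were $W \TR_{\sf loc} T$ to hold, being a globalizer would upgrade this to $W \TR T$, contradicting our assumption. So again we obtain a finitely axiomatized subtheory $A$ of $T$ with $W \NTR A$, and Theorem~\ref{loeblike} produces a false $\Sigma_1$-sentence $\lambda$ with $\certt{\lambda}^\tau \NTR A$. Since any interpretation of $T$ would in particular interpret the subtheory $A$, we conclude $\certt{\lambda}^\tau \NTR T$.

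There is no real obstacle here: both parts follow from Theorem~\ref{loeblike} by contraposition, once the failure of interpretability has been localized to a finitely axiomatized piece of $T$. The only subtle point is recognizing where the globalizer hypothesis is needed, namely exactly to pass from failure of global interpretability to failure of local interpretability, since Theorem~\ref{loeblike} is formulated with a finitely axiomatized target.
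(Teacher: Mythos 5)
Your proof is correct and follows essentially the same route as the paper: extract a finitely axiomatized subtheory $A$ of $T$ witnessing the failure of (local) interpretability, apply Theorem~\ref{loeblike} to $W$ and $A$ to obtain a false $\lambda$ with $\certt{\lambda}^\tau \NTR A$, and then conclude by the fact that (local) interpretability of $T$ entails interpretability of $A$. The paper dispatches (ii) as ``immediate from (i)''; you instead re-run the argument, but the substance — including your correct observation that the globalizer hypothesis is what downgrades $W \NTR T$ to $W \NTR_{\sf loc} T$ — is exactly what the paper has in mind.
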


\begin{proof}
We prove (i).    
Let $W$ be $\tau$-\gener\ and let $T$ be any c.e.~theory. 
    Suppose $W \NTR_{\sf loc} T$. 
    It follows that  $W \NTR A$, for some finitely axiomatized subtheory $A$ of $T$. 
We apply Theorem~\ref{loeblike} to find a false $\Sigma_1$-sentence $\lambda$ such that $\certt{\lambda}^\tau\NTR A$.
It follows that $\certt{\lambda}^\tau \NTR_{\sf loc}T$.

(ii) is immediate from (i).
\end{proof}

Here is another result in the same spirit as Theorem~\ref{loeblike} that uses a Rosser argument.
We remind the reader of  our representation of $\rhd$ is well-behaved (under the assumption that the interpreted theory is finitely axiomatised and
the interpreting theory is c.e.).

\begin{thm}\label{rossersmurf}
Let $W$ be a $\tau$-\gener\ theory, let  $T$ be a c.e.~theory such that $T \rhd_{\sf loc} W$, and let $A$ be finitely axiomatized.
Then, there is a $\Sigma_1$-sentence $\theta$, which is $\RR_0$-provably equivalent to
a witness comparison sentence, such that the following are equivalent:

\begin{enumerate}[a.]
%\item
%$\N \models \theta \vee \theta^\bot$,
\item
$((T \owedge \certt{\theta}^\tau)\TR A)$ or $(T \TR (A \ovee \certt{\theta}^\tau))$,
\item
$ T \TR A$.
\end{enumerate}
\end{thm}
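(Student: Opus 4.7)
The plan is a Rosser-style fixed point that pits the two disjuncts of (a) against each other. Let $\alpha(n)$ and $\beta(n)$ be well-behaved 1-$\Sigma_1$-formulas representing, respectively, $(T \owedge \certt{n}^\tau) \rhd A$ and $T \rhd (A \ovee \certt{n}^\tau)$, where $n$ will be instantiated by $\gn{\theta}$; this is routine given the well-behaved $\Sigma_1$-representation of $\rhd$ assumed in the setup. By Theorem~\ref{puntneussmurf}, I take $\theta$ satisfying $\RR_0 \vdash \theta \iff \alpha(\gn{\theta}) \bles \beta(\gn{\theta})$, placing $\theta$ in strict witness-comparison form. The direction (b)$\Rightarrow$(a) is immediate: $T \rhd A$ yields the first disjunct via $T \owedge \certt{\theta}^\tau \rhd T \rhd A$, and the second via $T \rhd A \rhd A \ovee \certt{\theta}^\tau$, the latter step using $A \rhd A \ovee \certt{\theta}^\tau$ from the universal property of $\ovee$.

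For (a)$\Rightarrow$(b) I argue by contradiction from (a) and $T \NTR A$, splitting on the truth value of $\theta$. Case~I, $\N \models \theta$: then $\alpha(\gn{\theta})$ is true, so the first disjunct holds, and by Theorem~\ref{CSS1}(2), $W \vdash \certt{\theta}^\tau$. Relativizing to the split predicate of $\owedge$, $W \owedge T$ proves every axiom of $\certt{\theta}^\tau \owedge T$, so $W \owedge T \rhd \certt{\theta}^\tau \owedge T \rhd A$. Since $A$ is finite, the resulting interpretation uses only finitely many axioms of $W$, yielding $W_0 \owedge T \rhd A$ for some finite $W_0 \subseteq W$. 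Local interpretability gives $T \rhd W_0$, and trivially $T \rhd T$, so by the universal property of $\owedge$, $T \rhd W_0 \owedge T$; transitively $T \rhd A$, contradicting the assumption.

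Case~II, $\N \models \neg\theta$: a short witness analysis under (a) gives $\N \models \beta \leq \alpha$. Indeed, if $\alpha$ has a witness, the failure of $\alpha \bles \beta$ forces some $\beta$-witness at or below it; if $\alpha$ has none, (a) forces $\beta$ to have one and $\beta \leq \alpha$ holds vacuously. Either way $\beta$ has a witness, so the second disjunct also holds. Applying Theorem~\ref{ijverigesmurf2} with $\sigma := \beta$, $\sigma' := \alpha$, $\rho := \theta$ yields that $\certt{\theta}$, and hence $\certt{\theta}^\tau$, is inconsistent. In $A \ovee \certt{\theta}^\tau$ the axiom $\neg P \to \certt{\theta}^\tau$ combined with $\certt{\theta}^\tau \vdash \bot$ forces $P$, and then the axioms $P \to \phi$ (one for each $A$-axiom $\phi$) give $A \ovee \certt{\theta}^\tau \vdash A$, so $A \ovee \certt{\theta}^\tau \rhd A$ via identity. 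Together with the second disjunct, transitivity yields $T \rhd A$, again a contradiction. The trickiest point is this Case~II witness analysis, together with ensuring that $\alpha$ is well-behaved so that Theorem~\ref{ijverigesmurf2} applies; the remainder is bookkeeping around the universal properties of $\ovee$ and $\owedge$, local interpretability, and the Certified Extension framework.
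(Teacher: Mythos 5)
Your proof is correct and takes essentially the same Rosser-style approach as the paper: a witness-comparison fixed point $\theta$ pitting $(T \owedge \certt{\theta}^\tau) \rhd A$ against $T \rhd (A \ovee \certt{\theta}^\tau)$, followed by a case split resolved via Theorem~\ref{CSS1} and Theorem~\ref{ijverigesmurf2}. The differences are cosmetic only: you use $\bles$ where the paper uses $\bleq$ (both work, via the two clauses of Theorem~\ref{ijverigesmurf2}), and in Case~I you detour through $W_0 \owedge T$ where the paper more directly obtains $T \rhd \certt{\theta}^\tau$ from $W \vdash \certt{\theta}^\tau$ and $T \rhd_{\sf loc} W$.
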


\begin{proof}
Suppose  $T \TR_{\mathsf{loc}} W$.  
    By Theorem~\ref{puntneussmurf}, we obtain a $\Sigma_1$-sentence $\theta$  satisfying the following equivalence:
   \[\RR_0 \vdash \theta \leftrightarrow ((T \owedge \certt{\theta}^\tau) \TR A) \bleq (T \TR (A \ovee \certt{\theta}^\tau)).\]

Clearly (b) implies (a). We show that (a) implies (b).
Suppose we have (a). Let 
$\eta :=((T \owedge \certt{\theta}^\tau) \TR A) \leq (T \TR (A \ovee \certt{\theta}^\tau)) $.
Clearly, we have $\N \models \eta$ or $N \models \eta^\bot$.

    Suppose $\N \models \eta$. It follows that $\N \models \theta$ and $(T \owedge \certt{\theta}^\tau)\rhd A$. 
    It follows that 
    $W \vdash \certt{\theta}^\tau$, by Theorem~\ref{CSS1}. Since $T \rhd_{\sf loc} W$, we find $T \rhd \certt{\theta}^\tau$.
    Hence, $T \rhd (T\owedge \certt{\theta}^\tau) \rhd A$.

    Suppose $\N \models \eta^\bot$. It follows that $\mathbb N \models (T \TR (A \ovee \certt{\theta}^\tau)) \bles 
    ((T \owedge \certt{\theta}^\tau) \TR A)$.
    By Theorem~\ref{ijverigesmurf2}, $\certt{\theta}$ is inconsistent, so
     $\certt{\theta}^\tau$ is inconsistent.
It also follows from $\N \models\eta^\bot$ that $T \TR (A \ovee \certt{\theta}^\tau)$.
Hence, $T\rhd A$.
\end{proof}

The following theorem, which is in particular the case where $T$ and $A$ are same-signature-theories of $W$, can be proved in the same way.

\begin{thm}\label{rossersmurf2}
Let $W$ be a $\tau$-\gener\ theory, let $T$ be a c.e.~theory in the signature of $W$ such that $T \supseteq W$, and let $A$ be finitely axiomatized theory in the signature of $W$. 
Then, there is a $\Sigma_1$-sentence $\theta$, which is $\RR_0$-provably equivalent to a
witness comparison sentence, such that the following are equivalent:
\begin{enumerate}[a.]
%\item
%$\N \models \theta \vee \theta^\bot$,
\item
$((T + \certt{\theta}^\tau)\TR A)$ or $T \TR B$,
where $B = \{\phi \lor \certt{\theta}^\tau \mid \phi \in A\}$.
\item
$ T \TR A$.
\end{enumerate}
\end{thm}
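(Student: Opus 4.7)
The plan is to mimic the proof of Theorem~\ref{rossersmurf}, replacing the meet $T \owedge \certt{\theta}^\tau$ by the same-signature extension $T + \certt{\theta}^\tau$ and the join $A \ovee \certt{\theta}^\tau$ by the set $B$ of same-signature disjunctions. By Theorem~\ref{puntneussmurf}, I fix a $\Sigma_1$-sentence $\theta$ such that
\[
\RR_0 \vdash \theta \iff ((T + \certt{\theta}^\tau) \TR A) \bleq (T \TR B),
\]
where both arithmetical predicates for interpretability are well-behaved. The direction (b)$\Rightarrow$(a) is immediate: any interpretation of $A$ in $T$ is equally an interpretation of $A$ in the extension $T + \certt{\theta}^\tau$, yielding the first disjunct of (a).

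For (a)$\Rightarrow$(b), I set $\sigma := ((T + \certt{\theta}^\tau) \TR A)$ and $\sigma' := (T \TR B)$, let $\eta := \sigma \bleq \sigma'$, and observe that assumption (a) guarantees that at least one of $\sigma, \sigma'$ is true in $\N$, so exactly one of $\N \models \eta$ or $\N \models \eta^\bot$ holds. If $\N \models \eta$, then $\N \models \theta$, so Theorem~\ref{CSS1} delivers $W \vdash \certt{\theta}^\tau$. Since $T \supseteq W$ in the same signature, $T \vdash \certt{\theta}^\tau$, and hence $T$ and $T + \certt{\theta}^\tau$ are deductively equivalent; combined with the first disjunct of (a), which $\N \models \sigma$ provides, this yields $T \TR A$.

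In the remaining case $\N \models \eta^\bot$, I invoke Theorem~\ref{ijverigesmurf2}(b) (with the roles of $\sigma$ and $\sigma'$ exchanged, and $\rho := \theta$) to conclude that $\certt{\theta}$, and therefore also $\certt{\theta}^\tau$, is logically inconsistent. Since $\N \models \sigma'$, some interpretation $K$ witnesses $T \TR B$, so for each $\phi \in A$ we have $T \vdash \phi^K \lor (\certt{\theta}^\tau)^K$. Interpretations respect pure logical consequence, so from $\vdash \neg\, \certt{\theta}^\tau$ one obtains $T \vdash \neg\, (\certt{\theta}^\tau)^K$; hence $T \vdash \phi^K$, and $K$ is already an interpretation of $A$ in $T$. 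The main obstacle is this last step: one must verify carefully that logical inconsistency of the $\mathcal{L}_W$-sentence $\certt{\theta}^\tau$ really transfers through an arbitrary interpretation $K$ to give $T \vdash \neg\, (\certt{\theta}^\tau)^K$, which depends on $K$ correctly translating the identity axioms and relativizing the proof of $\neg\, \certt{\theta}^\tau$ to the domain of $K$---routine, but worth spelling out.
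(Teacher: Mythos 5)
Your proof is correct and is precisely the adaptation the paper has in mind: the paper states only that Theorem~\ref{rossersmurf2} ``can be proved in the same way'' as Theorem~\ref{rossersmurf}, and your argument carries out exactly that translation, replacing $\owedge$ by the same-signature union and $\ovee$ by the pointwise disjunction $B$, using $T \supseteq W$ in place of $T \rhd_{\sf loc} W$ in the $\N\models\eta$ case, and using the logical inconsistency of $\certt{\theta}^\tau$ (which follows via Theorem~\ref{observation} from the inconsistency of $\certt{\theta}$ supplied by Theorem~\ref{ijverigesmurf2}) to collapse $B$ to $A$ in the $\N\models\eta^\bot$ case. The final technical point you flag (that the interpretation $K$ maps the logically valid $\neg\,\certt{\theta}^\tau$ to something $T$-provable, given that $T$ proves the $K$-translated identity axioms and domain non-emptiness) is indeed the routine step one should spell out, and your treatment of it is sound.
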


\subsection{Applications of Certified Extension}

We turn to the consideration of various density results.

\begin{thm}\label{App4}
Consider c.e.~theories $T$ and $S$ such that $S \NTR_{\mathsf{loc}} T$.
Then, there exists a c.e.~theory $U$ such that $T \TR U$ and $S \NTR_{\mathsf{loc}} U$ and $U \NTR_{\mathsf{loc}} T$. 
Moreover, if $T \rhd S$, then $U \rhd S$ and, if $T \rhd_{\sf loc} S$, then $U \rhd_{\sf loc} S$.
\end{thm}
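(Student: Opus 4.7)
The plan is to set $U := T \ovee V$ for an appropriate c.e.~theory $V$, and to reduce the four required conditions on $U$ to conditions on $V$ using the infimum properties of $\ovee$ recalled in Section~\ref{bade}. Automatically, $T \rhd T \ovee V$, giving the first condition. The equivalence $(T \ovee V) \rhd X$ iff $T \rhd X$ and $V \rhd X$ yields the reductions: $U \NTR_{\sf loc} T$ reduces to $V \NTR_{\sf loc} T$, and the moreover clause reduces to $V \rhd S$ whenever $T \rhd S$ and $V \rhd_{\sf loc} S$ whenever $T \rhd_{\sf loc} S$.

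By hypothesis, fix a finite $T_0 \subseteq T$ with $S \NTR T_0$. For the condition $S \NTR_{\sf loc} U$, consider the finite subtheory $T_0 \ovee V_0$ of $T \ovee V$ for some finite $V_0 \subseteq V$. Since the propositional $P$ is $0$-ary, any interpretation of $T_0 \ovee V_0$ in $S$ must make $P$ either constantly true or constantly false, so $S \rhd T_0 \ovee V_0$ iff $S \rhd T_0$ or $S \rhd V_0$; because $S \NTR T_0$, it suffices to arrange $S \NTR_{\sf loc} V$.

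So the task reduces to producing a c.e.~$V$ with $V \NTR_{\sf loc} T$, $S \NTR_{\sf loc} V$, $V \rhd S$ whenever $T \rhd S$, and $V \rhd_{\sf loc} S$ whenever $T \rhd_{\sf loc} S$. A natural candidate is $V := T_0 \owedge S$, the supremum of $T_0$ and $S$ in the degrees of interpretability. This gives $V \rhd S$ and $V \rhd_{\sf loc} S$ for free, since $T_0 \owedge S \rhd S$ by the supremum property. It also gives $S \NTR_{\sf loc} V$: any interpretation of $T_0 \owedge S$ in $S$ would yield $S \rhd T_0$ by composition with $T_0 \owedge S \rhd T_0$, contradicting our choice of $T_0$.

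The main obstacle will be establishing $V \NTR_{\sf loc} T$ for $V := T_0 \owedge S$: this can fail precisely when $T$ is captured locally by combining the strengths of $T_0$ and $S$, so the naive supremum construction is not enough on its own. To handle this difficulty, I would either enlarge $T_0$ to push $V$ strictly below $T$ in the local interpretability order (available when $T$ is not locally finitely axiomatizable), or augment $V$ by taking $V := T_0 \owedge S \owedge \certt{\lambda}^\tau$ for a false $\Sigma_1$-sentence $\lambda$ obtained from Corollary~\ref{useful2} applied to a suitable $\RR_{0{\sf p}}$-sourced auxiliary theory in a combined signature, which explicitly forces $V \NTR_{\sf loc} T$ via a false $\Sigma_1$-witness. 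Verifying that all four desired properties of $V$ survive this refinement — in particular, that the added certified sentence does not collapse $S \NTR_{\sf loc} V$ or spoil the interpretation of $S$ inherited from the supremum — is the technical heart of the proof.
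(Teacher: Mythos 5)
Your overall shape—take $U := T \ovee V$ for a suitable $V$ built from a supremum with a certified sentence—is in the same spirit as the paper, which sets $U := (S \owedge \certt{\theta}^\tau) \ovee T$. But there are two substantive problems.

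First, your reduction of $S \NTR_{\sf loc} U$ to $S \NTR_{\sf loc} V$ rests on the claim that $S \rhd T_0 \ovee V_0$ iff $S \rhd T_0$ or $S \rhd V_0$, justified by the $0$-ary predicate $P$ being ``constantly true or constantly false.'' This is false. An interpretation translates $P$ to some $S$-sentence $\pi$; if $S$ neither proves nor refutes $\pi$, you get $S+\pi \rhd T_0$ and $S+\neg\pi \rhd V_0$, which does \emph{not} give $S\rhd T_0$ or $S\rhd V_0$. The recalled equivalences for $\ovee$ and $\owedge$ concern $(U\ovee V)\rhd W$ and $W\rhd(U\owedge V)$; the converse directions $W\rhd(U\ovee V)$ and $(U\owedge V)\rhd W$ do \emph{not} decompose as a disjunction, and your argument needs exactly the converse. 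This is why the paper does not try to reduce $S\NTR_{\sf loc} U$ to a condition on a single summand in isolation; instead it arranges a specific finite subtheory $A\ovee\certt{\theta}^\tau$ of $U$ with $S\NTR(A\ovee\certt{\theta}^\tau)$ directly.

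Second, the difficulty you correctly flag at the end—simultaneously getting $V\NTR_{\sf loc}T$ while preserving $S\NTR_{\sf loc}V$—is not a loose end to patch after the fact; it is where the whole proof lives, and the L\"ob-type Corollary~\ref{useful2} is not the right tool. The paper instead invokes the \emph{Rosser}-type Theorem~\ref{rossersmurf}, which delivers a single $\theta$ (by witness-comparison) for which $S\NTR A$ already yields \emph{both} $(S\owedge\certt{\theta}^\tau)\NTR A$ \emph{and} $S\NTR(A\ovee\certt{\theta}^\tau)$. That balance is precisely what cannot be obtained by first fixing the supremum and then separately throwing in a certified sentence from a L\"ob-style argument: the Rosser fixed point plays the two dangers (making $V$ too strong vs.\ too weak) against each other. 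Until you replace Corollary~\ref{useful2} by a Rosser-style double condition of that kind and repair the $\ovee$-decomposition step, the proposal does not close.
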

\begin{proof}
    Suppose  $S \NTR_{\mathsf{loc}} T$. 
    Then, we can find a finite subtheory $A$ of $T$, such that $S \NTR A$. 
    Since $S \rhd_{\sf loc} \RR_0$ and $\RR_0$ is a $\tau$-\gener\ theory, we can apply Theorem~\ref{rossersmurf} to $S$ and $A$. Let
    $\theta$ be the promised $\Sigma_1$-sentence.
    Since $S \NTR A$, we find $(S\owedge \certt{\theta}^\tau) \NTR A$ and $S \NTR (A \ovee \certt{\theta}^\tau)$.
    
Let $U : = (S \owedge \certt{\theta}^\tau) \ovee T$. It is immediate that $T \rhd U$. Moreover,
it is also immediate that, if $T\rhd S$, then $U\rhd S$ and, if $T\rhd_{\sf loc} S$, then $U \TR_{\sf loc} S$. 

Suppose $S \TR_{\sf loc} U$. Then, $S \TR (A \ovee \certt{\theta}^\tau)$. \emph{Quod non}.
Suppose $U \TR_{\sf loc} T$. Then, $ (S\owedge \certt{\theta}) \TR A$. \emph{Quod non}.
    \end{proof}

    \begin{prob}
        The proof of Theorem~\ref{App4}, seems to use specific properties of $\RR_0$. Is there a good way to generalize it?
    \end{prob}

We have immediately the following corollaries.

\begin{cor}\label{locdens}
Consider c.e.~theories $S$ and $T$. Suppose $T \lhdnneq_{\sf loc} S$. Then, there exists a c.e.~theory $U$ such that $T \lhdnneq_{\sf loc} U \lhdnneq_{\sf loc} S$. 
\end{cor}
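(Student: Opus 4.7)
The plan is to reduce Corollary~\ref{locdens} directly to Theorem~\ref{App4} by a role-swap of the two theories. First I would unpack the hypothesis $T \lhdnneq_{\sf loc} S$: it means $S \rhd_{\sf loc} T$ together with $T \nrhd_{\sf loc} S$. The goal is a c.e.~theory $U$ with $T \lhd_{\sf loc} U \lhd_{\sf loc} S$ and neither converse, i.e., $U \nrhd_{\sf loc} T$ and $S \nrhd_{\sf loc} U$.

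Next I would apply Theorem~\ref{App4} with its $T$ instantiated to our $S$ and its $S$ instantiated to our $T$. The hypothesis of that theorem, namely that the ``$S$-role'' does not locally interpret the ``$T$-role'', is precisely our assumption $T \nrhd_{\sf loc} S$. The theorem then supplies a c.e.~theory $U$ satisfying $S \rhd U$, $T \nrhd_{\sf loc} U$, and $U \nrhd_{\sf loc} S$. In particular $U \lhd S$, hence $U \lhd_{\sf loc} S$, while $S \nrhd_{\sf loc} U$ gives the strictness on the right.

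For the left-hand strict step I would invoke the ``moreover'' clause of Theorem~\ref{App4}: under the matching just described, its hypothesis ``$T \rhd_{\sf loc} S$'' becomes our $S \rhd_{\sf loc} T$, which holds by assumption. The clause then yields $U \rhd_{\sf loc} T$, i.e., $T \lhd_{\sf loc} U$. Combined with $T \nrhd_{\sf loc} U$ from the previous paragraph, this is $T \lhdnneq_{\sf loc} U$, completing the chain $T \lhdnneq_{\sf loc} U \lhdnneq_{\sf loc} S$.

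There is no real obstacle to overcome: the corollary is a direct bookkeeping consequence of Theorem~\ref{App4}, with the only point of attention being to match up the two theories in the right order and to check that the ``moreover'' clause is stated for $\rhd_{\sf loc}$ (as it is), so that $S \rhd_{\sf loc} T$ transfers to $U \rhd_{\sf loc} T$ rather than merely to the global version $U \rhd T$.
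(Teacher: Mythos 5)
Your proposal is correct and matches the paper's intent. The paper simply declares Corollary~\ref{locdens} to follow ``immediately'' from Theorem~\ref{App4}; the content of that immediacy is exactly what you spell out: instantiate the theorem with the roles of $S$ and $T$ swapped, so that the hypothesis $S \NTR_{\sf loc} T$ becomes our $T \NTR_{\sf loc} S$, read off $S \rhd U$ (hence $S \rhd_{\sf loc} U$) and $U \NTR_{\sf loc} S$ for the right-hand strict inequality, and use the ``moreover'' clause (with $S \rhd_{\sf loc} T$ from the hypothesis) together with $T \NTR_{\sf loc} U$ for the left-hand one.
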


The density of the degrees of local interpretability of c.e.~theories was first proved by Jan Mycielski, Pavel Pudl\'ak and Alan Stern in their
classical paper~\cite[Corollary 6.17]{myci:latt90}.

\begin{cor}\label{density1}
Consider c.e.~theories $S$ and $T$, where either $S$ is a globalizer or
$T$ is finitely axiomatized. Suppose $T \lhdnneq S$. Then, there exists a c.e.~theory $U$ such that $T \lhdnneq U \lhdnneq S$. 
\end{cor}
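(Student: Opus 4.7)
The plan is to apply Theorem~\ref{App4} directly, with its variables assigned as $T_{\rm thm} := S$ and $S_{\rm thm} := T$. With this role assignment, Theorem~\ref{App4} produces a $U$ that already has the \emph{global} upper-bound property $T_{\rm thm} \rhd U$ (i.e.\ $U \lhd S$), and its ``moreover'' clause gives the \emph{global} lower-bound property as well, provided we can verify the hypothesis $S_{\rm thm} \NTR_{\sf loc} T_{\rm thm}$, i.e.\ $T \NTR_{\sf loc} S$. From our assumption $T \lhdnneq S$ we have $S \rhd T$ and $T \NTR S$; the whole issue is therefore to upgrade $T \NTR S$ to $T \NTR_{\sf loc} S$, and this is precisely where the two case hypotheses of the corollary are used.

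In Case A, where $T$ is finitely axiomatized, we appeal to the standard Pudl\'ak-type fact that for a finitely axiomatized theory $T$ (of sufficient strength, which is implicit in the context of theories interpreting $\RR_0$), local and global interpretability into $T$ coincide, $T \rhd V \Leftrightarrow T \rhd_{\sf loc} V$ for every c.e.\ $V$, so $T \NTR S$ yields $T \NTR_{\sf loc} S$. In Case B, where $S$ is a globalizer, we use that the standard globalizers are sequential, and the corresponding classical equivalence $T \rhd_{\sf loc} S \Leftrightarrow T \rhd S$ for sequential interpretees provides the same upgrade. Once $T \NTR_{\sf loc} S$ is in hand, Theorem~\ref{App4} delivers a c.e.\ $U = (T \owedge \certt{\theta}^\tau) \ovee S$ satisfying $S \rhd U$ (so $U \lhd S$), $T \NTR_{\sf loc} U$ (so $U \not\lhd T$), and $U \NTR_{\sf loc} S$ (so $S \not\lhd U$). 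The ``moreover'' clause, invoked with $S \rhd T$ (our hypothesis $T \lhd S$), then yields $U \rhd T$, i.e., $T \lhd U$ globally. Putting these together, $T \lhdnneq U \lhdnneq S$.

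The main obstacle is exactly the hypothesis upgrade $T \NTR S \Rightarrow T \NTR_{\sf loc} S$: this is nontrivial in general and is what forces the case split. In both cases it rests on standard results about interpretations of sequential or finitely axiomatized theories that are background for this line of work but are not re-proved in the paper; once that upgrade is accepted, the rest of the argument is a straightforward packaging of Theorem~\ref{App4} together with its ``moreover'' clause.
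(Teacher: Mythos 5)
Your overall strategy is the right one: apply Theorem~\ref{App4} with the roles of $T$ and $S$ swapped, and the only nontrivial step is to upgrade $T \NTR S$ to $T \NTR_{\sf loc} S$, i.e.\ to show $T \rhd_{\sf loc} S \Rightarrow T \rhd S$. But both of the arguments you offer for that upgrade fail. In Case A, you invoke ``local~$=$~global interpretability into a finitely axiomatized $T$''; this is true for finitely axiomatized \emph{sequential} theories (by a Pudl\'{a}k-style cut argument), but is not a consequence of $T$ merely being finite, and the corollary's hypothesis does not posit sequentiality or even that $T$ interprets $\RR_0$ globally. In Case B, the claimed ``classical equivalence $T \rhd_{\sf loc} S \Leftrightarrow T \rhd S$ for sequential interpretees $S$'' is simply false: the globalizer notion is a property of a theory \emph{as interpreter}, not as interpretee, and the Orey phenomenon ($T \rhd_{\sf loc} S$ with $T \nrhd S$) occurs precisely for sequential reflexive $S$ such as $\mathsf{PA}$. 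Nor is every globalizer sequential. So neither case gets you $T \NTR_{\sf loc} S$.

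The deeper problem is that the hypothesis you (and the paper) are working from has $S$ and $T$ in the wrong places. The upgrade $T \rhd_{\sf loc} S \Rightarrow T \rhd S$ is immediate if \emph{$T$ is a globalizer} (by definition) or if \emph{$S$ is finitely axiomatized} (then $S$ is its own finite subtheory), not the other way round. The paper's stated version is in fact contradicted by its own Example~\ref{lazysmurf}: there $T = {\sf TWO}$ is finitely axiomatized, $T \lhdnneq S = {\sf INF}$, yet no theory is strictly $\lhd$-between them. The one-line proof in the paper, which observes that ``$S \rhd_{\sf loc} T$ iff $S \rhd T$,'' points in the same direction: that observation is vacuous here (we already have $S \rhd T$ from $T \lhdnneq S$), and the observation actually needed is ``$T \rhd_{\sf loc} S$ iff $T \rhd S$,'' which is precisely what the corrected hypothesis ``$T$ is a globalizer or $S$ is finitely axiomatized'' supplies. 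With that correction, your packaging of Theorem~\ref{App4} and its moreover-clause is exactly right and the corollary goes through; as matters stand, the statement you are trying to prove cannot be established by this route, and your attempted upgrades rest on facts that are either not available or not true.
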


\begin{proof}
We note that if either $S$ is a globalizer or $T$ is finitely axiomatized, then $S\rhd_{\sf loc} T$ iff $S \rhd T$.
\end{proof}

\begin{ex}\label{lazysmurf}
    Consider {\sf INF} the theory in the signature of identity with axioms saying, for each $n$, that there are at least $n$ elements  and
    {\sf TWO} the theory in the signature of identity with an axiom saying that there are precisely two elements. 
    Then, we have ${\sf TWO} \lhdnneq {\sf INF}$. Every theory that has a finite model is interpretable in {\sf TWO} and every theory that
    has only infinite models proves {\sf INF}. So, there can be no theory strictly $\lhd$-between {\sf TWO} and {\sf INF}. \emph{Ergo}, 
    density fails in general in the degrees of interpretability of c.e.~theories. 
\end{ex}

\begin{prob}
Example~\ref{lazysmurf} seems too easy. What if we do have density for all theories with no finite models?
So, it would be good to have some further classes of examples.
\end{prob}

In one special case, we can constrain the in-between theories a bit more. The following theorem is
a generalization of \cite[Theorem 2]{Vis17}.

\begin{thm}\label{App3}
Let $W$ be a $\tau$-\gener\ c.e.~theory. 
Consider c.e.~theories $S$ and $T$ such that $W \subseteq S \subseteq T$ and $S \NTR_{\sf loc} T$. 
Then, there exists a c.e.~theory $U$ such that $S \subseteq U \subseteq T$ and $S \NTR_{\sf loc} U$ and $U \NTR_{\sf loc} T$.
\end{thm}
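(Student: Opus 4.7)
The plan is to imitate the Rosser-style fixed-point argument underlying Theorem~\ref{rossersmurf2}, but with a witness-comparison tailored so that the interpolating theory $U$ lives automatically in the signature of $W$ and is sandwiched between $S$ and $T$ under $\subseteq$. First, I would pick, using $S \NTR_{\sf loc} T$, a finite subtheory $A$ of $T$ with $S \NTR A$. Then, by Theorem~\ref{puntneussmurf}(i), I would obtain a $\Sigma_1$-sentence $\theta$ satisfying
\[
 \RR_0 \vdash \theta \iff \bigl(((S + B_\theta) \TR A) \bleq (S \TR B_\theta)\bigr),
\]
where $B_\theta := \{\phi \vee \certt{\theta}^\tau \mid \phi \in A\}$ is a finite set uniformly computable from $\gn{\theta}$. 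Since $S$ is c.e.~and $B_\theta$ is finite (uniformly in $\gn\theta$), both interpretability statements are $\Sigma_1$ in $\gn\theta$, so I can work with their standard 1-$\Sigma_1$-normalisations and feed them into witness comparison. Setting $U := S + B_\theta$, the inclusion $S \subseteq U$ is immediate, while $T \vdash A$ forces $T \vdash B_\theta$, hence $U \subseteq T$. What remains is to prove $S \NTR B_\theta$ and $(S + B_\theta) \NTR A$; these furnish $S \NTR_{\sf loc} U$ (witnessed by the finite subtheory $B_\theta$ of $U$) and $U \NTR_{\sf loc} T$ (witnessed by $A \subseteq T$) respectively.

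I would proceed by excluding both sides of the fixed-point comparison. If $\N \models \theta$, then $\theta$ entails $(S + B_\theta) \TR A$; but Theorem~\ref{CSS1}(2) forces $W \vdash \certt{\theta}^\tau$, so $S \vdash B_\theta$, so $S + B_\theta \equiv S$, whence $S \TR A$, contradicting $S \NTR A$. If $\N \models \theta^\bot$, then Theorem~\ref{ijverigesmurf2}(b) yields that $\certt{\theta}$ is inconsistent, hence $\certt{\theta}^\tau \vdash \bot$; every $\phi \vee \certt{\theta}^\tau$ then collapses to $\phi$, so $B_\theta$ is logically equivalent to $A$, and $\theta^\bot$ entails $S \TR B_\theta$, i.e., $S \TR A$, again contradicting $S \NTR A$.

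With neither $\theta$ nor $\theta^\bot$ true in $\N$, a short check on the four sign-patterns of $\sigma_1 := ((S + B_\theta) \TR A)$ and $\sigma_2 := (S \TR B_\theta)$ forces both to be false: if exactly one of them had a witness, the corresponding side of the witness comparison would hold vacuously, reviving $\theta$ or $\theta^\bot$. Hence $U \NTR A$ and $S \NTR B_\theta$, which is what we need. I expect the main obstacle to be the self-referential bookkeeping: $B_\theta$ must be arranged so that the $\Sigma_1$-formalisations of $(S + B_\theta) \TR A$ and $S \TR B_\theta$ depend uniformly on $\gn\theta$ in a way that still lets Theorems~\ref{CSS1} and~\ref{ijverigesmurf2} fire on $\certt{\theta}^\tau$; once this is in place, the rest of the argument is a clean adaptation of the proofs of Theorems~\ref{rossersmurf} and~\ref{rossersmurf2}.
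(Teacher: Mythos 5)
Your proof is correct and uses essentially the same Rosser-style argument as the paper, which simply invokes Theorem~\ref{rossersmurf2} for $W$, $S$, $A$ and then takes $U := \{\phi \lor \psi \mid \phi \in T,\ \psi \in S + \certt{\theta}^\tau\}$. Your inline re-derivation with $(S + B_\theta)$ on both sides of the witness comparison (instead of $(S + \certt{\theta}^\tau)$ on the left) is a harmless local variant that lets you take the slightly simpler $U := S + B_\theta$; the underlying chain of lemmas (Theorem~\ref{puntneussmurf}, Theorem~\ref{CSS1}, Theorem~\ref{ijverigesmurf2}) and the case analysis on $\theta$, $\theta^\bot$, and the double-false case are the same.
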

\begin{proof}
Suppose $W$ is $\tau$-\gener, $W \subseteq S \subseteq T$ and $S \NTR_{\sf loc} T$.
Then, we can find a finite subtheory $A$ of $T$, such that $S \NTR A$. 
We apply Theorem~\ref{rossersmurf2} to $W$, $S \supseteq W$, and $A$. 
Let $\theta$ be the promised $\Sigma_1$-sentence.
Since $S \NTR A$, we find $(S + \certt{\theta}^\tau) \NTR A$ and $S \NTR B$, where $B = \{\phi \lor \certt{\theta}^\tau \mid \phi \in A\}$.
    
Let $U : = \{\phi \lor \psi \mid \phi \in  T$ and $\psi \in (S + \certt{\theta}^\tau)\}$. It is immediate that $S \subseteq U \subseteq T$. 

Since $B$ is a finite subtheory of $U$ and $S \NTR B$, we have that $S \NTR_{\sf loc} U$. 
Also since $U \subseteq (S + \certt{\theta}^\tau)$ and $(S + \certt{\theta}^\tau) \NTR A$, we have $U \NTR_{\sf loc} T$. 
\end{proof}

\begin{cor}\label{density2}
Suppose that $W$ is a $\tau$-\gener\ c.e.~theory. 
Consider c.e.~theories $S$ and $T$ such that $W \subseteq S \subseteq T$, where either $S$ is a globalizer or
$T$ is finitely axiomatized. Suppose $S \NTR T$. Then, there exists a c.e.~theory $U$ such that $S \subseteq U \subseteq T$ and $S \NTR U$ and $U \NTR T$.
\end{cor}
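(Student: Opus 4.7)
The plan is to reduce Corollary~\ref{density2} to Theorem~\ref{App3} by showing that under the stated hypotheses, local non-interpretability and global non-interpretability agree in the relevant places, so the non-interpretability output of Theorem~\ref{App3} upgrades from local to global.

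First I would verify the elementary fact that $X \TR Y$ always implies $X \TR_{\sf loc} Y$, so contrapositively $X \NTR_{\sf loc} Y$ implies $X \NTR Y$. This means that once Theorem~\ref{App3} hands us a $U$ with $S \subseteq U \subseteq T$, $S \NTR_{\sf loc} U$, and $U \NTR_{\sf loc} T$, we immediately get the weaker $S \NTR U$ and $U \NTR T$. So the only real task is to pass from the hypothesis $S \NTR T$ to the hypothesis $S \NTR_{\sf loc} T$ that Theorem~\ref{App3} actually needs.

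For this passage, I would split into the two cases of the corollary. If $T$ is finitely axiomatized, then $T$ is (deductively equivalent to) its own maximal finite subtheory, so $S \TR_{\sf loc} T$ already implies $S \TR T$; contrapositively, $S \NTR T$ gives $S \NTR_{\sf loc} T$. If instead $S$ is a globalizer, the definition of globalizer is precisely that $S \TR_{\sf loc} W' \Rightarrow S \TR W'$ for every c.e.~$W'$, in particular for $W' = T$, so again $S \NTR T$ yields $S \NTR_{\sf loc} T$.

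With $S \NTR_{\sf loc} T$ in hand, Theorem~\ref{App3} (applied to $W$, $S$, $T$) produces a c.e.~theory $U$ with $S \subseteq U \subseteq T$, $S \NTR_{\sf loc} U$, and $U \NTR_{\sf loc} T$. The first observation of this sketch then gives $S \NTR U$ and $U \NTR T$, completing the proof. I do not expect any serious obstacle here: the content is entirely in Theorem~\ref{App3}, and this corollary is just the routine translation from the local to the global non-interpretability regime, enabled by either the finite-axiomatizability of $T$ or the globalizer property of $S$.
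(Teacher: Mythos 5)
Your proof is correct and follows exactly the route the paper intends: the paper states this corollary without proof, but its explicit proof of Corollary~\ref{density1} rests on the identical observation (that under either hypothesis, $S \rhd_{\sf loc} T$ iff $S \rhd T$), and the passage from the local non-interpretability conclusions of Theorem~\ref{App3} to the global ones is the same routine contrapositive you describe. Nothing to add.
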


\begin{thm}\label{SFF}
Any finite theory is the supremum of the finite theories strictly below it in the lattice of the interpretability degrees of c.e.~theories. 
\end{thm}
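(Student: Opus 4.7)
The plan is to show, for a finite theory $A$, that $A$ is an upper bound of the finite theories strictly below it (trivial, since any such $B$ satisfies $A \rhd B$ by definition) and then to establish the least upper bound property. The latter amounts, contrapositively, to the following claim: whenever $C$ is a c.e.~theory with $C \NTR A$, there is a finite theory $B$ with $B \lhdnneq A$ and $C \NTR B$. If I can produce such a $B$, then $C$ fails to dominate all finite theories strictly below $A$, so $A$ is the supremum.

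To produce $B$, I would invoke Theorem~\ref{App4} with $S := C$ and $T := A$. Note first that, because $A$ is finitely axiomatized, $C \NTR A$ is equivalent to $C \NTR_{\sf loc} A$, so the hypothesis of Theorem~\ref{App4} is met. The theorem then hands me a c.e.~theory $U$ with $A \TR U$, $C \NTR_{\sf loc} U$, and $U \NTR_{\sf loc} A$. Since $A$ is finite, $U \NTR_{\sf loc} A$ is the same as $U \NTR A$, so in fact $U \lhdnneq A$. From $C \NTR_{\sf loc} U$ I then extract a finitely axiomatized subtheory $B \subseteq U$ with $C \NTR B$.

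It remains to verify $B \lhdnneq A$. The interpretation $A \TR U$ restricts to an interpretation $A \TR B$, since $B$ is a subtheory of $U$ in the same signature; so $A \TR B$. Conversely, if we had $B \TR A$, then, composing with $U \supseteq B$, we would obtain $U \TR A$, contradicting $U \NTR A$. Hence $B \lhdnneq A$, and we are done.

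The main obstacle I anticipate is precisely the verification that the finite $B$ we extract lies \emph{strictly} below $A$; this is where Theorem~\ref{App4} is essential, because a naive choice (such as a finite subtheory of $C \owedge A$) would not guarantee $A \not\TR B$. The Rosser-style fixed point machinery packed into Theorem~\ref{App4} (via Theorem~\ref{rossersmurf}, which in turn draws on the \certh\ for $\RR_0$) is what produces the intermediate $U$ witnessing strict incomparability, and from it the desired finite $B$ can be read off.
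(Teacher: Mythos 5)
Your proof is correct, and it takes a genuinely different route from the paper's. The paper proves Theorem~\ref{SFF} directly by a double Rosser-style fixed point (Theorem~\ref{puntneussmurf}(ii)), producing two $\Sigma_1$-sentences $\rho$ and $\theta$ and arguing, via three claims, that at least one of the explicit finite theories $A \ovee [\rho]$, $A \ovee [\theta]$ is strictly below $A$ and that interpretability of either in $U$ already forces $U \rhd A$. You instead reduce everything to Theorem~\ref{App4} (whose proof rests on a \emph{single} Rosser fixed point via Theorem~\ref{rossersmurf}): from $C \NTR A$ you get a c.e.~$U$ with $A \rhd U$, $C \NTR_{\sf loc} U$, $U \NTR A$, and then extract a finite $B \subseteq U$ with $C \NTR B$; your verification that $A \rhd B$ (restricting $A \rhd U$ to the subtheory) and $B \NTR A$ (else $U \rhd B \rhd A$) is sound, and the equivalences $\NTR$ vs.\ $\NTR_{\sf loc}$ you invoke are justified since $A$ is finite. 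Your route also handles the minimal-$A$ case silently and vacuously, where the paper must set it aside explicitly because its Claims presuppose non-minimality.

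What each approach buys: the paper's direct construction is self-contained and yields explicit witnesses of a very concrete shape (disjoint sums of $A$ with certified $\Sigma_1$-sentences), which is informative about which finite theories suffice as the approximating family. Your proof is more modular and shorter, reusing Theorem~\ref{App4} as a black box and thereby needing only one fixed point rather than two; the price is that the finite theory $B$ you obtain is less explicit (an unspecified finite subtheory of the $U$ produced by Theorem~\ref{App4}). Both proofs are valid.
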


We will give two proofs. The first uses an ordinary G\"odel numbering and the second a self-referential one.
We remind the reader that we chose the representation of $\rhd$ in such a way that, as long as the interpreted theory is finite
and the interpreting one c.e., it is well-behaved and 1-$\Sigma_1$.

\begin{proof}[Proof with ordinary G\"odel numbering.]
Suppose $A$ is a finitely axiomatized theory. If $A$ is in the minimal
degree, we are immediately done.  Suppose $A$ is non-minimal. 
Suppose $U$ interprets all finitely axiomatized theories strictly below $A$.
We have to show that $U \rhd A$.

By  Theorem~\ref{puntneussmurf}(ii), we can find $\Sigma_1$-sentences $\rho$ and $\theta$ such that:
\begin{itemize}
\item 
$\RR_0\vdash \rho \iff ((U\rhd (A\ovee [\theta])) \vee (([\rho]\ovee [\theta])\rhd A)) \bleq 
(U\rhd (A\ovee [\rho]))$.
\item 
$\RR_0 \vdash \theta \iff (U\rhd (A\ovee [\rho])) <((U\rhd (A\ovee [\theta])) \vee (([\rho]\ovee [\theta])\rhd A))$. 
\end{itemize}
Let
$\eta := (U\rhd (A\ovee [\theta])) \vee (([\rho]\ovee [\theta])\rhd A)) \bleq 
(U\rhd (A\ovee [\rho]))$.

\medskip\noindent
\emph{Claim 1:}
Suppose $U\rhd (A \ovee [\theta])$. Then, $U \rhd A$.

\medskip\noindent 
\emph{Proof of Claim 1}
Suppose $U\rhd (A \ovee [\theta])$. Then, $\N \models \eta$ or $\N \models\eta^\bot$. 
In the first case, we have, by Theorem~\ref{ijverigesmurf}, that $[\theta]\vdash \bot$. Hence, 
$U\rhd A$.

In the second case, we have $U\rhd (A\ovee [\rho])$. By Theorem~\ref{ijverigesmurf}, we find $[\rho]\vdash \bot$. 
Hence, again, $U\rhd A$. 

So, in both cases, we may conclude $U\rhd A$.

\medskip\noindent \emph{Claim 2:} 
Suppose $U\rhd (A \ovee[\rho])$. Then, $U \rhd A$.

\medskip\noindent 
\emph{Proof of Claim 2.}
Suppose $U\rhd (A \ovee[\rho])$.
 It follows that
$\N \models \eta$ or $\N \models \eta^\bot$. In the first case it follows that 
(a) $U\rhd (A\ovee [\theta])$ or  (b) $([\rho]\ovee [\theta])\rhd A$.
In subcase (a), we find, by Claim 1, that $U \rhd A$.
In subcase (b), we have $\N \models \rho$, and, hence, by Theorem~\ref{CSS}, that  $\RR_0 \vdash [\rho]$.
So, $[\rho]$ has a finite model, and, thus,
$A$ is in the minimal degree, contradicting our assumption on $A$. 
% So in both cases $U \rhd A$.

Suppose $\N \models \eta^\bot$. In that case, we have $U \rhd (A\ovee [\rho])$. So, by Theorem~\ref{ijverigesmurf}, we find $U \rhd A$. 

\medskip\noindent
\emph{Claim 3:}
We have either $(A \ovee [\rho]) \nrhd A$ or $(A \ovee [\theta]) \nrhd A$.

\bigskip\noindent
\emph{Proof of Claim 3.}
Suppose we have both $(A \ovee[\rho]) \rhd A$ and $(A\ovee [\theta]) \rhd A$. 
Then,
$([\rho]\ovee [\theta]) \rhd A$. It follows that $\N \models \eta$ or $\N \models\eta^\bot$, and, hence, that
$\N \models \rho$ or $\N \models\theta$. 
In both cases we may conclude that $([\rho]\ovee [\theta])$ has a finite model, so
$A$ is in the minimal degree, contradicting our assumption on $A$. 

\medskip We are now ready to prove the theorem. By Claim 3,
 one of $A \ovee[\rho]$ and $A\ovee [\theta]$ is strictly below $A$, and,
hence, below $U$. If $U \rhd (A\ovee [\theta])$, it  follows by Claim 1 that $U\rhd A$.
If $U \rhd (A\ovee [\rho]) $, it  follows by Claim 2 that $U\rhd A$.
So, $A$ is indeed the supremum of the finite elements
strictly below it. 
\end{proof}

We now give our proof using a self-referential G\"odel numbering. The
proof will be largely the same, only we need just one fixed point.
We note that e.g. the arithmetized form of $U \rhd A$ in this proof is the form appropriate for the  
self-referential G\"odel numbering and, thus, is different from the case
of the ordinary numbering. We opted to keep the same notations for readability's sake,
but the reader should keep the point in mind.

\begin{proof}[Proof with self-referential G\"odel numbering.]
Suppose $A$ is a finitely axiomatized theory. If $A$ is in the minimal
degree, we are immediately done.  Suppose $A$ is non-minimal. 
Suppose $U$ interprets all finitely axiomatized theories strictly below $A$.
We have to show that $U \rhd A$.
We find $\rho$ with:
\[  \rho = ((U\rhd (A\ovee [\rho^{\bot}])) \vee (([\rho]\ovee [\rho^{\bot}])\rhd A)) \bleq 
(U\rhd (A\ovee [\rho)).\]

\medskip\noindent
\emph{Claim 1:}
Suppose $U\rhd (A \ovee [\rho^{\bot}])$. Then, $U \rhd A$.

\medskip\noindent 
\emph{Proof of Claim 1}
Suppose $U\rhd (A \ovee [\rho^{\bot}])$. Then, $\N \models \rho$ or $\N \models\rho^\bot$. 
In the first case, we have, by Theorem~\ref{ijverigesmurf}, that $[\rho^{\bot}]\vdash \bot$. Hence, 
$U\rhd A$.

In the second case, we have $U\rhd (A\ovee [\rho])$. By Theorem~\ref{ijverigesmurf}, we find $[\rho]\vdash \bot$. 
Hence, again, $U\rhd A$. 

So, in both cases, we may conclude $U\rhd A$.

\medskip\noindent \emph{Claim 2:} 
Suppose $U\rhd (A \ovee[\rho])$. Then, $U \rhd A$.

\medskip\noindent 
\emph{Proof of Claim 2.}
Suppose $U\rhd (A \ovee[\rho])$.
 It follows that
$\N \models \rho$ or $\N \models \rho^\bot$. In the first case it follows that 
(a) $U\rhd (A\ovee [\rho^{\bot}])$ or  (b) $([\rho]\ovee [\rho^{\bot}])\rhd A$.
In subcase (a), we find, by Claim 1, that $U \rhd A$.
In subcase (b), we have $\N \models \rho$, and, hence, by Theorem~\ref{CSS}, that  $\RR_0 \vdash [\rho]$.
So, $[\rho]$ has a finite model, and, thus,
$A$ is in the minimal degree, contradicting our assumption on $A$. 
% So in both cases $U \rhd A$.

Suppose $\N \models \rho^\bot$. In that case, we have $U \rhd (A\ovee [\rho])$. So, by Theorem~\ref{ijverigesmurf}, we find $U \rhd A$. 

\medskip\noindent
\emph{Claim 3:}
We have either $(A \ovee [\rho]) \nrhd A$ or $(A \ovee [\rho^{\bot}]) \nrhd A$.

\bigskip\noindent
\emph{Proof of Claim 3.}
Suppose we have both $(A \ovee[\rho]) \rhd A$ and $(A\ovee [\rho^{\bot}]) \rhd A$. 
Then,
$([\rho]\ovee [\rho^{\bot}]) \rhd A$. It follows that $\N \models \rho$ or $\N \models\rho^\bot$. 
In both cases we may conclude that $([\rho]\ovee [\rho^{\bot}])$ has a finite model, so
$A$ is in the minimal degree, contradicting our assumption on $A$. 

\medskip We are now ready to prove the theorem. By Claim 3,
 one of $A \ovee[\rho]$ and $A\ovee [\rho^{\bot}]$ is strictly below $A$, and,
hence, below $U$. If $U \rhd (A\ovee [\rho^{\bot}])$, it  follows by Claim 1 that $U\rhd A$.
If $U \rhd (A\ovee [\rho]) $, it  follows by Claim 2 that $U\rhd A$.
So, $A$ is indeed the supremum of the finite elements
strictly below it. 
\end{proof}

\begin{ex}
    We note that both in the local and in the global degrees of interpretability of c.e.~theories, the degree of the theory
    {\sf INF} is not the supremum of the degrees  of the theories strictly below them, so \emph{a fortiori}, it is not the
    supremum of the degrees of the finitely axiomatizable ones.
\end{ex}

\begin{cor}
In the lattice of c.e.~degrees of interpretability, no theory $A$ can be
finitely axiomatized, non-minimal, join-irreducible, and compact. 
\end{cor}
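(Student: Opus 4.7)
The plan is to apply Theorem~\ref{SFF} and then reduce to a contradiction via compactness and join-irreducibility; once Theorem~\ref{SFF} is in hand the whole argument is essentially a lattice-theoretic triviality.

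Concretely, suppose for contradiction that $A$ is finitely axiomatized, non-minimal, join-irreducible, and compact in the lattice of c.e.~degrees of interpretability. By Theorem~\ref{SFF}, $A$ is the supremum in this lattice of the family $\mathcal{S} := \{B \mid B$ is a finitely axiomatized c.e.~theory with $B \lhdnneq A\}$; in particular, $A$ lies at or below the supremum of $\mathcal{S}$. By compactness, there exist $B_1,\ldots,B_k \in \mathcal{S}$ with $A$ at or below $B_1 \owedge \cdots \owedge B_k$. Conversely, since each $B_i \lhdnneq A$, the join $B_1 \owedge \cdots \owedge B_k$ lies at or below $A$; hence $A$ is lattice-equivalent to $B_1 \owedge \cdots \owedge B_k$. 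An easy induction on $k$ using join-irreducibility now yields $A \equiv B_i$ for some $i \leq k$, contradicting $B_i \lhdnneq A$.

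I do not anticipate any substantive obstacle. The non-minimality hypothesis is used implicitly to guarantee that the family $\mathcal{S}$ represents $A$ as a nontrivial supremum, so that the $B_i$ supplied by compactness are genuine theories strictly below $A$ rather than nothing (as would occur if $A$ were the bottom of the lattice, in which case the statement would be vacuous). The only points worth double-checking are that compactness has its usual meaning in this lattice---namely, that a compact element below an arbitrary supremum is below the supremum of a finite sub-family---and that Theorem~\ref{SFF} really supplies the least-upper-bound property and not merely an upper-bound statement; both are clear from the proof of Theorem~\ref{SFF}.
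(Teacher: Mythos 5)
Your proof is correct and follows essentially the same route as the paper's: invoke Theorem~\ref{SFF}, apply compactness to replace the infinite supremum by a finite one, and conclude by join-irreducibility. Your write-up is a little more expansive (you unwind the join-irreducibility step by induction and comment explicitly on the role of non-minimality), but the argument is the same.
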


\begin{proof}
Suppose $A$ is finitely axiomatized, non-minimal, join-irreducible, and compact.
By Theorem~\ref{SFF}, $A$ is the supremum of the finitely axiomatized theories
strictly below it. Hence, by compactness, it is 
 mutually interpretable with the supremum of a finite number of finite theories  strictly below it.
By join-irreducibility, it follows that $A$ is mutually interpretable with
a finite theory strictly below it. A contradiction.
\end{proof}

%\subsection{An Application of Generalized Certified Extension}

\begin{thm}\label{smurferella}
Consider a c.e.~theory $W$.
\begin{enumerate}[i.]
\item
Suppose $W$ is mutually locally interpretable with a  \gener\ theory. Then,
in the degrees
of local interpretability of c.e.~theories,
$W$ is the infimum of the finitely axiomatized theories above it.
\item
Suppose $W$ is mutually interpretable with a  \gener\ globalizer. Then,
in the degrees
of interpretability of c.e.~theories,
$W$ is the infimum of the finitely axiomatized theories above it.
\end{enumerate}
\end{thm}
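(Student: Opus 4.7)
The plan is to prove both parts via the contrapositive: given a c.e.\ theory $V$ with $W \nrhd_{\sf loc} V$ in part~(i) (respectively $W \nrhd V$ in part~(ii)), I will exhibit a finitely axiomatized theory $A$ satisfying $A \rhd_{\sf loc} W$ and $A \nrhd_{\sf loc} V$ (respectively $A \rhd W$ and $A \nrhd V$). Since $W$ is trivially a lower bound of the set of finitely axiomatized theories above it, producing such an $A$ shows that $V$ fails to be a lower bound, and hence that $W$ is the greatest lower bound, i.e.\ the infimum.

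For part~(i), I would fix a $\tau$-\gener\ c.e.\ theory $W'$ mutually locally interpretable with $W$, and assume $W \nrhd_{\sf loc} V$. The first step is to transfer this non-interpretability to $W' \nrhd_{\sf loc} V$: if instead $W' \rhd_{\sf loc} V$, then for every finite $V_0 \subseteq V$ we have $W' \rhd V_0$, and composing with $W \rhd_{\sf loc} W'$ gives $W \rhd_{\sf loc} V_0$; since $V_0$ is finite this collapses to $W \rhd V_0$, contradicting $W \nrhd_{\sf loc} V$. Now Corollary~\ref{useful2}(i) applied to $W'$ and $V$ produces a false $\Sigma_1$-sentence $\lambda$ with $\certt{\lambda}^\tau \nrhd_{\sf loc} V$. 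By Theorem~\ref{CSS1}(3), $\certt{\lambda}^\tau \vdash W'$, so trivially $\certt{\lambda}^\tau \rhd W'$; composing with $W' \rhd_{\sf loc} W$ yields $\certt{\lambda}^\tau \rhd_{\sf loc} W$. The finite theory axiomatized by the single sentence $\certt{\lambda}^\tau$ is then the required $A$.

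Part~(ii) is entirely parallel: replace mutual local interpretability with mutual interpretability with a \gener\ globalizer $W'$, and use Corollary~\ref{useful2}(ii) in place of Corollary~\ref{useful2}(i). The globalizer hypothesis on $W'$ is precisely what upgrades the conclusion $\certt{\lambda}^\tau \nrhd_{\sf loc} V$ to $\certt{\lambda}^\tau \nrhd V$, and the transfer step $W \nrhd V \Longrightarrow W' \nrhd V$ is immediate from mutual interpretability and the transitivity of $\rhd$.

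The main obstacle is really just careful bookkeeping: one has to verify that mutual (local) interpretability lets one import and export both positive and negative (local) interpretability facts between $W$ and the \gener\ theory $W'$, using the elementary fact that $\rhd_{\sf loc}$ composed with $\rhd$ of a finitely axiomatized target collapses to $\rhd$. The substantive certificate-based content is already isolated in Corollary~\ref{useful2} and Theorem~\ref{CSS1}, so beyond this transfer the argument reduces to a single application of each.
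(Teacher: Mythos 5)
Your proof is correct and takes essentially the same approach as the paper: both reduce to the case where the theory in question is \gener\ and then apply Corollary~\ref{useful2} to obtain a false $\lambda$ with $\certt{\lambda}^\tau$ finitely axiomatized, above $W$, but not above $V$; the paper dismisses the reduction step as ``clearly sufficient'' while you carry out the transfer between $W$ and $W'$ explicitly. One small misstatement in your commentary: the globalizer hypothesis is not needed to pass from $\certt{\lambda}^\tau \NTR_{\sf loc} V$ to $\certt{\lambda}^\tau \NTR V$ (that direction holds unconditionally, since $\rhd$ implies $\rhd_{\sf loc}$); rather, it is consumed inside the proof of Corollary~\ref{useful2}(ii) to strengthen the hypothesis $W' \NTR V$ to $W' \NTR_{\sf loc} V$ so that part (i) of the corollary applies.
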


\begin{proof}
We first prove (i).
It is clearly sufficient to prove the result for the case that $W$ is a $\tau$-\gener\ theory, for some $\tau$. 
    Suppose all finitely axiomatized theories that interpret $W$ locally interpret $T$. We want to show that $W\rhd_{\sf loc} T$.
    Suppose, in order to obtain a contradiction, that $W \NTR_{\sf loc} T$. Let $\lambda$ be the sentence provided by Corollary~\ref{useful2}(i) 
    such that $\certt\lambda^\tau\NTR_{\sf loc} T$.
    Since $\lambda$ is false, we have $\certt\lambda^\tau \rhd W$ and, hence, $\certt\lambda^\tau\rhd_{\sf loc} T$. A contradiction.

    We turn to (ii). Again is sufficient to prove the result for the case that $W$ is a $\tau$-\gener\ theory, for some $\tau$. 
    Suppose all finitely axiomatized theories that interpret $W$  interpret $T$. We want to show that $W\rhd T$.
    Suppose, in order to obtain a contradiction, that $W \NTR T$. Let $\lambda$ be the sentence provided by Corollary~\ref{useful2}(ii) such that $\certt\lambda^\tau\NTR T$.
    Since $\lambda$ is false, we have $\certt\lambda^\tau \rhd W$ and, hence, $\certt\lambda^\tau\rhd T$. A contradiction.
\end{proof}

\begin{cor}
Suppose $W$ is mutually interpretable with a c.e.~sequential globalizer. Then, $W$ is the interpretability infimum of all 
finitely axiomatized theories above it \textup(w.r.t.~$\lhd$\textup).
\end{cor}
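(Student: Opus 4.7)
The plan is to reduce the corollary directly to Theorem \ref{smurferella}(ii), which already yields the infimum conclusion as soon as $W$ is mutually interpretable with a \gener\ globalizer. The given hypothesis supplies only a c.e.~sequential globalizer $T$ with $T \equiv W$, so the work is to replace $T$ by a \gener\ theory in its interpretability degree.

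A preliminary observation is that being a globalizer is a degree invariant: if $T \equiv T'$, then for any theory $X$ one has $T \rhd X \Leftrightarrow T' \rhd X$ (by composing interpretations) and correspondingly $T \rhd_{\mathsf{loc}} X \Leftrightarrow T' \rhd_{\mathsf{loc}} X$, so the defining implication of ``globalizer'' transfers verbatim. Hence it suffices to exhibit a \gener\ theory $T^\flat$ with $T^\flat \equiv T$; such a $T^\flat$ will automatically inherit the globalizer property, producing a \gener\ globalizer in the degree of $W$, and Theorem \ref{smurferella}(ii) then applies.

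To build $T^\flat$ I would mimic the $\mho_{\mathsf{PA}}$ construction from Section \ref{hoempapasmurf}: fix $\mathsf{S}^1_2$ as base and adjoin, for each $n$, a bounded-complexity consistency (or reflection) statement for the first $n$ axioms of a fixed c.e.~enumeration of $T$. The resulting theory is restricted, sequential and c.e., hence \gener\ by Corollary \ref{smartsmurf}. The direction $T \rhd T^\flat$ is routine, since $T$, being sequential, internally verifies each of those bounded consistencies via partial truth predicates for its own finite fragments.

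The main obstacle is the converse $T^\flat \rhd T$: arithmetized completeness naturally gives only the local form $T^\flat \rhd_{\mathsf{loc}} T$, and promoting this to a single global interpretation is the delicate step. One expected route is to glue the $n$-indexed local interpretations into a uniform one using the availability of the consistency axioms inside $T^\flat$; an alternative is to argue that $T^\flat$ is itself a globalizer, so that $T^\flat \rhd_{\mathsf{loc}} T$ upgrades directly to $T^\flat \rhd T$. Either way, once $T \equiv T^\flat$ is in place, Theorem \ref{smurferella}(ii) delivers the corollary.
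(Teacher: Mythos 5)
Your proposal follows essentially the same route as the paper: pass from the sequential globalizer $T$ to the restricted sequential theory $\mho_T$, note it is \gener\ by Corollary~\ref{smartsmurf}, observe the globalizer property transfers along mutual interpretability, and apply Theorem~\ref{smurferella}(ii). The step you flag as delicate, namely $\mho_T \rhd T$ and hence $T$ mutually interpretable with $\mho_T$, is precisely what the paper disposes of by citing \cite{viss:inte18}.
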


\begin{proof}
Any sequential globalizer $U$ is mutually interpretable with a restricted sequential theory, to wit $\mho_U$, which is, of course, itself a globalizer.
See \cite{viss:inte18}. By Corollary~\ref{smartsmurf}, the theory $\mho_U$ is a \gener. We now apply Theorem~\ref{smurferella}.
\end{proof}

So, e.g., {\sf PA} is the infimum in the degrees of local interpretability of c.e.~theories of the finitely axiomatized theories that locally
interpret it.

\section{Conclusions}

We  presented the following two new methods.
\begin{itemize}
    \item \emph{Certification of $\Sigma_1$-witnesses:} We introduced the notion of the certification of an element (Definition \ref{certification}), and explored some consequences of the certification. The
    Certified Extension Theorem on $\RR_0$ (Theorem \ref{CSS}) is one of the main results of this study. 

    \item \emph{\gener\ theories:} We developed a generalization of the argument concerning $\RR_0$ to \gener\ theories, 
    which allows the Generalized Certified Extension Theorem (Theorem \ref{CSS1}) to be applied, 
    for example, to Vaught's set theory $\mathsf{VS}$ (Theorem \ref{VS}).
\end{itemize}

Our two methods have the following two applications: 

\begin{itemize}
    \item Certified $\Sigma_1$-sentences can be successfully applied to provide proofs of Vaught's two theorems. 
    Furthermore, we proved the strong effective $\cothe T$-inseparability of \gener \, c.e.~theories, which yields Vaught's two theorems (Theorem \ref{gen_Vaught3}). 

    \item Certified $\Sigma_1$-sentences can also be applied to the study of the degrees of interpretability of theories. 
    We proved some density results (Corollaries \ref{density1} and \ref{density2}) and studied sufficient conditions for a theory to be the supremum of the theories below it or the infimum of the theories above it (Theorems \ref{SFF} and \ref{smurferella}). 
\end{itemize}

In our paper \cite{KV24}, we specifically discussed topics related to the first application. 
This paper may be read in connection with the present paper. 

\section*{Acknowledgments}

This work was partly supported by JSPS KAKENHI Grant Numbers JP19K14586 and JP23K03200. We thank the referee for his/her 
valuable suggestions.

\bibliographystyle{alpha}
\bibliography{reference}

\end{document}